\pgfplotsset{compat=1.13}
\renewcommand{\sslash}{\mathbin{/\mkern-6mu/}}
\newcommand{\C}{\mathbb{C}}
\newcommand{\Z}{\mathbb{Z}}
\newcommand{\G}{\mathbb{G}}
\newcommand{\OO}{\mathcal{O}}
\newcommand{\PP}{\mathbb{P}}
\newcommand{\A}{\mathbb{A}}
\newcommand{\LL}{\mathcal{L}}
\newcommand{\Gr}{\mathrm{Gr}}
\renewcommand{\L}{\mathcal{L}}
\DeclareMathOperator{\codim}{codim}
\DeclareMathOperator{\spec}{Spec}
\DeclareMathOperator{\Lie}{Lie}
\DeclareMathOperator{\stab}{Stab}
\newtheorem{theorem}{Theorem}[section]
\newtheorem{lemma}[theorem]{Lemma}
\newtheorem{prop}[theorem]{Proposition}
\newtheorem{cor}[theorem]{Corollary}
\newtheorem{defn}[theorem]{Definition}
\theoremstyle{definition}
\newtheorem{construction}[theorem]{Construction}
\numberwithin{equation}{section}
\theoremstyle{remark}
\newtheorem*{remark}{Remark}
\title[Moduli Spaces of Hyperplanar Admissible Flags in Projective Space]{Moduli Spaces of Hyperplanar Admissible Flags in Projective Space}
\author{George Cooper}
\address{Centro di Ricerca Ennio De Giorgi, Collegio Puteano, Scuola Normale Superiore, Piazza dei Cavalieri, 3, I-56100 PISA}
\email{george.cooper@sns.it}
\begin{document}
	
\begin{abstract}
	We prove the existence of quasi-projective coarse moduli spaces parametrising certain complete flags of subschemes of a fixed projective space $\mathbb{P}(V)$ up to projective automorphisms. The flags of subschemes being parametrised are obtained by intersecting non-degenerate subvarieties of $\mathbb{P}(V)$ of dimension $n$ by flags of linear subspaces of $\mathbb{P}(V)$ of length $n$, with each positive dimension component of the flags being required to be non-singular and non-degenerate, and with the dimension $0$ components being required to satisfy a Chow stability condition. These moduli spaces are constructed using non-reductive Geometric Invariant Theory for actions of groups whose unipotent radical is graded, making use of a non-reductive analogue of quotienting-in-stages developed by Hoskins and Jackson.
\end{abstract}	

\maketitle

\setcounter{tocdepth}{1}
\tableofcontents

\section{Introduction}

In this paper we construct using non-reductive GIT quasi-projective coarse moduli spaces which parametrise certain classes of varying flags of subschemes $X^0 \subset X^1 \subset \cdots \subset X^n$ of a fixed projective space $\PP(V)$ up to the action of the group $PGL(V)$ of projective automorphisms.

The problem of moduli has long played an important role in Algebraic Geometry, and a significant portion of current research is dedicated to the study of moduli spaces of projective schemes and their descendents. For the past several decades, the main approach to constructing scheme-theoretic moduli spaces has been to use Mumford's (reductive) Geometric Invariant Theory (GIT), as developed in \cite{git}. Classical examples of GIT moduli spaces of projective schemes include non-singular hypersurfaces in projective space \cite{mfk}, stable curves \cite{gieseker} \cite{mumfordstability}, canonically polarised surfaces of general type \cite{giesekersurfaces} and canonically polarised varieties with canonical singularities of any dimension \cite{viehweg}. The first three of these constructions involve using the Hilbert--Mumford criterion to establish GIT stability. However, when working with higher dimensional schemes other approaches to constructing moduli spaces, involving GIT or otherwise, are often necessary; amongst other issues, it is not known how to directly implement the Hilbert--Mumford criterion to test for the GIT stability of subschemes $X$ of projective space with $\dim X \geq 3$, even when $X$ is smooth (Viehweg's construction relies on establishing deep positivity results to directly produce invariant sections of the linearisations on the Hilbert scheme he considers).

Related to the problem of moduli of schemes is the moduli of \emph{flags} of schemes, such as pairs of schemes with a divisor. The main result of the paper is that \emph{non-reductive} GIT yields constructions of examples of non-empty quasi-projective coarse moduli spaces of flags of schemes, namely when the schemes are subschemes of a projective space $\PP(V)$ and the flag $\underline{X}: X^0 \subset X^1 \subset \cdots \subset X^n$ is obtained by intersecting an $n$-dimensional variety $X^n \subset \PP(V)$ with a flag of linear subspaces $\underline{Z}: Z^0 \subset \cdots \subset Z^n = \PP(V)$ with $Z^i \subset \PP(V)$, of codimension $n-i$; here the flags $\underline{X}$ and $\underline{Z}$ are allowed to vary in $\PP(V)$ and $V$ respectively.

\subsection*{Summary of Results}

The objects of interest to us are known as \emph{(complete) hyperplanar admissible flags of subschemes of $\PP(V)$},\footnote{The terminology here is inspired by that of \cite{lazarsfeldmustata}. The hyperplanar admissible flags of interest to us are admissible flags in the sense of Lazarsfeld--Mustaţă, with the exception that we allow multiple points in dimension $0$, not just one.} which are flags $X^0 \subset X^1 \subset \cdots \subset X^n$ where $X^n$ is an $n$-dimensional subscheme of $\PP(V)$, and where each $X^i$ is an $i$-dimensional subscheme obtained by intersecting $X^n$ with a linear subspace $\PP(Z^i) \subset \PP(V)$ of codimension $i$, with $Z^0 \subset Z^1 \subset \cdots \subset Z^n = V$. The \emph{degree} of this flag is given by the common degree of the subschemes $X^i \subset \PP(V)$, and the \emph{Hilbert type} is given by the Hilbert polynomials of the $X^i$. Such a flag is said to be \emph{non-degenerate} if each $X^i \subset \PP(Z^i)$ is non-degenerate (i.e. not contained in a hyperplane in $\PP(Z^i)$), non-singular if $X^0$ is a disjoint union of reduced points and if the remaining $X^i$ are smooth connected varieties, and \emph{stable} if $X^0$ is a Chow stable length $0$ subscheme of $\PP(Z^0)$ (for more details, see Section \ref{section: the moduli functor}). Given any non-singular non-degenerate subvariety $X^n \subset \PP(V)$ then, provided the degree of $X^n$ is sufficiently large, the intersection of $X^n$ with a generic choice of a flag of linear subspaces $\PP(Z^0) \subset \cdots \subset \PP(Z^{n}) = \PP(V)$ of the appropriate dimensions is a non-degenerate, non-singular and stable hyperplanar admissible flag (see the discussion following Definition \ref{defn: complete flag}).

Our main result is that there are quasi-projective coarse moduli spaces parametrising non-degenerate, non-singular and stable hyperplanar admissible flags of subschemes of $\PP(V)$:

\begin{theorem} \label{theorem: main theorem}
	Let $n, d$ be positive integers, and let $V$ be a finite dimensional complex vector space. Assume $n + 1 < \dim V$ and $d > \dim V - n$, with $d \not \in \left\{ \frac{\dim V - n - 1 + i}{n + 1 - i} : i = 1, \dots, n \right\}$. Let $\underline{\Phi} = (\Phi_0, \dots, \Phi_n)$ be a tuple of Hilbert polynomials of subschemes of $\PP(V)$, where $\Phi_i(t) = \frac{d}{i!} t^{i} + O(t^{i-1})$ for each $i$. Let $\mathcal{F}_{n,d,\underline{\Phi}}^{\PP(V)}$ be the moduli functor parametrising equivalence classes of families of non-degenerate, non-singular and stable complete hyperplanar admissible flags of subschemes of $\PP(V)$ of length $n$, degree $d$ and Hilbert type $\underline{\Phi}$ (cf. Definition \ref{defn: equivalence of families}).
	
	Then there exists a quasi-projective coarse moduli space $M_{n,d,\underline{\Phi}}^{\PP(V)}$ for the moduli functor $\mathcal{F}_{n,d,\underline{\Phi}}^{\PP(V)}$.
\end{theorem}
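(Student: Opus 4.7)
The plan is to realise $M_{n,d,\underline\Phi}^{\PP(V)}$ as a non-reductive GIT quotient by the parabolic subgroup of $GL(V)$ stabilising a reference flag. Fix a linear flag $Z_0^\bullet$ in $V$ of the prescribed dimensions and let $P \leq GL(V)$ be its stabiliser, so $P = L \ltimes U$ with Levi $L$ and unipotent radical $U$ graded by a central cocharacter $\lambda$ of $L$. As parameter space I would take the locally closed subscheme $\mathcal{H} \subset \mathrm{Hilb}_{\PP(V)}^{\Phi_n}$ consisting of those $X^n$ for which the iterated intersections $X^i := X^n \cap \PP(Z_0^i)$ have Hilbert polynomials $\Phi_i$, are non-degenerate and non-singular for $i \geq 1$, and whose zero-dimensional piece $X^0$ is Chow stable in $\PP(Z_0^0)$. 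These are all locally closed or open conditions, so $\mathcal{H}$ is quasi-projective and carries a natural $P$-action. Since $GL(V)/P$ is the flag variety of flags of the given type, $P$-orbits on $\mathcal{H}$ are in bijection with $PGL(V)$-orbits of hyperplanar admissible flags of the specified discrete invariants, which is precisely the equivalence relation we wish to quotient by.

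Next, I would linearise the $P$-action by embedding $\mathrm{Hilb}_{\PP(V)}^{\Phi_n}$ into a projective space via a sufficiently ample twist (for example via Pl\"ucker applied to a Gieseker-type embedding), and apply the Hoskins-Jackson quotienting-in-stages: first form the non-reductive GIT quotient $\mathcal{H} \sslash U$ by the graded unipotent radical, which is quasi-projective, and then take the reductive GIT quotient of the result by the residual reductive group $L/\langle \lambda \rangle$, obtaining a quasi-projective scheme $M := \mathcal{H} \sslash P$. The crux is that the numerical hypotheses should provide a well-adapted linearisation: the inequalities $n+1 > \dim V$ and $d > \dim V - n$ are designed to force the $\lambda$-weights on the linearisation to have a strictly negative minimum separated by a prescribed gap from the positive weights (the Berczi-Doran-Hawes-Kirwan well-adaptedness condition), while the excluded rational values of $d$ are precisely those walls at which distinct $\lambda$-weights collide and semistability would fail to coincide with stability for the $U$-quotient. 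With well-adaptedness established, $P$-stability on $\mathcal{H}$ decomposes into (i) triviality of unipotent stabilisers, which should follow from non-degeneracy of the flag, and (ii) reductive GIT-stability of $X^0$ under $GL(Z_0^0) \leq L$, which is exactly the Chow stability imposed in the definition of $\mathcal{H}$.

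The coarse moduli property then follows from the universal property of the NRGIT quotient as a categorical quotient of $\mathcal{H}$: any family $\mathcal{X} \to S$ in $\mathcal{F}_{n,d,\underline\Phi}^{\PP(V)}$ induces a classifying morphism $S \to \mathcal{H}/P = M$ after a local reduction of its flag of linear subspaces to the reference flag $Z_0^\bullet$, and these local classifying maps glue since $P$-orbits are exactly the equivalence classes of flags. The bijection on geometric points is built into the orbit-space property of stable NRGIT quotients. The step I expect to be hardest is verifying that the whole of $\mathcal{H}$ lies in the $P$-stable locus for the chosen linearisation: this requires a Hilbert-Mumford-type analysis for the parabolic $P$, split into a classical reductive computation for the Chow stability of $X^0$ and a graded-unipotent analysis for the interaction of $U$ with the higher-dimensional pieces $X^1, \ldots, X^n$ of the flag. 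Tracking the precise weight inequalities that make this analysis succeed is what should ultimately produce the numerical constraints on $d$ and $n$ appearing in the statement of the theorem.
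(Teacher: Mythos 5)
Your overall strategy matches the paper's in outline (reduce to the action of the parabolic $P$ stabilising a reference linear flag, then apply graded-unipotent NRGIT plus reductive GIT for the residual Levi, with coarse-moduli-ness coming from the local universal property and the orbit-space property of the quotient), but there are two substantive divergences, one of which is a genuine gap. First, you quotient by the whole of $U$ in a single $\hat{U}$-theorem step and then by $L/\lambda(\G_m)$; the paper deliberately does \emph{not} do this, and instead quotients in $n$ stages by the groups $\hat{H}_i$ arising from a filtration of $U$, so that every grading one-parameter subgroup encountered has only \emph{two} distinct weights and the only reductive stability ever needed is Chow stability of the zero-dimensional piece $X^0 \subset \PP(Z^0)$. (What you describe is not the Hoskins--Jackson quotienting-in-stages you cite; theirs is precisely the staged filtration of $U$.) Your one-shot route is not wrong in principle, but it makes the verification of the stability conditions far harder, and you give no indication of how you would carry it out. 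Second, and more seriously, the crux you correctly identify --- showing the parameter space lies in the $P$-stable locus --- is left without its key idea. In the paper this rests on computing the flat limits of a flag under the two-weight 1PSs $\lambda^{[i]}$: these limits are the joins $J(X^{i-1}, \PP(V^{ij}))$ of the lower-dimensional pieces with linear subspaces (Lemma \ref{lemma: flat limits are joins}), whose Chow weights are explicitly computable and \emph{constant} over the parameter space (Lemmas \ref{lemma: chow weights of joins} and \ref{lemma: weights of limits}, forcing minimality of the weight), whose unipotent stabilisers are trivial by a projective-tangent-space argument (Lemma \ref{lemma: stabilisers of limits}), and which are singular, hence distinguish points of $\mathcal{S}_0'$ from the sweep $U Z_{\min}$ that must be excised from the stable locus. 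None of this appears in your sketch, and "triviality of unipotent stabilisers follows from non-degeneracy of the flag" is not the relevant statement: it is the stabiliser of the \emph{limit point} $p(y)$, a join, that must be trivial.

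Two smaller points. Your parameter space records only $X^n$ inside a single Hilbert scheme; the paper works in $\prod_i \mathcal{H}_i \times \prod_j \Gr_j$ and, crucially, linearises via the product of Chow linearisations $\LL_{\mathrm{Ch}}^{\underline{a}}$ with $\underline{a} = (a_0, 1, \dots, 1)$ and $a_0 \gg 0$ (Lemma \ref{lemma: schmitt product stability}), so that Chow stability of $X^0$ alone forces stability of the whole tuple for the Levi factor $SL(W)$. A linearisation coming only from the Hilbert scheme of $X^n$ would not obviously support the weight analysis. Finally, your reading of the numerical hypotheses is partly off: adaptedness is always arranged by twisting by characters $\epsilon_i \chi_i$ and is not what the inequalities encode; $d > \dim V - n$ is needed for Chow stable configurations of $d$ points in $\PP(Z^0)$ to exist at all, and the excluded values of $d$ are exactly those for which $\beta_{>i} + d\beta_{\leq i} = 0$, i.e. where the Chow weight of a join in the balanced-dimension case of Lemma \ref{lemma: chow weights of joins} would degenerate.
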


In Section \ref{section: extending the main result}, we briefly indicate how Theorem \ref{theorem: main theorem} can be extended in a couple of ways, including incorporating weightings to the points of $X^0$, or by instead considering non-degenerate and non-singular hyperplanar admissible flags of the form $X^k \subset \cdots \subset X^n$, where in addition $X^k \subset \PP(Z^k)$ is required to be GIT stable. 

\subsection*{Summary of the Proof of the Main Result}

The proof of Theorem \ref{theorem: main theorem} proceeds as follows. In order to have a notion of a family of hyperplanar admissible flags, as part of the objects of the moduli problem we record the flag of linear subspaces $\PP(Z^0) \subset \cdots \subset \PP(Z^n)$, before imposing the equivalence relation of two flags being projectively equivalent. As such, we consider the diagonal action of $SL(V)$ on the product
$$ \mathcal{S} := \prod_{i=0}^n \mathrm{Hilb}(\PP(V), \Phi_i) \times \prod_{j=0}^n \mathrm{Gr}_j(V), $$
where $\mathrm{Gr}_j(V)$ is the Grassmannian of codimension $n-j$ linear subspaces of $V$ (see Section \ref{section: local universal property}). There exists an $SL(V)$-invariant locally closed subscheme $\mathcal{S}' \subset \mathcal{S}$ whose closed points correspond to non-degenerate, non-singular and stable hyperplanar admissible flags of subschemes of $\PP(V)$ of length $n$, together with the data of the closed embeddings $X^i \subset \PP(V)$ and the subspaces $Z^i \subset V$. The moduli space $M_{n,d,\underline{\Phi}}^{\PP(V)}$ is given by the geometric quotient of $\mathcal{S}'$ by $SL(V)$. 

A priori this is a problem in reductive GIT. However, since each of the subvarieties $X^i \subset \PP(V)$ for $i < n$ are degenerate, the Hilbert points $[X^i \subset \PP(V)] \in \mathrm{Hilb}(\PP(V), \Phi_i)$ are unstable. The subvariety $X^n \subset \PP(V)$ is non-degenerate; however, beyond the case of curves of degree $d$ and genus $g$ in $\PP^{d-g}$ \cite{bfmv}, in higher dimensions there is no complete algebraic classification of when $X^n \subset \PP(V)$ has a stable or unstable Hilbert point with respect to either the Hilbert or Chow linearisations on $\mathrm{Hilb}(\PP(V), \Phi_n)$. Thirdly, the flag of subspaces $Z^0 \subset \cdots \subset Z^{n-1} \subset Z^n = V$ defines an $SL(V)$-unstable point of $\prod_{j=0}^n \mathrm{Gr}_j(V)$; if $L_{\mathrm{Gr}_i}$ is the ample generator of $\mathrm{Pic}(\mathrm{Gr}_i(V)) \cong \Z$ (for $i < n$), then for any choice of weights $w_0, \dots, w_{n-1} \in \Z^{>0}$ the flag of subspaces is GIT unstable for the diagonal $SL(V)$-action on $\prod_{j=0}^{n-1} \mathrm{Gr}_j(V)$ with respect to the ample linearisation $\boxtimes_{j=0}^{n-1} L_{\mathrm{Gr}_j}^{w_j}$ (as seen by applying \cite[Theorem 11.1]{dolgachevbook} with $W = \PP(Z^{n-1})$).

To rectify these issues, we instead pass to the case of looking at the action of the parabolic subgroup $P \subset SL(V)$ preserving a \emph{fixed} flag of subspaces $Z^0 \subset Z^1 \subset \cdots \subset Z^n = V$ (where $\codim Z^i = n-i$), yielding a locally closed subscheme $\mathcal{S}_0' \subset \mathcal{S}'$ with $\mathcal{S}' \cong SL(V) \times^P \mathcal{S}_0'$, and use NRGIT to form a geometric quotient of $\mathcal{S}_0'$ by $P$; this quotient by $P$ coincides with the geometric quotient of $\mathcal{S}'$ by $SL(V)$. Rather than applying the \emph{$\hat{U}$-Theorem} to form this quotient in one go, we instead adopt the non-reductive GIT quotienting-in-stages procedure of Hoskins--Jackson \cite{hoskinsjackson}. We apply quotienting-in-stages directly, by verifying directly that at each stage the $\hat{U}$-Theorem can be applied, since this is more straightforward than determining whether the quotienting-in-stages assumptions of \emph{loc. cit.} hold.

There are two key features of our quotienting-in-stages approach. The first is that, since all grading 1PS involved have only two distinct weights, it is relatively straightforward to check that the necessary weight and stabiliser conditions needed to apply the $\hat{U}$-Theorem at each stage hold, at least for the objects of interest. The key observation is that the flat limit of $X^k \subset \PP(Z^k)$ under any of these grading 1PS is given by $X^k$ itself, or by the join $J$ of $X^j$ with a complementary linear subspace to $\PP(Z^j) \subset \PP(Z^k)$, where $j < k$ (cf. Lemmas \ref{lemma: flat limits are joins} and \ref{lemma: limits of nice configurations}). The geometry of these joins is what allows us to verify that the desired conditions on unipotent stabilisers hold. Since we elect to work with the Chow linearisations on the Hilbert schemes $\mathrm{Hilb}(\PP(V), \Phi_i)$, the Hilbert--Mumford weight of $J$ with respect to any of the grading 1PS can be explicitly computed (cf. Lemma \ref{lemma: chow weights of joins}); this allows us to verify the necessary minimality conditions for the weights of the grading 1PS (cf. Lemma \ref{lemma: weights of limits}).

Secondly, since we are working with complete flags $X^0 \subset \cdots \subset X^n$, we only ever have to consider the (reductive) GIT stability of subschemes of $\PP(V)$ of dimension $0$, which is classically understood.

\begin{remark}
	If one imposes the additional constraint that the top-dimensional subvariety $X^n \subset \PP(V)$ is GIT stable (for either the Chow or Hilbert linearisations), the construction of the coarse moduli space of hyperplanar flags $X^0 \subset \cdots \subset X^n \subset \PP(V)$ can be carried out using reductive GIT, by making use of \ref{lemma: schmitt product stability} to ensure that a point of $\mathcal{S}$ is $SL(V)$-stable if $[X^n \subset \PP(V)] \in \mathrm{Hilb}(\PP(V), \Phi_n)$ is GIT stable. If $X^n \subset \PP(V)$ is a smooth hypersurface of degree $d > 2$, it is classically known that the corresponding point of $\PP(H^0(\PP(V), \OO_{\PP(V)}(d)))$ is GIT stable. Another plentiful source of GIT stable subvarieties arises from a result of Donaldson \cite{donaldson}, which states that if $(X,L)$ is a smooth polarised manifold with discrete automorphism group for which there exist constant scalar curvature metrics in $c_1(L)$, then $(X,L)$ is asymptotically Chow stable (i.e. $X \subset \PP(H^0(X,L^k)^{\vee})$ is Chow stable for all $k \gg 0$). By work of Yau \cite{yau} (and independently in the canonically polarised case by Aubin \cite{aubin}), this includes canonically polarised smooth varieties and arbitrarily-polarised smooth Calabi--Yau varieties. More generally, Viehweg \cite{viehweg} proves that any canonically polarised variety with canonical singularities is asymptotically stable (with respect to linearisations arising as variants of the Hodge bundle), extending prior results for curves \cite{mfk} \cite{mumfordstability} and surfaces \cite{giesekersurfaces}. Other examples of Chow stable subvarieties of projective space arise from work by Morrison \cite{morrisonsurfaces} and Miranda \cite{miranda1981moduli}.
\end{remark}

\subsection*{Relation to Previous Work}

As mentioned above, the results of this paper rely heavily on Hoskins--Jackson's non-reductive quotienting-in-stages construction \cite{hoskinsjackson}. This work was also motivated in part by work of Ross \cite{rossthesis} and Ross--Thomas \cite{rossthomas} on the study of the Hilbert--Mumford criterion for polarised projective varieties. 1PS with two distinct weights feature prominently in these latter pieces of work, and are used to define notions of slope stability for polarised varieties, in analogy with Gieseker stability for coherent sheaves on a projective scheme (in turn, Gieseker stability is governed by 1PS with two distinct weights, as is explained in \cite[Section 3.2]{rossthesis}). Ross and Ross--Thomas prove that K-stable polarised varieties are slope stable and that asymptotically Chow stable polarised varieties are Chow-slope stable, and use slope stability to give a geometric proof of the asymptotic Chow/K-stability of smooth curves.

The relationship between \cite{rossthesis} \cite{rossthomas} and this paper is as follows. If $X \subset \PP(V)$ is a non-degenerate subvariety and $\lambda$ a 1PS of $GL(V)$ with two distinct weights (for instance, the lift $\lambda^{[i]}$ of the grading 1PS $\lambda_i$ considered in non-reductive quotienting-in-stages), the action of $\lambda$ on $X \subset \PP(V)$ defines a test configuration $(\mathcal{X}, \mathcal{L} = \OO_{\PP(V) \times \A^1}(1)|_{\mathcal{X}})$ of $(X, \OO_{\PP(V)}(1)|_X)$ whose central fibre is given by the flat limit $X_0 \subset \PP(V)$ of $X$ under $\lambda$. As explained in \cite[Page 8]{rossthomas} as well as in \cite[Section 3 of the proof of Theorem 2.9]{mumfordstability}, up to rescaling the $\lambda(\G_m)$-action the test configuration $(\mathcal{X}, \mathcal{L})$ is equivariantly dominated by a test configuration with total space $\mathrm{Bl}_{\mathcal{I}}(X \times \A^1)$, where $\mathcal{I} \subset \OO_{X}[t]$ is an ideal of the form $\mathcal{I} = \mathcal{I}_Z + (t)$, with $Z \subset X$ given by the repulsive fixed-point locus for $\lambda$. The blowup $\mathrm{Bl}_{\mathcal{I}}(X \times \A^1)$ coincides with the total space for the deformation to the normal cone of $Z \subset X$. The notion of slope stability (resp. Chow-slope stability) of a polarised variety $(X', L')$ arises from K-stability (resp. Chow stability) restricted to test configurations arising from deformations to normal cones of subschemes $Z' \subset X'$; when $L'$ is very ample and has no higher cohomology, so that $X' \subset \PP(H^0(X', L')^{\vee})$, by \cite[Proposition 3.7]{rossthomas} the test configuration of $(X', L')$ given by the deformation to the normal cone of $Z' \subset X'$ determines a 1PS of $GL(H^0(X', L'))$ with two distinct weights. 

In favourable circumstances (cf. proof of Lemma \ref{lemma: flat limits are joins}), the geometry of the deformation to the normal cone can be used to fully determine the flat limit $X_0$, in terms of the join of the repulsive locus $Z$ with a complementary linear subspace. In turn, the simple form taken by the homogeneous ideal of such a join allows for the $\lambda(\G_m)$-weight of $H^0(X_0, \OO_{X_0}(k))$ (with $k \gg 0$) to be easily computed in terms of the degree and dimension of $Z$ (cf. proof of Lemma \ref{lemma: chow weights of joins}), with the computations being more straightforward than those of Section 4 of \emph{loc. cit.} (which apply for the deformation to the normal cone in general).

\subsection*{Motivation}

This work was motivated by an attempt to construct, using GIT, a non-empty coarse moduli space of stable maps to a varying target subvariety $X$, with $X$ allowed to vary inside a fixed projective space $\PP(V)$, without any further conditions on the invertible sheaf $\OO_X(1) = \OO_{\PP(V)}(1) |_X$ beyond its amplitude, and with the image curve in $X$ corresponding to the intersection of $X$ with a linear subspace $L$ of $\PP(V)$ of the appropriate codimension (chosen so that, at least if $X$ is smooth and non-degenerate, the intersection of $X$ with a generic such $L$ is a smooth curve). This goal in turn arose as part of the more general goal of constructing moduli spaces of varieties together with morphisms from curves, as part of a wider aim of seeking new, GIT based approaches to constructing and studying moduli spaces of projective varieties by introducing suitable decorations.

The attempted construction involved appending to the Baldwin--Swinarski \cite{baldwinswinarski} GIT construction of the coarse moduli space of stable maps $\overline{M}_{g,n}(\PP(V), \beta)$ an additional Hilbert scheme factor, parametrising the subscheme $X \subset \PP(V)$, followed by attempting to form a geometric GIT quotient for the diagonal action of $SL(V)$ on the enlarged parameter space. The hope was that the Baldwin--Swinarski construction could be leveraged to show that the desired moduli space existed, without having to resort to implementing (in full) the Hilbert--Mumford criterion for the $SL(V)$-action on $\mathrm{Hilb}(\PP(V))$. The existence of this desired moduli space is closely related to the existence of a coarse moduli space parametrising partial flags $X^1 \subset X^n$ of $\PP(V)$. In the case $n=2$ a coarse moduli space can be constructed provided $X^1 \subset \PP(Z^1)$ is assumed to be a non-degenerate curve of genus $g$ and degree $d > 2(2g-2)$ (see Section \ref{section: omitting points}). However, it can be shown that when $n \geq 3$, in the resulting NRGIT setup the very stable locus (cf. Section \ref{section: NRGIT}) for the parabolic group action of interest is empty, and so Theorem \ref{theorem: U-hat in our setting} is no-longer applicable.

\subsection*{Notation and Conventions}

We adopt the following notation and conventions:

\begin{itemize}
	\item Throughout this paper we work over the field $\C$ of complex numbers. The term \emph{point} always refers to $\C$-points.
	\item A variety is an integral separated scheme of finite type.
	\item The term \emph{coarse moduli space} is to be understood in terms of $\mathbf{Set}$-valued moduli functors, as opposed to algebraic stacks (cf. Definition \ref{defn: cms}).
	\item If $\mathcal{E}$ is a locally free $\OO_T$-module of finite rank, we denote $\PP_T(\mathcal{E}) := \mathbf{Proj}_T(\mathrm{Sym}^{\bullet}(\mathcal{E}^{\vee}))$. Given a set of points $S$ of a projective space $\PP(V)$, their linear span is denoted $\langle S \rangle$. If $X \subset \PP(V)$ is a closed subscheme of $\PP(V)$ with ideal sheaf $\mathcal{I}_X$, we define $\mathbb{I}(X) := \bigoplus_{d \geq 0} H^0(\PP(V), \mathcal{I}_X(d))$ (we also use the notation $\mathbb{I}_{\PP(V)}(X)$ if we wish to emphasise the projective space $\PP(V)$ containing $X$). $\PP T_p X$ denotes the projective tangent space to $X \subset \PP(V)$ at a point $p \in X$.
	\item A closed subscheme $X$ of $\PP(V)$ is said to be \emph{non-degenerate} if $X$ is not contained in any hyperplane in $\PP(V)$, in other words if $H^0(\PP(V), \mathcal{I}_X(1)) = 0$, and is said to be \emph{degenerate} otherwise.
	\item If $G$ is an algebraic group acting on a quasi-projective scheme $Y$ and if $H \subset G$ is a subgroup, $G \times^H Y$ denotes the Borel construction, the geometric quotient of $H$ acting on $G \times Y$ by $h \cdot (g,y) = (gh^{-1}, hy)$.\footnote{By \cite[Theorem 4.19]{popovvinberg} $G \times^H Y$ is a scheme.}
	\item We use the following sign convention regarding the Hilbert--Mumford criterion in reductive GIT. Let $X$ be a projective scheme acted on by a reductive group $G$, let $\L$ be an ample linearisation for this action, let $\lambda$ be a 1PS (one-parameter subgroup) of $G$ and let $x \in X$, with limit $x_0 = \lim_{t \to 0} \lambda(t) \cdot x$. The \emph{Hilbert--Mumford weight} $\mu^{\L}(x, \lambda)$ is the weight of the $\lambda(\G_m)$-action on the fibre $\L^{\vee}|_{x_0}$; $x$ is (semi)stable if $\mu^{\L}(x, \lambda) > (\geq) \ 0$ for all non-trivial 1PS $\lambda$ of $G$.
\end{itemize}

\subsection*{Acknowledgements}

The contents of this paper constitutes a chapter of the author's DPhil thesis at the University of Oxford. The author would like to thank his supervisors Frances Kirwan, Jason Lotay and Alexander Ritter for their continuing guidance and encouragement during this project. The author also wishes to thank his thesis examiners Ruadhai Dervan and Dominic Joyce for their corrections and comments on this work. The author would also like to thank Eloise Hamilton, Victoria Hoskins and Josh Jackson for several helpful conversations and suggestions.

The author was supported by a UK Research and Innovation (Engineering and Physical Sciences Research Council) Doctoral Scholarship (EP/V520202/1).

\section{Preliminaries} \label{section: preliminaries}

\subsection{Lemmas Concerning Geometric Quotients}

Here we give a few elementary results needed in this paper concerning geometric quotients.

\begin{lemma} \label{lemma: restricting geometric quotients reductive}
	Let $q : X \to X/G$ be a geometric quotient for the action of a reductive algebraic group $G$ on a scheme $X$ of finite type. Let $Z \subset X$ be a $G$-invariant locally closed subscheme of $X$. Then $q$ restricts to give a geometric $H$-quotient $q|_{Z} : Z \to q(Z) \subset X/G$, where $q(Z)$ is a locally closed subscheme of $X/G$.
\end{lemma}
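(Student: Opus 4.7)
The plan is to reduce to the case where $Z$ is closed in $X$ and then to verify the geometric quotient properties locally on an affine cover of $X/G$, making essential use of the exactness of the functor of $G$-invariants for reductive $G$.

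First I would argue the reduction to the closed case. Since $Z$ is locally closed and $G$-invariant, its Zariski closure $\overline{Z}$ is $G$-invariant, and $\overline{Z} \setminus Z$ is a $G$-invariant closed subset of $X$. The open complement $W := X \setminus (\overline{Z} \setminus Z)$ is a $G$-invariant open subscheme of $X$ inside which $Z$ sits as a closed subscheme. Because $W$ is $G$-invariant the identity $q^{-1}(q(W)) = W$ holds, so submersiveness of $q$ forces $q(W)$ to be open in $X/G$, and a direct check against the definition shows that $q|_W : W \to q(W)$ is again a geometric $G$-quotient. Granted the closed case applied to this restricted quotient, $q(Z)$ will be closed in $q(W)$ and therefore locally closed in $X/G$.

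Assuming now that $Z$ is $G$-invariant and closed in $X$, I would define the scheme structure on $q(Z)$ via the ideal sheaf $\J := (q_* \I_Z)^G \subset (q_* \OO_X)^G = \OO_{X/G}$; that this is a sheaf of $\OO_{X/G}$-ideals follows from the $\OO_{X/G}$-linear Reynolds splitting of $(q_* \OO_X)^G \hookrightarrow q_* \OO_X$ available for reductive $G$. To identify $V(\J)$ with $q(Z)$ and verify the geometric quotient axioms, I would work on an affine cover of $X/G$ by opens $U = \spec A^G$ with $q^{-1}(U) = \spec A$. The closed subscheme $Z \cap q^{-1}(U)$ corresponds to a $G$-invariant ideal $I \subset A$, and the crucial algebraic input is that, for reductive $G$, applying invariants to the exact sequence $0 \to I \to A \to A/I \to 0$ remains exact, yielding $A^G/I^G \cong (A/I)^G$. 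This identifies $V(\J) \cap U = \spec(A^G/I^G)$ as a closed subscheme of $U$ whose underlying set is $q(Z) \cap U$, and simultaneously exhibits $q|_Z$ locally as $\spec(A/I) \to \spec((A/I)^G)$, which is plainly a geometric quotient, the fibres being precisely the $G$-orbits in $Z$ (since $Z$ is $G$-invariant these are orbits of the ambient $q$).

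The hardest point is the construction of the scheme structure on $q(Z)$ together with the structure sheaf identity $\OO_{q(Z)} \cong ((q|_Z)_* \OO_Z)^G$; both rely essentially on exactness of $G$-invariants and hence on reductivity. Submersiveness of $q|_Z$ needs a small additional argument: given $U' \subset q(Z)$ with $(q|_Z)^{-1}(U')$ open in $Z$, one writes $(q|_Z)^{-1}(U') = \tilde V \cap Z$ for some open $\tilde V \subset X$ and $G$-saturates to $V := G \cdot \tilde V$, which still satisfies $V \cap Z = q^{-1}(U')$; submersiveness of $q$ then gives $q(V)$ open in $X/G$ and $U' = q(V) \cap q(Z)$ open in $q(Z)$. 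Once the local affine picture is secured the remaining verifications are essentially formal.
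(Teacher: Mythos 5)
Your proof is correct and follows essentially the same route as the paper's: both reduce the locally closed case to a closed case plus a saturated-open restriction, and both rest on the identity $(A/I)^G \cong A^G/I^G$ furnished by reductivity. The only difference is cosmetic --- you excise $\overline{Z} \setminus Z$ from $X$ first and then treat $Z$ as closed in the resulting invariant open, whereas the paper quotients the closure $\overline{Z}$ first and then passes to the open subset $q(\overline{Z}) \setminus q(\overline{Z} \setminus Z)$ of $q(\overline{Z})$.
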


\begin{proof}
	Let $\overline{Z}$ be the closure of $Z$ in $X$; as $Z$ is locally closed in $X$ then $Z$ is open in $\overline{Z}$. Since $q$ is a good quotient then $q(\overline{Z})$ is closed in $X/G$; moreover $q(\overline{Z})$ inherits a canonical scheme structure from $\overline{Z}$ (if $\overline{Z} \subset X$ is locally $\spec A/I \subset \spec A$ then $q(\overline{Z}) \subset X/G$ is locally $\spec (A/I)^G = \spec A^G/I^G \subset \spec A^G$). The restriction $q|_{\overline{Z}} : \overline{Z} \to q(\overline{Z})$ is then a geometric $G$-quotient. 
	
	As $q$ is surjective, there is an equality $q(Z) = q(\overline{Z}) \setminus q(\overline{Z} \setminus Z)$, and so $q(Z)$ is open in $q(\overline{Z})$. Since the property of a morphism being a geometric quotient is local on the base, $q$ restricts to a geometric quotient $Z \to q(Z)$.
\end{proof}

\begin{lemma} \label{lemma: restricting geometric quotients locally trivial}
	Let $q : X \to X/H$ be a Zariski-locally trivial quotient for the action of an algebraic group $H$ (not assumed reductive) on a scheme $X$ of finite type. Let $Z \subset X$ be an $H$-invariant locally closed subscheme of $X$. Then there exists a unique scheme structure on the set-theoretic image $q(Z)$ and a locally closed immersion $q(Z) \to X/H$ such that $Z \cong q(Z) \times_{X/H} X$ as schemes and such that $q|_Z : Z \to q(Z)$ is a locally trivial quotient.
\end{lemma}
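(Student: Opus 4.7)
The plan is to reduce to the trivial case locally on $X/H$ and then glue the resulting local pieces. Choose a Zariski open cover $\{U_i\}$ of $X/H$ over which $q$ is trivial, so that there are isomorphisms $\varphi_i : q^{-1}(U_i) \xrightarrow{\sim} U_i \times H$ of schemes over $U_i$, compatible with the $H$-action where $H$ acts on $U_i \times H$ by translation on the second factor. Write $Z_i := Z \cap q^{-1}(U_i)$, a locally closed $H$-invariant subscheme of $q^{-1}(U_i)$, which under $\varphi_i$ corresponds to an $H$-invariant locally closed subscheme of $U_i \times H$.

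The key step is to show that any $H$-invariant locally closed subscheme $Z_i' \subset U_i \times H$ is of the form $\pi_i^{-1}(W_i)$ for a unique locally closed subscheme $W_i \subset U_i$, where $\pi_i : U_i \times H \to U_i$ is the first projection. Uniqueness is immediate because $\pi_i$ is faithfully flat. For existence, I would first reduce to the affine case, writing $U_i$ as a union of affines $\spec A$; the $H$-invariance of $Z_i'|_{\spec A \times H}$ means that its ideal in $A \otimes_{\C} \OO_H$ (cut out of a suitable affine piece of $\spec A \times H$) is stable under the coaction map, and a standard argument with the Hopf algebra structure of $\OO_H$ (equivalently, faithfully flat descent along $\pi_i$) shows that such an ideal is generated by an ideal of $A$. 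One then defines $W_i := \pi_i(Z_i')$ with this scheme structure. I expect this to be the main obstacle of the proof, though it is essentially standard descent.

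With step two in hand, the $W_i \subset U_i$ obtained from the various $Z_i$ automatically agree on overlaps $U_i \cap U_j$ by the uniqueness statement, since both $W_i|_{U_i \cap U_j}$ and $W_j|_{U_i \cap U_j}$ are characterised as the unique locally closed subscheme of $U_i \cap U_j$ whose preimage in $q^{-1}(U_i \cap U_j)$ is $Z \cap q^{-1}(U_i \cap U_j)$; the transition $H$-cocycle does not affect the $U_i$-factor. Hence the $W_i$ glue to a locally closed subscheme $Y \subset X/H$, whose underlying set is the image $q(Z)$.

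It remains to verify the claimed properties. Pulling back $Y$ along $q$ and comparing over $U_i$ gives
\[
Y \times_{X/H} q^{-1}(U_i) \cong W_i \times_{U_i} (U_i \times H) \cong W_i \times H \cong Z_i,
\]
so $Y \times_{X/H} X \cong Z$ as claimed. Moreover the restriction $q|_Z : Z \to Y$ is, locally over $U_i$, identified with the projection $W_i \times H \to W_i$, so $q|_Z$ is Zariski-locally trivial. Uniqueness of the scheme structure on $q(Z)$ as a locally closed subscheme of $X/H$ satisfying $Z \cong q(Z) \times_{X/H} X$ follows because any such structure, when pulled back along $q$, must recover $Z$, and this pullback determines the structure on each $W_i$ by the faithful flatness of $q$.
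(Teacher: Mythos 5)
Your proof is correct and follows essentially the same route as the paper's: both reduce to a trivialising cover and use the key fact that an $H$-invariant subscheme of $U_i \times H$ is the preimage of a subscheme of $U_i$. The only organisational difference is that the paper globalises by first passing to the closure $\overline{Z}$ and its scheme-theoretic image (so no gluing is needed) and then restricting to the open piece $Z$, whereas you glue the local descents $W_i$ directly via the uniqueness statement; both are fine.
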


\begin{proof}
	Once again consider the closure $\overline{Z}$. The image $q(\overline{Z})$ is closed in $X/H$; we endow $q(\overline{Z})$ with the scheme-theoretic image scheme structure. Over an affine open $\spec A \subset X/H$ where $q$ is trivial, we have $q^{-1}(\spec A) = \spec A \times H$, where $H$ acts trivially on the first factor and by multiplication on the second. Writing $q(\overline{Z}) \cap \spec A = \spec A/I$ for $I \subset A$ an ideal, as $\overline{Z}$ is $H$-invariant we have $\overline{Z} = q^{-1}(q(\overline{Z})) = \spec A/I \times H$, and $q|_{\overline{Z}}$ is the projection $\spec(A/I) \times H \to \spec(A/I)$. It follows that $q|_{\overline{Z}} : \overline{Z} \to q(\overline{Z})$ is a locally trivial $H$-quotient and that $\overline{Z} \cong q(\overline{Z}) \times_{X/H} X$ as schemes. By restricting to the open subscheme $Z \subset \overline{Z}$, the same statements hold for $Z$ in place of $\overline{Z}$, since $q(Z)$ is open in $q(\overline{Z})$.
	
	The uniqueness of the scheme structure follows from the fact that locally trivial quotients are categorical quotients and categorical quotients are unique up to unique isomorphism.
\end{proof}

\begin{lemma} \label{lemma: borel construction}
	Let $G$ be a linear algebraic group acting on quasi-projective schemes $X$ and $Y$. Let $H \subset G$ be the stabiliser of a point $y_0 \in Y$, and let $Z = X \times \{y_0\} \subset X \times Y$. Then the map $\sigma_Z : G \times Z \to GZ$, $(g,z) \mapsto gz$ descends to an isomorphism $G \times^H Z \cong GZ$.
\end{lemma}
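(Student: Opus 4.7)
The plan is to verify $H$-invariance of $\sigma_Z$ and then use a change-of-variables trick. For any $h \in H$ and $(g,z) \in G \times Z$, we have $\sigma_Z(gh^{-1}, hz) = gh^{-1}hz = gz$, so $\sigma_Z$ is $H$-invariant and descends to a $G$-equivariant morphism $\bar{\sigma}_Z : G \times^H Z \to GZ$. The goal is then to show $\bar{\sigma}_Z$ is an isomorphism.

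The key step is to introduce the change-of-variables morphism
$$ \Psi : G \times Z \longrightarrow G \times_{Gy_0} GZ, \qquad (g, z) \longmapsto (g, gz), $$
where the fibre product is taken along the orbit map $G \to Gy_0$, $g \mapsto gy_0$, and the second projection $GZ = X \times Gy_0 \to Gy_0$. Here I use that $Gy_0$ is a locally closed subscheme of $Y$ isomorphic to $G/H$, and hence that $GZ = X \times Gy_0$ as a locally closed subscheme of $X \times Y$. A direct computation shows $\Psi$ is an isomorphism with inverse $(g, w) \mapsto (g, g^{-1}w)$. Moreover $\Psi$ is $H$-equivariant when $H$ acts on the target via the ``first factor'' action $h \cdot (g, w) = (gh^{-1}, w)$, since $\Psi(gh^{-1}, hz) = (gh^{-1}, gz)$.

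The rest is then almost formal: since $G \to G/H \cong Gy_0$ is a geometric $H$-quotient, so is its pullback $G \times_{Gy_0} GZ \to GZ$ along the second projection. Composing with $\Psi^{-1}$, the map $\sigma_Z : G \times Z \to GZ$ is exhibited as a geometric $H$-quotient, so by the uniqueness of geometric quotients this identifies $G \times^H Z$ with $GZ$. Unwinding the maps shows this identification is exactly $\bar{\sigma}_Z$.

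The main potential obstacle is the scheme-theoretic verification that $Gy_0 \cong G/H$ is a locally closed subscheme of $Y$ (so that $GZ$ has the expected scheme structure) and that pullback of the geometric quotient $G \to G/H$ along $GZ \to G/H$ remains a geometric quotient. Both are standard: the first is a classical fact about orbits of linear algebraic groups on quasi-projective schemes, and the second follows from the fact that $G \to G/H$ is étale-locally a principal $H$-bundle, and principal bundles pull back to principal bundles, which are in particular geometric quotients.
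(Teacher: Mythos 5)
Your proposal is correct and is essentially the paper's own argument: your change-of-variables isomorphism $\Psi : G \times Z \to G \times_{Gy_0} GZ$ is precisely the assertion that the square with vertical arrows $\sigma_Z$ and $\sigma_{y_0} : G \to Gy_0$ is Cartesian, after which both proofs conclude by pulling back the principal $H$-bundle structure on the orbit map and invoking uniqueness of geometric quotients. The only cosmetic difference is that you make the fibre-product identification explicit (and justify $GZ = X \times Gy_0$), whereas the paper states the Cartesian square directly and cites faithful flatness of the orbit map plus smoothness of $H$ for the bundle structure.
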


\begin{proof}
	The orbit map $\sigma_{y_0} : G = G \times \{y_0\} \to Gy_0$ is faithfully flat (cf. \cite[Proposition 7.4]{milnegroups}) and hence is a principal $H$-bundle (with respect to the fppf topology). As the diagram
	\[\begin{tikzcd}[ampersand replacement=\&]
		{G \times Z} \&\& {G \times \{y_0\}} \\
		\\
		GZ \&\& {Gy_0}
		\arrow["{\mathrm{id}_G \times \mathrm{pr}_Y}", from=1-1, to=1-3]
		\arrow["{\mathrm{pr}_Y}"', from=3-1, to=3-3]
		\arrow["{\sigma_Z}"', from=1-1, to=3-1]
		\arrow["{\sigma_{y_0}}", from=1-3, to=3-3]
	\end{tikzcd}\]
is Cartesian, $\sigma_Z$ is also a principal $H$-bundle, and in particular a geometric $H$-quotient. The lemma then follows from the uniqueness of geometric quotients.
\end{proof}

\subsection{The Chow Linearisation on Hilbert Schemes} \label{section: chow linearisation}

Let $\mathrm{Hilb}(\PP(V), \Phi)$ be the Hilbert scheme parametrising subschemes of $\PP(V)$ with Hilbert polynomial $\Phi$, where
$$ \Phi(t) = \frac{d}{n!} t^n + O(t^{n-1}). $$
Let $\mathrm{Chow}_{n,d}(\PP(V))$ be the Chow scheme of Angéniol \cite{angeniol} parametrising families of cycles in $\PP(V)$ of dimension $n$ and degree $d$. The Chow scheme has the following properties (cf. \cite[Sections 6-7]{angeniol} and \cite[Paper IV, Sections 9, 16 and 17]{rydhthesis}):
\begin{enumerate}[label=(\roman*)]
	\item There is a natural proper $GL(V)$-equivariant morphism $\Psi_{\mathrm{HC}} : \mathrm{Hilb}(\PP(V), \Phi) \to \mathrm{Chow}_{n,d}(\PP(V))$, known as the \emph{Hilbert--Chow morphism}. The restriction of $\Psi_{\mathrm{HC}}$ to the open subscheme $\mathrm{Hilb}^{\mathrm{smth}}(\PP(V), \Phi)$ parametrising non-singular closed subschemes of $\PP(V)$ is an open immersion.
	\item $\Psi_{\mathrm{HC}}$ is a local immersion over the locus parametrising equidimensional and reduced subschemes.
	\item The underlying reduced scheme $\mathrm{Chow}_{n,d}(\PP(V))_{\mathrm{red}}$ is the Chow variety, as introduced by Chow and van der Waerden in \cite{chowvdwaerden}.
	\item Let $\mathrm{Div}_{n,d}(\PP(V))$ be the projective space of multidegree $d$ divisors in $\PP(V^{\vee})^{\times(n+1)}$. Then there is a natural $GL(V)$-equivariant morphism $\Xi_{\mathrm{Chow}} : \mathrm{Chow}_{n,d}(\PP(V)) \to \mathrm{Div}_{n,d}(\PP(V))$ whose restriction to $\mathrm{Chow}_{n,d}(\PP(V))_{\mathrm{red}}$ is a closed immersion; the composite $\Xi_{\mathrm{HC}} = \Xi_{\mathrm{Chow}} \circ \Psi_{\mathrm{HC}}$ assigns to a subvariety $X \subset \PP(V)$ its Chow form $\Xi_X$. In particular $\Xi_{\mathrm{Chow}}$ is a finite morphism, and hence $\mathcal{L}_{\mathrm{Chow}} = \mathcal{L}_{\mathrm{Chow}_{n,d}(\PP(V))} := \Xi_{\mathrm{Chow}}^{\ast} \OO_{\mathrm{Div}_{n,d}(\PP(V))}(1)$ is an ample linearisation for the $GL(V)$-action on the projective scheme $\mathrm{Chow}_{n,d}(\PP(V))$, known as the \emph{Chow linearisation}.
	\item As the Chow form $\Xi_X$ of a subvariety $X \subset \PP(V)$ uniquely determines $X$, all points of the fibres $\Xi_{\mathrm{HC}}^{-1}(\Xi_X)$ and $\Xi_{\mathrm{Chow}}^{-1}(\Xi_X)$ have underlying reduced scheme $X$.
\end{enumerate}

These properties are summarised in the following diagram:
\[\begin{tikzcd}[ampersand replacement=\&,column sep=scriptsize]
	{\mathrm{Hilb}(\PP(V), \Phi)} \&\& {\mathrm{Chow}_{n,d}(\PP(V))} \&\& {\mathrm{Div}_{n,d}(\PP(V))} \\
	\\
	{\mathrm{Hilb}^{\mathrm{smth}}(\PP(V), \Phi)} \&\& {\mathrm{Chow}_{n,d}(\PP(V))_{\mathrm{red}} = \mathrm{ChowVar}_{n,d}(\PP(V))}
	\arrow["\shortmid"{marking}, hook', from=3-3, to=1-3]
	\arrow["\shortmid"{marking}, hook, from=3-3, to=1-5]
	\arrow["{\Xi_{\mathrm{Chow}}}", from=1-3, to=1-5]
	\arrow["{\Psi_{\mathrm{HC}}}", from=1-1, to=1-3]
	\arrow["\circ"{description}, hook', from=3-1, to=1-1]
	\arrow["\circ"{description}, hook, from=3-1, to=1-3]
	\arrow["{\Xi_{\mathrm{HC}}}", curve={height=-24pt}, from=1-1, to=1-5]
\end{tikzcd}\]

Suppose $\lambda : \G_m \to SL(V)$ is a 1PS, and $X \subset \PP(V)$ is a subvariety fixed by the action of $\lambda$. Let $S(X) = \mathrm{Sym}^{\bullet}(V^{\vee})/\mathbb{I}(X) = \bigoplus_m S(X)_m$ be the homogeneous coordinate ring of $X$. Let $w([X]_m, \lambda)$ be the $\lambda$-weight of the vector space $S(X)_m$.\footnote{If $W = \bigoplus_{i \in \Z} W_i$ is the weight space decomposition of a finite dimensional $\G_m$-representation $W$, the weight of $W$ is given by $\sum_i i \dim W_i$.}

\begin{prop}[\cite{mumfordstability}, Proposition 2.11] \label{prop: chow weight general}
	For large $m$, $w([X]_m, \lambda)$ is represented by a polynomial of the form $\frac{-a_X}{(n+1)!} m^{n+1} + O(m^n)$. The Hilbert--Mumford weight of $\Psi_{\mathrm{HC}}(X)$ is given by 
	$$ \mu^{\mathcal{L}_{\mathrm{Chow}}}(\Psi_{\mathrm{HC}}(X), \lambda) = \mu^{\OO_{\mathrm{Div}_{n,d}(\PP(V))}(1)}(\Xi_{\mathrm{HC}}(X), \lambda) = a_X. $$
\end{prop}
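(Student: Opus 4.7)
The plan is to establish both claims by combining asymptotic analysis of the $\lambda$-equivariant Hilbert function of $S(X)$ with a direct computation of the weight of the Chow form, linked through a flat degeneration by $\lambda$.

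First I would fix a $\lambda$-diagonalising basis $x_0, \ldots, x_N$ of $V^\vee$ with weights $r_0 \geq \cdots \geq r_N$, and bigrade $\mathrm{Sym}^\bullet V^\vee$ by declaring $x_j$ to have bidegree $(1, r_j)$. Because $X$ is $\lambda$-fixed, the ideal $\mathbb{I}(X)$ is bihomogeneous, so $S(X) = \bigoplus_{m,w} S(X)_{m,w}$ inherits a bigrading and
$$ w([X]_m, \lambda) = \sum_w w \dim_\C S(X)_{m, w}. $$
The support of $\dim S(X)_{m,\bullet}$ lies in the window $[m r_N, m r_0]$ of width linear in $m$, while $\sum_w \dim S(X)_{m,w} = \Phi(m)$ has degree $n$; applying standard multigraded Hilbert polynomial theory to the finitely generated bigraded module $S(X)$ then shows the weight sum is polynomial in $m$ of degree at most $n+1$ for $m \gg 0$. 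Writing its leading coefficient as $-a_X/(n+1)!$ \emph{defines} $a_X$, which establishes the first claim.

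The equality $\mu^{\LL_{\mathrm{Chow}}}(\Psi_{\mathrm{HC}}(X), \lambda) = \mu^{\OO_{\mathrm{Div}_{n,d}(\PP(V))}(1)}(\Xi_{\mathrm{HC}}(X), \lambda)$ is immediate from $\LL_{\mathrm{Chow}} = \Xi_{\mathrm{Chow}}^* \OO(1)$ together with the naturality of Hilbert--Mumford weights under pullback along equivariant morphisms. The Chow form $\Xi_X$ is a multi-homogeneous form of multidegree $(d, \ldots, d)$ on $V^{\oplus(n+1)}$, regarded as a point of $\PP(\mathrm{Sym}^d V^{\otimes(n+1)})$. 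As $X$ is $\lambda$-fixed, $\Xi_X$ is a $\lambda$-weight vector, and $\mu^{\OO(1)}(\Xi_X, \lambda)$ equals minus this weight by our sign convention.

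The core of the argument is the identification $a_X = \mu^{\OO(1)}(\Xi_X, \lambda)$. My plan is to reduce to a combinatorial base case by degeneration: form the flat limit $X_0 = \lim_{t \to 0} \lambda(t) \cdot X$, which is scheme-theoretically $\lambda$-fixed. Flatness preserves every bigraded dimension $\dim S(X)_{m,w}$, so $a_{X_0} = a_X$; and by Proposition \ref{prop: grobner bases and flat limits} together with the equivariance of the Chow-form construction, $\Xi_{X_0}$ is the $\prec_\lambda$-initial component of $\Xi_X$, so $\mu^{\OO(1)}(\Xi_{X_0}, \lambda) = \mu^{\OO(1)}(\Xi_X, \lambda)$. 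We may therefore assume $X$ is scheme-theoretically $\lambda$-invariant. Both invariants are then additive on the associated cycle of $X$ --- $a_X$ by additivity of weighted Hilbert functions on short exact sequences coming from a primary decomposition of $S(X)$, and $\mu^{\OO(1)}(\Xi_X, \lambda)$ by the multiplicative factorisation $\Xi_{Y_1 + Y_2} = \Xi_{Y_1} \cdot \Xi_{Y_2}$ --- reducing to the case where $X$ is a single reduced $\lambda$-invariant subvariety. A further iterated degeneration (or application of torus-invariant components) brings us to $X = \Lambda$ a coordinate linear subspace, where $a_\Lambda$ and $\mu^{\OO(1)}(\Xi_\Lambda, \lambda)$ are both explicitly computable in terms of the weights $r_j$ of the coordinates spanning $\Lambda$ and can be seen to coincide.

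The principal obstacle is the bookkeeping in the final reduction: tracking multiplicities through the primary decomposition of possibly non-reduced $\lambda$-fixed degenerations, and verifying that the leading-coefficient identification survives each step. Once the additivity is nailed down and the coordinate-subspace calculation matched, the two polynomial asymptotics agree and the proposition follows.
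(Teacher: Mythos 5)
The paper offers no proof of this proposition at all: it is quoted verbatim from Mumford's \emph{Stability of projective varieties} (Proposition 2.11 there), so there is no in-paper argument to compare yours against. Judged on its own terms, your plan follows a recognisable classical route (asymptotics of the bigraded Hilbert function, degeneration, additivity over the top-dimensional cycle, explicit check on coordinate linear subspaces), and the first two claims --- polynomiality of $w([X]_m,\lambda)$ in degree $\leq n+1$, and the equality of the two Hilbert--Mumford weights via $\LL_{\mathrm{Chow}} = \Xi_{\mathrm{Chow}}^{\ast}\OO(1)$ --- are handled correctly. Note, though, that your first degeneration step is vacuous: the proposition already assumes $X$ is $\lambda$-fixed (it has to be, or $w([X]_m,\lambda)$ is not even defined), so $\lim_{t\to 0}\lambda(t)\cdot X = X$. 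Moreover the principle you invoke there, that ``flatness preserves every bigraded dimension,'' is false for a degeneration by $\lambda$ applied to a non-$\lambda$-homogeneous ideal; it is only harmless here because the step does nothing.

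The genuine gap is concentrated exactly where you flag ``the principal obstacle.'' The reduction to coordinate subspaces requires degenerating by a second one-parameter subgroup $\mu$ of a maximal torus containing $\lambda(\G_m)$, and you must then prove two nontrivial compatibilities: (i) that the $\lambda$-refined Hilbert function $\dim S(X)_{m,w}$ is preserved under the Gr\"obner degeneration by $\mu$ --- this does hold, because $\mathbb{I}(X)$ is $\lambda$-homogeneous and $\mu$ commutes with $\lambda$, so initial ideals can be computed weight-space by weight-space, but it is a different statement from the flatness assertion you made and needs its own argument; and (ii) that the multiplicities of the coordinate $n$-planes in the primary decomposition of $\mathrm{in}_{\mu}(\mathbb{I}(X))$ agree with the exponents in the factorisation of the limiting Chow form, i.e.\ that the Hilbert--Chow correspondence is compatible with taking flat limits of both the scheme and its Chow form. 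This second point is precisely the content of Mumford's own argument (via his multiplicity theory in \S 2 of \emph{Stability of projective varieties}) and cannot be waved through as bookkeeping; until it is supplied, the identification $a_X = \mu^{\OO(1)}(\Xi_X,\lambda)$ is not established. The base case for a coordinate subspace and the cycle additivity are fine, modulo a careful check of the sign conventions relating the weight of $\Xi_X$ as a vector to the weight on the fibre of $\L^{\vee}$.
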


In the case where $\mathrm{Hilb}(\PP(V), d)$ is the Hilbert scheme of length $d$ subschemes of $\PP(V)$, $\Xi_{\mathrm{HC}}$ factors through the natural morphism
$$ \Xi_{\mathrm{HC}} : \mathrm{Hilb}(\PP(V), d) \to \mathrm{Sym}^d(\PP(V)) = \prod_d \PP(V) / \mathfrak{S}_d, $$
and this restricts to an open immersion on the open subscheme parametrising $d$ disjoint unordered reduced points in $\PP(V)$.

\begin{prop} \label{prop: chow stability dim 0}
	Fix a splitting $V = W \oplus W'$, so that we may regard $SL(W)$ as a subgroup of $SL(V)$ by declaring that $SL(W)$ acts trivially on vectors in $W'$. Suppose $Y \in \mathrm{Hilb}(\PP(V), d)$ corresponds to a set of distinct unordered points $\Xi_{\mathrm{HC}}(Y) = \{p_1, \dots, p_d\}$, with each $p_i \in \PP(W)$. Then $\Psi_{\mathrm{HC}}(Y) \in \mathrm{Chow}_{0,d}(\PP(V))$ is $SL(W)$-(semi)stable with respect to the Chow linearisation if and only if for all proper linear subspaces $Z \subset \PP(W)$,
	\begin{equation} \label{eq: chow stability for points}
		\frac{\#(Y \cap Z)}{d} < (\leq) \ \frac{\dim Z + 1}{\dim W}.
	\end{equation}
\end{prop}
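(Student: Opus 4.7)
The plan is to apply the Hilbert--Mumford criterion together with Proposition \ref{prop: chow weight general}. Since every $p_i$ lies in $\PP(W)$ and $SL(W)$ acts trivially on the complementary factor $W'$, it suffices to analyse 1PSs $\lambda$ of $SL(W)$; for any such $\lambda$, I would fix a basis $e_0, \dots, e_N$ of $W$ and dual basis $x_0, \dots, x_N$ of $W^\vee$ diagonalising $\lambda$ with weights $r_0 \geq \dots \geq r_N$ summing to zero, and write each $p_i = [v_i]$ with $v_i = \sum_j a_{ij} e_j$.

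The first step is to identify the flat limit $Y_0 = \lim_{t \to 0} \lambda(t) \cdot Y$. Setting $j_i^\ast := \min\{j : a_{ij} \neq 0\}$, the limit of $[v_i]$ is the $\lambda$-fixed coordinate point $[e_{j_i^\ast}]$, so $Y_0$ is the $0$-cycle $\sum_i [e_{j_i^\ast}]$ supported on the coordinate points. Proposition \ref{prop: chow weight general} then computes the Hilbert--Mumford weight $\mu^{\mathcal{L}_{\mathrm{Chow}}}(\Psi_{\mathrm{HC}}(Y), \lambda) = a_{Y_0}$ directly from the leading coefficient of $w([Y_0]_m, \lambda)$, which emerges as a signed linear combination of the $r_{j_i^\ast}$.

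To convert this into a statement about linear subspaces, I would introduce the descending flag $W^k := \langle e_k, \dots, e_N \rangle$ of $W$ (so $v_i \in W^k$ iff $j_i^\ast \geq k$) and the counting function $m_k := \#(Y \cap \PP(W^k))$. Grouping the sum by the value of $j_i^\ast$ and performing Abel summation, combined with the relation $\sum_k r_k = 0$, rewrites $a_{Y_0}$ in the form
\[
a_{Y_0} \;=\; c \sum_{k=1}^{N} (r_{k-1} - r_k) \cdot d \left( \frac{\dim \PP(W^k) + 1}{\dim W} - \frac{\#(Y \cap \PP(W^k))}{d} \right)
\]
for an appropriate sign $c$. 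The differences $r_{k-1} - r_k$ are all non-negative, and are not all zero precisely when $\lambda$ is non-trivial; moreover, as the flag varies over all complete flags in $W$, every proper linear subspace $Z \subset \PP(W)$ arises as some $\PP(W^k)$ with independently assignable weight-gaps $r_{k-1} - r_k \geq 0$.

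The equivalence claimed by the proposition then drops out: requiring $a_{Y_0}$ to have the stability sign for every non-trivial $\lambda$ forces each coefficient $\frac{\dim \PP(W^k) + 1}{\dim W} - \frac{\#(Y \cap \PP(W^k))}{d}$ to have the required sign, which is precisely \eqref{eq: chow stability for points}; the strict version gives stability. The main obstacle is pinning down $c$ correctly---verifying that the ``many points concentrated in a small subspace'' direction is indeed the destabilising one---which requires careful bookkeeping of the sign convention in Proposition \ref{prop: chow weight general} together with the Hilbert--Mumford convention for the fibre of $\mathcal{L}_{\mathrm{Chow}}$ at the limit point. Once these signs are reconciled, the remainder of the argument is a direct computation.
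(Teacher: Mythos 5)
Your route is genuinely different from the paper's: the paper disposes of this in two lines, observing that the $SL(W)$-equivariant closed immersion $\mathrm{Chow}_{0,d}(\PP(W)) \hookrightarrow \mathrm{Chow}_{0,d}(\PP(V))$ pulls the Chow linearisation back to the Chow linearisation, and then citing the classical criterion for configurations of points (\cite[Proposition 7.27]{mukaimoduli}). What you propose is essentially to reprove that classical criterion from scratch via Proposition \ref{prop: chow weight general}, which is a perfectly viable plan --- indeed it is how the cited result is proved --- and it has the small advantage of not needing the compatibility-of-linearisations observation, since you compute the weight directly in $\PP(V)$ (where it is harmless that the ambient space is larger, because the limit cycle and its coordinate ring are unaffected by the extra coordinates on which $\lambda$ acts trivially).

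However, as written the argument has a genuine gap, and it sits exactly where you flag it. First, a smaller point: with the weights ordered $r_0 \geq \cdots \geq r_N$, the limit $\lim_{t\to 0}\lambda(t)\cdot[v_i]$ is the projection of $v_i$ to the \emph{minimal} occupied weight space, so the relevant index is $\max\{j : a_{ij} \neq 0\}$, not $\min$, and the flag that naturally appears in the Abel summation is the ascending one $\langle e_0, \dots, e_k\rangle$; your descending flag $W^k$ is paired with the wrong counting function. (Also, the limit need not be a coordinate point when the minimal occupied weight has multiplicity, though only its weight matters.) Second, and more seriously, leaving the sign $c$ undetermined is not a deferrable detail: the statement to be proved is precisely that ``too many points in a small subspace'' is the destabilising direction, and a computation that yields either \eqref{eq: chow stability for points} or its reverse, depending on an unverified sign, has not proved the proposition. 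The sign bookkeeping here is genuinely delicate --- it involves the weight convention on $V$ versus $V^{\vee}$ in the Gr\"obner basis section, the $-a_X$ normalisation in Proposition \ref{prop: chow weight general}, and the paper's $\L^{\vee}$ convention for $\mu$ --- and a concrete sanity check (e.g.\ three distinct points in $\PP^1$, which must come out stable) is the minimum needed to close it. Until that is done, the proof establishes the shape of the criterion but not the criterion itself.
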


\begin{proof}
	There is an $SL(W)$-equivariant closed immersion of Chow schemes $$ \mathrm{Chow}_{0,d}(\PP(W)) \hookrightarrow \mathrm{Chow}_{0,d}(\PP(V)), $$ 
	under which the Chow linearisation on $\mathrm{Chow}_{0,d}(\PP(V))$ pulls back to the Chow linearisation on $\mathrm{Chow}_{0,d}(\PP(W))$. As such, we may work instead with the $SL(W)$-action on $\mathrm{Chow}_{0,d}(\PP(W))$. The result then follows from \cite[Proposition 7.27]{mukaimoduli}.
\end{proof}

In the case where $\lambda$ is a 1PS of $SL(V)$ with two distinct weights and where $Y \subset \PP(V)$ is a closed subvariety contained entirely within a single weight space, the Chow weight of $Y$ can be computed in a very straightforward manner.

\begin{lemma} \label{lemma: chow weight with only one weight}
	Suppose we are given a decomposition $V = U \oplus W$, and let $Y \subset \PP(W)$ be a closed subvariety of dimension $r$ and degree $d$. Let $\lambda : \G_m \to SL(V)$ be the 1PS defined by declaring that each $u \in U$ has weight $a$ and each $w \in W$ has weight $b$, where $a \dim U + b \dim W = 0$ and where neither $a$ nor $b$ are zero. Then the Hilbert--Mumford weight of the point $\Psi_{\mathrm{HC}}(Y)$ (where $Y$ is considered as a closed subvariety of $\PP(V)$) is equal to $b d (r + 1)$.
\end{lemma}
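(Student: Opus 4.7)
The strategy is to apply Proposition \ref{prop: chow weight general} after computing the $\lambda$-weight of each graded piece $S(Y)_m$ of the homogeneous coordinate ring of $Y$ directly. To begin, since $\lambda$ acts on $W \subset V$ by the nonzero scalar $b$, the induced action on $\PP(W)$ is trivial, so in particular $\lambda$ fixes $Y \subset \PP(W)$ pointwise and $\Psi_{\mathrm{HC}}(Y)$ is a $\lambda$-fixed point of $\mathrm{Chow}_{r,d}(\PP(V))$; Proposition \ref{prop: chow weight general} therefore applies.

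Next I would identify $S(Y)_m$ as a $\lambda(\G_m)$-representation. Since $Y$ is contained in the zero locus of every linear form on $U$, the homogeneous ideal $\mathbb{I}(Y) \subset \sym^{\bullet}(V^{\vee})$ contains $U^{\vee} \subset V^{\vee}$, and hence the composite $\sym^{\bullet}(W^{\vee}) \hookrightarrow \sym^{\bullet}(V^{\vee}) \twoheadrightarrow S(Y)$ is surjective and realises $S(Y)_m$ as a quotient of $\sym^m(W^{\vee})$. Because $\lambda$ acts on $W^{\vee}$ purely by weight $-b$, the space $\sym^m(W^{\vee})$ and hence its quotient $S(Y)_m$ is concentrated in $\lambda$-weight $-bm$, giving $w([Y]_m, \lambda) = -bm \cdot \dim_{\C} S(Y)_m$.

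Finally, since $Y$ has dimension $r$ and degree $d$ its Hilbert polynomial equals $\frac{d}{r!} m^r + O(m^{r-1})$, so for large $m$
$$ w([Y]_m, \lambda) = -bm\left( \frac{d}{r!} m^r + O(m^{r-1}) \right) = -\frac{bd(r+1)}{(r+1)!} m^{r+1} + O(m^r). $$
Comparing with the leading term in Proposition \ref{prop: chow weight general} yields $a_Y = bd(r+1)$, which is precisely the claimed Hilbert--Mumford weight. No serious obstacle arises here; the only point requiring care is the identification of $S(Y)_m$ with a quotient of $\sym^m(W^{\vee})$ compatible with the $\lambda$-action, which is immediate from $U^{\vee} \subset \mathbb{I}(Y)$.
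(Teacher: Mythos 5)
Your proof is correct and follows essentially the same route as the paper's: both identify $S(Y)_m$ with a quotient of $\sym^m(W^{\vee})$ via $U^{\vee}\subset\mathbb{I}(Y)$, observe that this space is concentrated in $\lambda$-weight $-bm$, and then read off $a_Y=bd(r+1)$ from the leading term of $-bm\cdot\dim_\C S(Y)_m$ using Proposition \ref{prop: chow weight general}. The only cosmetic difference is that you add the (correct) preliminary remark that $\lambda$ acts trivially on $\PP(W)$ and hence fixes $\Psi_{\mathrm{HC}}(Y)$, which the paper leaves implicit.
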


\begin{proof}
	Pick bases $x_1, \dots, x_n$ and $y_1, \dots, y_m$ for $U^{\vee}$ and $W^{\vee}$ respectively, and let $I_Y = \mathbb{I}_{\PP(W)}(Y) \subset \mathrm{Sym}^{\bullet} W^{\vee} = \C[y_1, \dots, y_m]$. The homogeneous ideal of $Y \subset \PP(V)$ is given by
	$$ \mathbb{I}_{\PP(V)}(Y) = (I_Y, x_1, \dots, x_n) \cdot R \subset R := \C[x_1, \dots, x_n, y_1, \dots, y_m], $$
	so for all $k$ sufficiently large we have
	$$ H^0(Y, \OO_{\PP(V)}(k)|_Y) = \frac{R_k}{\mathbb{I}_{\PP(V)}(Y)_k} = \frac{\C[y_1, \dots, y_m]_k}{(I_Y)_k}, \quad h^0(Y, \OO_{\PP(V)}(k)|_Y) = \frac{d}{r!} k^r + O(k^{r-1}). $$
	All elements of $\C[y_1, \dots, y_m]_k/(I_Y)_k$ have weight $-kb$, so for all $k$ sufficiently large
	$$ w([Y]_k, \lambda) = -k b h^0(Y, \OO_{\PP(V)}(k)|_Y) = \frac{-b d (r+1)}{(r+1)!}k^{r+1} + O(k^r). $$
	Applying Proposition \ref{prop: chow weight general} completes the proof of the lemma.
\end{proof}	

\subsection{Joins of Varieties with Linear Subspaces and Flat Limits}

We recall the definition of the join of a subvariety $Y \subset \PP(V)$ with a linear subspace $L \subset \PP(V)$.

\begin{defn}
	Let $Y \subset \PP(V)$ be a subvariety and let $L \subset \PP(V)$ be a linear subspace with the property that $Y \cap L = \emptyset$. The \emph{join} of $Y$ and $L$ is the subvariety of $\PP(V)$ obtained as the union of all lines joining points of $Y$ with points of $L$:
	$$ J(Y,L) = \bigcup_{\substack{y \in Y \\ q \in L}} \langle y, q \rangle. $$
\end{defn}

Suppose $Y$ is contained in a linear subspace $L' = \PP(W) \subset \PP(V)$ disjoint from $L$ with the property that $\PP(V) = \langle L, L' \rangle$. The following facts are all standard.
\begin{enumerate}[label=(\roman*)]
	\item There is an equality $\mathbb{I}_{\PP(V)}(J(Y,L)) = \mathbb{I}_{\PP(W)}(Y) \cdot S$ of ideals of $S = \mathrm{Sym}^{\bullet}(V^{\vee})$.
	\item For each $q \in L$, $\PP T_q J(Y,L) = \PP(V)$.
	\item Suppose $p \in J(Y,L) \setminus L$ lies on the line joining $y \in Y$ to $q \in L$. Then $\PP T_p J(Y,L) = \langle L, \PP T_y Y \rangle$. If $y \in Y$ is a non-singular point then $\dim \PP T_p J(Y,L) = \dim_y Y + \dim L + 1$.
\end{enumerate}

Let $X \subset \PP(V)$ be a closed subvariety, and suppose $V = U \oplus W$ for non-zero subspaces $U, W \subset V$. Assume that $X$ is not contained in all of $\PP(W)$. Define a 1PS $\lambda$ of $GL(V)$ by declaring that all vectors $u \in U$ have weight $a$ and all vectors $w \in W$ have weight $b$, where $a < b$. Let $Y = X \cap \PP(W)$, and let $X_0 = \lim_{t \to 0} \lambda(t) \cdot X$ be the flat limit of $X$ under $\lambda$.

\begin{lemma} \label{lemma: flat limits are joins}
	In the above situation, assume in addition that the rational map $\mathrm{pr}_{\PP(U)} : X \dashrightarrow \PP(U)$ is dominant, that $Y$ is reduced and that the closed immersion $Y \hookrightarrow X$ is regular. Then the flat limit $X_0$ is given by the join $J(Y, \PP(U))$.
\end{lemma}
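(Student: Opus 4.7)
The strategy is to combine Proposition~\ref{prop: grobner bases and flat limits} (identifying $\mathbb{I}(X_{0})$ with $\mathrm{in}_{\lambda}(\mathbb{I}(X))$) with fact (i) preceding the lemma (identifying $\mathbb{I}(J(Y,\PP(U)))$ with $\mathbb{I}_{\PP(W)}(Y)\cdot S$) and then show that these two ideals agree. Fix bases $x_{1},\ldots,x_{n}$ of $U^{\vee}$ and $y_{1},\ldots,y_{m}$ of $W^{\vee}$ diagonalising $\lambda$; a monomial $x^{\alpha}y^{\beta}$ has $\lambda$-weight $-a|\alpha|-b|\beta|$, which at fixed total degree $d$ equals $-bd+(b-a)|\alpha|$ and is therefore strictly increasing in $|\alpha|$ since $a<b$. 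Consequently, if $f\in S$ is homogeneous and $f=\sum_{m}f_{(m)}$ denotes its decomposition by $x$-degree $|\alpha|=m$, then $\mathrm{in}_{\lambda}(f)$ is the lowest nonzero slice $f_{(m_{0}(f))}$ where $m_{0}(f):=\min\{m:f_{(m)}\neq 0\}$.

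For the containment $\mathbb{I}_{\PP(W)}(Y)\cdot S\subseteq \mathrm{in}_{\lambda}(\mathbb{I}(X))$, I would take a pure $y$-polynomial $f\in \mathbb{I}_{\PP(W)}(Y)$ of sufficiently high degree (so that saturation effects can be ignored) and exploit that $Y=X\cap \PP(W)$ is cut out scheme-theoretically by $\mathbb{I}(X)+(x_{1},\ldots,x_{n})\subseteq S$ to write $f=g+\sum_{i}x_{i}h_{i}$ with $g\in \mathbb{I}(X)$. The $|\alpha|=0$ slice of $g$ is then exactly $f$, giving $\mathrm{in}_{\lambda}(g)=f$ and hence $f\in \mathrm{in}_{\lambda}(\mathbb{I}(X))$. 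Passing to saturations yields the scheme-theoretic inclusion $X_{0}\subseteq J(Y,\PP(U))$.

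To upgrade this to equality I would use a dimension argument. Flatness of the family $\{\lambda(t)X\}_{t\in\G_{m}}$ gives $\dim X_{0}=\dim X$. Under the dominance hypothesis on $\mathrm{pr}_{\PP(U)}|_{X}$, together with the standing assumption $X\not\subseteq \PP(W)$, the intersection $Y=X\cap\PP(W)$ is proper with $\dim Y=\dim X-\dim U$, so $\dim J(Y,\PP(U))=\dim Y+\dim U=\dim X$. Since $J(Y,\PP(U))$ is irreducible (being the image of the irreducible variety $Y\times\PP(U)\times\PP^{1}$ under $(q,u,[s:t])\mapsto[sq+tu]$), the closed subscheme $X_{0}\subseteq J(Y,\PP(U))$ of equal dimension must coincide with $J(Y,\PP(U))$.

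The main technical obstacle is the proper-intersection claim $\dim Y=\dim X-\dim U$ under dominance: the inequality $\dim Y\ge \dim X-\dim U$ is automatic from the projective dimension formula, but the reverse requires a careful fibre-dimension analysis of $\mathrm{pr}_{\PP(U)}|_{X}$ (equivalently, of the extension of this rational map to $\mathrm{Bl}_{Y}X$) to rule out components of $Y$ of excessive dimension arising from directions along which $X$ degenerates into $\PP(W)$.
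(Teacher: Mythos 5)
Your strategy is genuinely different from the paper's: the paper computes $\mathbb{I}(X_0)=\mathrm{in}_{\lambda}(\mathbb{I}(X))$ in one stroke from a Gr\"obner basis adapted to the splitting $V=U\oplus W$ (using elimination theory and the dominance hypothesis to rule out pure-$x$ basis elements) and then matches the resulting ideal against $\mathbb{I}_{\PP(W)}(Y)\cdot S$, whereas you prove only the containment $\mathbb{I}_{\PP(W)}(Y)\cdot S\subseteq\mathrm{in}_{\lambda}(\mathbb{I}(X))$ and try to force equality by a dimension count. Your first step is correct and is an attractive, Gr\"obner-free way to get $X_0\subseteq J(Y,\PP(U))$: the lift $f=g+\sum_i x_ih_i$ with $g\in\mathbb{I}(X)$ has $g_{(0)}=f$, so $\mathrm{in}_{\lambda}(g)=f$.

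The second step has a genuine gap, precisely at the point you flagged: dominance of $\mathrm{pr}_{\PP(U)}|_X$ does \emph{not} imply $\dim Y\le\dim X-\dim U$, and no fibre-dimension analysis will rescue this from the stated hypotheses. Take $\dim U=\dim W=2$ with coordinates $x_0,x_1$ on $U^{\vee}$ and $y_0,y_1$ on $W^{\vee}$, and let $X=\{x_0y_0+x_1y_1=0\}\subset\PP^3$. The projection to $\PP(U)$ is dominant (the fibre over $[a:b]$ is the line $\{ay_0+by_1=0\}$ minus its intersection with $\PP(W)$), yet $Y=X\cap\PP(W)=\PP(W)$ has dimension $1$ rather than $\dim X-\dim U=0$. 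In this example $X$ is $\lambda$-invariant, so $X_0=X$, while $J(Y,\PP(U))=\PP^3$: the proper-intersection identity you need is additional input, supplied in the paper's applications by the non-degeneracy of $X^j$ inside $\PP(Z^j)$ (where the lemma is applied with ambient space $\PP(Z^j)$ and $U=V^{ij}$), not a consequence of dominance alone. A second, independent problem is your appeal to irreducibility of $J(Y,\PP(U))$: this requires $Y$ irreducible, but in the paper's main application the lemma is invoked with $Y=X^0$ a configuration of $d$ points, so the join is a union of $d$ linear spaces. To conclude $X_0=J$ from your containment you would then need a degree comparison over the components (using $\deg X_0=\deg X$ and $\deg J=\deg Y$), which again presupposes the properness of the intersection.
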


In order to prove Lemma \ref{lemma: flat limits are joins}, we require the following lemma.

\begin{lemma} \label{lemma: flat limit of image over origin}
	Let $f : Y \to Z$ be a morphism between schemes which are flat over $\A^1$. Let $f_0 : Y_0 \to Z_0$ be the base change of $f$ along $\spec \C \stackrel{0}{\to} \A^1$. Then there is an equality of schemes $\mathrm{im}(f_0) = \mathrm{im}(f)_{0}$, where $\mathrm{im}$ denotes the scheme-theoretic image.
\end{lemma}

\begin{proof}
	We have an immersion of schemes $\mathrm{im}(f_0) \subset \mathrm{im}(f)_{0}$, which we claim is an isomorphism. Without loss of generality we may assume $Y = \spec A$ and $Z = \spec B$ are affine, with $f$ corresponding to a homomorphism of $\C[t]$-algebras $\psi : B \to A$. Since $\mathrm{im}(\psi) \subset A$ contains no $t$-torsion, we have $\mathrm{Tor}_1^{\C[t]}(\mathrm{im}(\psi), \C[t]/t) = 0$. As such, applying $- \otimes_{\C[t]} \C[t]/t$ to the short exact sequence $0 \to \ker \psi \to B \to \mathrm{im}(\psi) \to 0$ gives the short exact sequence
	$$ 0 \to \frac{\ker \psi}{t \ker \psi} \to \frac{B}{tB} \to \frac{\mathrm{im}(\psi)}{t \ \mathrm{im}(\psi)} \to 0. $$
	In turn, there is a monomorphism $\ker \psi/t \ker \psi \hookrightarrow \ker \overline{\psi} \subset B/tB$. This induces a surjection
	$$ \frac{B/tB}{\ker \overline{\psi}} \twoheadrightarrow \frac{B/tB}{\ker \psi / t \ker \psi} = \frac{\mathrm{im}(\psi)}{t \ \mathrm{im}(\psi)} = \frac{B/\ker \psi}{t(B/\ker \psi)}. $$
	It follows that there is a closed immersion $\mathrm{im}(f)_0 \hookrightarrow \mathrm{im}(f_0)$, which by construction is inverse to $\mathrm{im}(f_0) \subset \mathrm{im}(f)_{0}$.
\end{proof}

\begin{proof}[Proof of Lemma \ref{lemma: flat limits are joins}]
	Consider the rational map
	$$ \Lambda' : X \times \A^1 \dashrightarrow \PP(V) \times \A^1, \quad ([u+w], t) \mapsto ([u + t^{b-a}w], t), $$
	and let $\mathcal{I} \subset \OO_{X \times \A^1} = \OO_X[t]$ denote the ideal sheaf of the base locus of $\Lambda'$. Blowing up $\mathcal{I}$, we have a diagram
	\[\begin{tikzcd}[ampersand replacement=\&,cramped]
		{B = \mathrm{Bl}_{\mathcal{I}}(X \times \A^1)} \\
		\\
		{X \times \A^1} \&\& {\PP(V) \times \A^1}
		\arrow[from=1-1, to=3-1]
		\arrow["\Lambda", from=1-1, to=3-3]
		\arrow["{\Lambda'}"', dashed, from=3-1, to=3-3]
	\end{tikzcd}\]
	Since $X \times \A^1$ is flat over $\A^1$, the same is true of $B$ (cf. \cite[Appendix B6.7]{fultonintersection}). By \cite[Lemma 2.13]{mumfordstability}, the scheme-theoretic image $\mathrm{im}(\Lambda) \subset \PP(V) \times \A^1$ is flat over $\A^1$, and the flat limit $X_0$ is given by the fibre of $\mathrm{im}(\Lambda)$ over $0 \in \A^1$. By Lemma \ref{lemma: flat limit of image over origin}, this coincides with the scheme-theoretic image of $B_0$. 
	
	Without loss of generality, we may assume that $b - a = 1$ (rescaling the weights of $\lambda$ will not change the flat limit as a subscheme of $\PP(V)$, only the induced $\lambda(\G_m)$-action on it). We have an equality
	$$ \mathcal{I} = \mathcal{I}_Y + (t) $$
	of ideals of $\OO_X[t]$, where $\mathcal{I}_Y$ is the ideal sheaf of $Y \subset X$. As such, $B = \mathrm{Bl}_{\mathcal{I}}(X \times \A^1) = \mathrm{Bl}_{Y \times 0}(X \times \A^1)$ is the total space for the deformation to the normal cone of $Y \subset X$ (cf. \cite[Chapter 5]{fultonintersection}). The fibre of $B$ over $0 \in \A^1$ is given by
	$$ B_0 = \mathrm{Bl}_Y X \cup_{E_Y X} \PP_Y(N_{Y \subset X} \oplus \OO_Y), $$
	noting that since $Y \subset X$ is a regular immersion, the normal cone $C_{Y \subset X} = N_{Y \subset X}$ coincides with the normal bundle, and is a (geometric) vector bundle. Here $\mathrm{Bl}_Y X$ and $\PP_Y(N_{Y \subset X} \oplus \OO_Y)$ intersect along the exceptional divisor $E_Y X$ and the section at infinity respectively. Since $X$ and $Y$ are reduced, the scheme $B_0$ is also reduced, and so $X_0$ must also be reduced (as scheme-theoretic images of reduced schemes are reduced). In other words, the scheme structure on $X_0$ coincides with the reduced induced scheme structure on the underlying space $|X_0|$. 
	
	On the other hand, since $\Lambda_0$ is the identity on $Y \subset B_0$, since $\Lambda_0$ is equivariant with respect to the $\G_m$-actions on $X_0$ and $\PP_Y(N_{Y \subset X} \oplus \OO_Y)$, and since $\Lambda_0$ coincides with the dominant morphism $\mathrm{pr}_{\PP(U)}$ on $\mathrm{Bl}_Y X \setminus E_Y X = X \setminus Y$, we must have $|X_0| = |J(Y, \PP(U))|$. It follows that $X_0 = J(Y, \PP(U))$ as subschemes of $\PP(V)$, completing the proof of the lemma.
\end{proof} 

\begin{lemma} \label{lemma: chow weights of joins}
	Suppose $V = U \oplus W$, and let $Y \subset \PP(W)$ be a closed subvariety of dimension $r$ and degree $d$. Let $J = J(Y, \PP(U))$ be the join of $Y$ and $\PP(U)$ in $\PP(V)$.
	
	Let $\lambda : \G_m \to SL(V)$ be the 1PS defined by declaring that each $u \in U$ has weight $a$ and each $w \in W$ has weight $b$, where $a \dim U + b \dim W = 0$ and where neither $a$ nor $b$ are zero. If $\dim Y = \dim \PP(U)$, assume further that $a + b d \neq 0$. Then the Hilbert--Mumford weight of the point $\Psi_{\mathrm{HC}}(J)$ with respect to $\lambda$ is equal to
	$$ \begin{cases} a (\dim \PP(U) + 1) & \text{if } \dim Y < \dim \PP(U), \\ b d (\dim Y + 1) & \text{if } \dim Y > \dim \PP(U), \\ (a + b d) (\dim Y + 1) & \text{if } \dim Y = \dim \PP(U). \end{cases} $$
\end{lemma}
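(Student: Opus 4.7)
The strategy is to compute $a_J := \mu^{\L_{\mathrm{Chow}}}(\Psi_{\mathrm{HC}}(J), \lambda)$ directly via Proposition~\ref{prop: chow weight general}, by extracting the leading coefficient of the $\lambda$-weight $w([J]_m, \lambda)$ as a polynomial in $m$. The key structural input is the description of the homogeneous ideal of the join: the identity $\mathbb{I}_{\PP(V)}(J) = \mathbb{I}_{\PP(W)}(Y) \cdot S$ (noted immediately before Lemma~\ref{lemma: flat limits are joins}) yields a $\lambda$-equivariant graded isomorphism
\[
  S(J) \;\cong\; \C[x_1, \ldots, x_{\dim U}] \otimes_{\C} S(Y),
\]
where $x_1, \ldots, x_{\dim U}$ is a basis of $U^\vee$ (each of $\lambda$-weight $-a$), and each element of $S(Y)_j$ has $\lambda$-weight $-bj$ (exactly as in the proof of Lemma~\ref{lemma: chow weight with only one weight}). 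In particular $\dim J = r + \dim U$, $\deg J = d$, and
\[
  w([J]_m, \lambda) \;=\; -\sum_{i + j = m} (ai + bj) \binom{i + \dim U - 1}{\dim U - 1} \dim S(Y)_j.
\]

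To extract the leading term in $m$, one may substitute $\dim S(Y)_j = \tfrac{d}{r!} j^r + O(j^{r-1})$ and $\binom{i + \dim U - 1}{\dim U - 1} = \tfrac{i^{\dim U - 1}}{(\dim U - 1)!} + O(i^{\dim U - 2})$ and approximate the sum by its Riemann integral, reducing the computation to Beta integrals of the form $\int_0^1 s^p (1-s)^q \, ds = p!\, q!/(p + q + 1)!$. Equivalently, using the Hilbert series $H_Y(T) = Q(T)/(1 - T)^{r+1}$ of $Y$ (with $Q(1) = d$), the leading coefficient can be read off from the pole of order $\dim J + 2$ at $T = 1$ of the equivariant Hilbert series
\[
  \sum_m w([J]_m, \lambda)\, T^m \;=\; -aT \,\frac{d}{dT}\!\left((1-T)^{-\dim U}\right) H_Y(T) \;-\; bT (1 - T)^{-\dim U}\, H_Y'(T),
\]
from which Proposition~\ref{prop: chow weight general} extracts $a_J$ after multiplication by $-(\dim J + 1)!$.

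The final step is the case analysis. Under each of the three hypotheses on $\dim Y$ versus $\dim \PP(U)$, I would simplify the resulting closed-form expression using the constraint $a \dim U + b \dim W = 0$ to match the three stated formulas. The supplementary assumption $a + bd \neq 0$ in the balanced case $\dim Y = \dim \PP(U)$ is precisely what guarantees that the purported leading term does not vanish identically, so that no finer expansion is required to capture the true leading behaviour. I expect the main technical obstacle to be the combinatorial bookkeeping in the leading-coefficient extraction and the subsequent case-by-case simplification; however, nothing beyond elementary manipulations of Beta-function or binomial identities should be needed.
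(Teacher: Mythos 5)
Your setup coincides with the paper's: both start from $\mathbb{I}_{\PP(V)}(J) = \mathbb{I}_{\PP(W)}(Y)\cdot S$, decompose $S(J)_m \cong \bigoplus_{i+j=m}\C[x_1,\dots,x_{\dim U}]_i\otimes S(Y)_j$, and feed the leading coefficient of $w([J]_m,\lambda)$ into Proposition \ref{prop: chow weight general}. The gap is the step you leave unexecuted: ``simplify the resulting closed-form expression \ldots{} to match the three stated formulas.'' That step will not go through, because your (correctly written) weight formula does not produce the displayed trichotomy. From
$$ w([J]_m,\lambda) = -\sum_{i+j=m}(ai+bj)\binom{i+\dim U-1}{\dim U-1}\dim S(Y)_j, $$
the estimate $\sum_{i+j=m}i^pj^q = \tfrac{p!\,q!}{(p+q+1)!}m^{p+q+1}+O(m^{p+q})$ gives a \emph{single} leading term
$$ w([J]_m,\lambda) = \frac{-d\bigl(a\dim U + b(r+1)\bigr)}{(r+\dim U+1)!}\,m^{\,r+\dim U+1}+O(m^{\,r+\dim U}), $$
and since $\dim J = r+\dim U$, Proposition \ref{prop: chow weight general} returns the uniform value $a_J = d\bigl(a(\dim\PP(U)+1)+b(\dim Y+1)\bigr)$: no case division according to the sign of $\dim Y - \dim\PP(U)$, and a factor of $d$ multiplying \emph{both} summands. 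This genuinely disagrees with the statement. For instance, take $Y$ to be $d\geq 2$ points on the line $\PP(W)\subset\PP^2$ and $\PP(U)$ a disjoint point (so $a=-2b$); then $J$ is a union of $d$ concurrent lines cut out by a single degree-$d$ form $f$ in the $W$-variables, and a direct computation $w(S(J)_k) = w(S_k)-w(fS_{k-d}) = bd\binom{k-d+2}{2}$ (using $w(S_k)=0$ since $\lambda$ lands in $SL(V)$) gives $a_J=-bd=d(a+b)$, whereas the displayed third case claims $a+bd$.

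The reason the paper's own proof lands on the trichotomy is that it evaluates the weight of the summand $\C[x]_i\otimes S(Y)_j$ as $-\bigl(ia\dim\C[x]_i + jb\dim S(Y)_j\bigr)$ rather than $-(ia+jb)\dim\C[x]_i\cdot\dim S(Y)_j$, i.e.\ it drops the cross dimension factors; this creates two competing terms of degrees $\dim U+1$ and $r+2$ in $m$, and the three cases simply record which exponent dominates. Your formula, which keeps those factors, yields one term of degree $\dim J+1$ and cannot be massaged into the three cases without reintroducing that omission. Relatedly, your reading of the hypothesis $a+bd\neq 0$ is off: the vanishing of your leading coefficient is governed by $a\dim U + b(r+1)$, not by $a+bd$. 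So as a proof of the lemma as stated the proposal cannot be completed; carried out honestly it instead establishes the uniform formula above. (For what it is worth, the downstream application in Lemma \ref{lemma: weights of limits} only uses that the weight is independent of the chosen point of $\mathcal{S}_0'$, which the uniform formula still provides.)
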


\begin{proof}
	It is possible to use \cite[Theorem 4.8]{rossthomas} to prove this result, however we elect give a more direct, albeit similar calculation of the Hilbert--Mumford weight. As in the proof of Lemma \ref{lemma: chow weight with only one weight}, pick bases $x_1, \dots, x_n$ and $y_1, \dots, y_m$ for $U^{\vee}$ and $W^{\vee}$ respectively. Let $I_Y = \mathbb{I}_{\PP(W)}(Y) \subset \mathrm{Sym}^{\bullet} W^{\vee} = \C[y_1, \dots, y_m]$, and set
	$$ I_J := \mathbb{I}_{\PP(V)}(J) = I_Y \cdot R \subset R := \C[x_1, \dots, x_n, y_1, \dots, y_m]. $$
	For all $k$ sufficiently large, we have
	\begin{equation} \label{eq: weight space decomposition}
		H^0(J, \OO_{\PP(V)}(k)|_J) = \frac{R_k}{(I_J)_k} \cong \bigoplus_{i+j=k} \C[x_1, \dots, x_n]_i \otimes_{\C} \frac{\C[y_1, \dots, y_m]_j}{(I_Y)_j}
	\end{equation}
	Elements of $\C[x_1, \dots, x_n]_i$ all have weight $-i a$ and elements of $\C[y_1, \dots, y_m]_j/(I_Y)_j$ all have weight $-j b$. As such, by Equation \eqref{eq: weight space decomposition} we have
	\begin{align*}
		w([J]_k, \lambda) &= -\sum_{i=0}^k \left(i a \binom{n+i-1}{i} + (k-i) b \psi_Y(k-i) \right) \\
		&= - \sum_{i=1}^k i \left(a \binom{n+i-1}{i} + b \psi_Y(i) \right),
	\end{align*}
	where $\psi_Y(i) := \dim_{\C} (\C[y_1, \dots, y_m]_i / (I_Y)_i)$. For all sufficiently large $k$, we have 
	$$ \psi_Y(k) = h^0(Y, \OO_{\PP(W)}(k)|_Y) = \frac{d}{r!} k^r + O(k^{r-1}). $$ 
	Therefore, for $k \gg 0$,
	\begin{align*}
		w([J]_k, \lambda) &= - \sum_{i=1}^k i \left(a \binom{n+i-1}{i} + b \left( \frac{d}{r!}i^r + O(i^{r-1}) \right) \right) + O(1) \\
		&= - \sum_{i=1}^k i a \binom{n + i - 1}{i} - b \frac{d}{r!(r+2)} k^{r+2} + O(k^{r+1}) \\
		&= \frac{-b d (r+1)}{(r+2)!} k^{r+2} - \sum_{i=1}^k i a \binom{n + i - 1}{i} + O(k^{r+1}),
	\end{align*}
	where in the second line we used $\sum_{i=1}^k i^p = \frac{k^{p+1}}{p+1} + O(k^p)$. On the other hand we have $\sum_{i=1}^k  i a \binom{n + i - 1}{i} = \frac{a n}{(n+1)!} k^{n+1} +  O(k^{n})$. The lemma then follows from Proposition \ref{prop: chow weight general}.
\end{proof}

\section{Moduli of Hyperplanar Admissible Flags} \label{section: moduli of flags and the local universal property}

\subsection{The Moduli Functor} \label{section: the moduli functor}

Fix a finite dimensional vector space $V$ and positive integers $d, n > 0$.

\begin{defn} \label{defn: complete flag}
	A \emph{(complete) hyperplanar admissible flag} $(\underline{X}, \underline{Z})$ of subschemes of $\PP(V)$ (of \emph{length $n$} and \emph{degree $d$}) is a tuple $\underline{X} = (X^0, X^1, \dots, X^n)$ of closed subschemes $X^0 \subset X^1 \subset \cdots \subset X^{n-1} \subset X^n$, together with a tuple $\underline{Z} = (Z^0, Z^1, \dots, Z^n)$ of linear subspaces $Z^0 \subset Z^1 \subset \cdots \subset Z^{n-1} \subset Z^n = V$ such that:
	\begin{enumerate}[label=(\roman*)]
		\item $\dim X^i = i$ for all $i = 0, 1, \dots, n$,
		\item $\codim_{V} Z^i = n-i$ for each $i = 0, 1, \dots, n$, and
		\item $X^i = X^n \cap \PP(Z^i)$ for each $i = 0, 1, \dots, n$.\footnote{In particular, $\deg X^i = d$ for all $i = 0, \dots, n$.}
	\end{enumerate}  
The flag $(\underline{X}, \underline{Z})$ is \emph{non-degenerate} if each $X^i$ is not contained in a proper linear subspace of $Z^i$. The flag $(\underline{X}, \underline{Z})$ is \emph{non-singular} if each of $X^1, \dots, X^n$ is a non-singular connected projective variety, and if $X^0$ is the disjoint union of $d$ reduced points. The flag $(\underline{X}, \underline{Z})$ is \emph{stable} if $X^0 \subset \PP(Z^0)$ is Chow stable, in the sense that for each proper linear subspace $Z \subset \PP(Z^0)$, Inequality \eqref{eq: chow stability for points} holds strictly, with $W$ taken to be $Z^0$ and $Y$ to be $X^0$.
\end{defn}

\begin{remark}
	If $(\underline{X}, \underline{Z})$ is non-degenerate, the linear subspace $Z^i$ can be recovered from $\underline{X}$ by taking the linear span of $X^i$ in $\PP(V)$.
\end{remark}

Of course, length $n$ hyperplanar admissible flags of subschemes of $\PP(V)$ make sense only when $\dim \PP(V) \geq n$, and if $n = \dim \PP(V)$ then the only possibilities are complete flags of linear subspaces, which are all projectively equivalent. As such whenever we discuss length $n$ hyperplanar admissible flags of subschemes of $\PP(V)$, we assume that $n + 1 < \dim V$. A necessary condition for there to exist non-degenerate, non-singular and stable hyperplanar admissible flags of degree $d$ is that $d > \dim Z^0 = \dim V - n$. Indeed, if $k$ of the points of a Chow stable configuration $X^0 \subset \PP(Z^0)$ have a projective linear span of dimension $\ell \leq k-1$, Inequality \ref{eq: chow stability for points} implies that $d > \frac{k}{\ell + 1} \dim Z^0$. 

On the other hand, if $d > \dim Z^0$ then a generic configuration of points $X^0 \subset \PP(Z^0)$ obtained by intersecting a non-degenerate curve $X^1 \subset \PP(Z^1)$ of degree $d$ with a generic hyperplane $\PP(Z^0) \subset \PP(Z^1)$ will be Chow stable, as such a generic intersection will satisfy the property that any subset of $X^0$ of size $\dim \PP(Z^0)$ spans a hyperplane in $\PP(Z^0)$ by the so-called \emph{general position theorem} of \cite[Chapter III, §1]{geometryofalgcurvesvol1}. The non-triviality of the moduli of non-degenerate, non-singular and stable hyperplane admissible flags for such degrees is then implied by Bertini's theorem, together with the non-degenericity of generic hyperplane sections of irreducible non-degenerate projective subvarieties (cf. \cite[Proposition 18.10]{harrisbasicag}).

\begin{defn}
	The \emph{Hilbert type} $\underline{\Phi} = (\Phi_0, \dots, \Phi_n)$ of the flag $(\underline{X}, \underline{Z})$ is the tuple whose entries are the Hilbert polynomials of the $X^i$:
	$$ \Phi_i(t) = \chi(X^i, \OO_{\PP(V)}(t)|_{X^i}). $$
\end{defn}

In order to have a moduli functor, we need to specify what is meant by a \emph{family} of hyperplanar admissible flags, and when two families are considered to be equivalent.

\begin{defn} \label{defn: family of complete flags}
	Let $S$ be a scheme. A \emph{family of hyperplanar admissible flags} $(\underline{\mathcal{X}}, \underline{\mathcal{Z}}, L)$ of subschemes of $\PP(V)$ (of length $n$, degree $d$ and Hilbert type $\underline{\Phi}$) \emph{over $S$} is given by the following data:
	\begin{enumerate}
		\item an invertible sheaf $L$ on $S$,
		\item a tuple $\underline{\mathcal{X}} = (\mathcal{X}^0, \dots, \mathcal{X}^n)$ of closed subschemes $\mathcal{X}^0 \subset \cdots \subset \mathcal{X}^n \subset \PP_S(V \otimes L)$ which are flat and finitely presented over $S$, and
		\item a tuple $\underline{\mathcal{Z}} = (\mathcal{Z}^0, \dots, \mathcal{Z}^n)$ of subbundles $\mathcal{Z}^0 \subset \cdots \subset \mathcal{Z}^n = V \otimes L$,
	\end{enumerate}
such that the following properties hold:
\begin{enumerate}[label=(\roman*)]
	\item each $\mathcal{Z}^i$ is of corank $n-i$, and
	\item for each geometric point $s \in S$, the fibre $(\underline{\mathcal{X}}, \underline{\mathcal{Z}})_s = (\underline{\mathcal{X}}_s, \underline{\mathcal{Z}}|_s)$ is a hyperplanar admissible flag of subschemes of $\PP(V \otimes L|_s) \cong \PP(V)$ of length $n$, degree $d$ and Hilbert type $\underline{\Phi}$, as in Definition \ref{defn: complete flag}.
\end{enumerate}
The family $(\underline{\mathcal{X}}, \underline{\mathcal{Z}}, L)$ is said to be \emph{non-degenerate} (resp. \emph{non-singular}, \emph{stable}) if for each geometric point $s \in S$, the flag $(\underline{\mathcal{X}}, \underline{\mathcal{Z}})_s$ is non-degenerate (resp. non-singular, stable).
\end{defn}

\begin{defn} \label{defn: equivalence of families}
	Let $(\underline{\mathcal{X}}, \underline{\mathcal{Z}}, L)$ and $(\underline{\mathcal{X}}', \underline{\mathcal{Z}}', L')$ be families of hyperplanar admissible flags of subschemes of $\PP(V)$ over a common base scheme $S$. These families are said to be \emph{(projectively) equivalent} if there exists an isomorphism of invertible sheaves $\phi : L \to L'$ and an element $g \in GL(V)$ such that:
	\begin{enumerate}[label=(\roman*)]
		\item $g \otimes \phi : V \otimes L \stackrel{\sim}{\to} V \otimes L'$ sends $\mathcal{Z}^i$ to $(\mathcal{Z}^i)'$ for all $i = 0, \dots, n$, and
		\item $\PP_S(g \otimes \phi) : \PP_S(V \otimes L) \stackrel{\sim}{\to} \PP_S(V \otimes L')$ sends $\mathcal{X}^i$ to $(\mathcal{X}^i)'$ for all $i = 0, \dots, n$.
	\end{enumerate}
\end{defn}

Given a morphism of schemes $T \to S$, the pullback of a family of hyperplanar admissible flags $(\underline{\mathcal{X}}, \underline{\mathcal{Z}}, L)$ parametrised by $S$ is a family of hyperplanar admissible flags parametrised by $T$, and projectively equivalent families over $S$ pull back to projectively equivalent families over $T$. As such, there is a well-defined moduli functor 
$$ \mathcal{F}_{n,d,\underline{\Phi}}^{\PP(V)} : \mathbf{Sch}^{\mathrm{op}} \to \mathbf{Set} $$ which associates to a scheme $S$ the set of all equivalence classes of families of non-degenerate, non-singular and stable hyperplanar admissible flags of subschemes of $\PP(V)$ of length $n$, degree $d$ and Hilbert type $\underline{\Phi}$ parametrised by $S$.

\begin{remark}
	Suppose $(\underline{X}, \underline{Z})$, $(\underline{X}', \underline{Z}')$ are non-degenerate hyperplanar admissible flags of subschemes of $\PP(V)$ with the closed embeddings $X^n, (X')^n \subset \PP(V)$ being linearly normal, i.e. $H^1(\PP(V), \mathcal{I}_{X^n}(1)) = H^1(\PP(V), \mathcal{I}_{(X')^n}(1)) = 0$, so that the restriction maps $V^{\vee} \to H^0(X^n, \OO_{X^n}(1))$ and $V^{\vee} \to H^0((X')^n, \OO_{{X'}^n}(1))$ are both isomorphisms. Then there exists an isomorphism of polarised varieties $(X^n, \OO_{\PP(V)}(1)|_{X^n}) \cong ((X')^n, \OO_{\PP(V)}(1)|_{(X')^n})$ taking each $X^i$ to $(X')^i$ if and only if $(\underline{X}, \underline{Z})$ and $(\underline{X}', \underline{Z}')$ are projectively equivalent. Indeed, any such isomorphism $(X^n, \OO_{\PP(V)}(1)|_{X^n}) \cong ((X')^n, \OO_{\PP(V)}(1)|_{(X')^n})$ gives rise to an automorphism of the overlying projective space $\PP(V)$, by considering the induced isomorphism on global sections.
\end{remark}

\subsection{The Local Universal Property} \label{section: local universal property}

Let us recall the following terminology from moduli theory.

\begin{defn} \label{defn: cms}
	Let $\mathcal{F} : \mathbf{Sch}_{\C}^{\mathrm{op}} \to \mathbf{Set}$ be a moduli functor.
	\begin{enumerate}
		\item If $S$ is a scheme and if $Y \in \mathcal{F}(S)$, $Y$ is said to have the \emph{local universal property} for $\mathcal{F}$ if for any scheme $S'$, any object $Y' \in \mathcal{F}(S')$ and any point $s' \in S'$, there exists an open neighbourhood $U \subset S'$ of $s'$ and a morphism $f : U \to S$ such that $f^{\ast} Y \cong Y'|_U$.
		\item A pair $(M, \eta)$ consisting of a scheme $M$ and a natural transformation $\eta : \mathcal{F} \to \mathrm{Hom}(-,M)$ is said to \emph{corepresent} $\mathcal{F}$ if for any scheme $M'$ and any natural transformation $\eta' : \mathcal{F} \to \mathrm{Hom}(-,M')$, there exists a unique morphism of schemes $f : M \to M'$ making the diagram
		\[\begin{tikzcd}
			{\mathcal{F}} && {\mathrm{Hom}(-,M)} \\
			& {\mathrm{Hom}(-,M')}
			\arrow["\eta", from=1-1, to=1-3]
			\arrow["{\eta'}"', from=1-1, to=2-2]
			\arrow["{f \circ -}", from=1-3, to=2-2]
		\end{tikzcd}\]
	commute. If in addition $\eta_{\C} : \mathcal{F}(\spec \C) \to M(\C)$ is a bijection, then $(M, \eta)$ is known as a \emph{coarse moduli space} of $\mathcal{F}$.
	\end{enumerate}
\end{defn}

\begin{prop}[\cite{newstead}, Proposition 2.13] \label{prop: existence of corepresentations}
	Let $\mathcal{F} : \mathbf{Sch}_{\C}^{\mathrm{op}} \to \mathbf{Set}$ be a moduli functor. Suppose $Y \in \mathcal{F}(S)$ has the local universal property for $\mathcal{F}$, and suppose moreover that there exists an algebraic group $H$ acting on $S$ with the property that for any two points $s, t \in S$, $Y_s \cong Y_t$ if and only if $s$ and $t$ lie in the same $H$-orbit.
	\begin{enumerate}
		\item A scheme $M$ corepresents $\mathcal{F}$ if and only if $M$ is a categorical quotient of $S$ by $H$.
		\item A categorical quotient $M$ of $S$ by $H$ is a coarse moduli space of $\mathcal{F}$ if and only if $M$ is an orbit space.
	\end{enumerate}
\end{prop}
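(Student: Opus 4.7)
The plan is to reduce both parts of the proposition to a single bijection: natural transformations $\eta' : \mathcal{F} \to \mathrm{Hom}(-, M')$ correspond bijectively to $H$-invariant morphisms $S \to M'$. With this in hand, part (1) is a direct translation of the universal property of corepresentation into that of a categorical quotient, and part (2) follows by comparing $\C$-points.

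First I would construct the bijection. Given a natural transformation $\eta'$, set $f = \eta'_S(Y) : S \to M'$. The orbit hypothesis forces $f(s) = f(h \cdot s)$ for all $\C$-points $s \in S$ and $h \in H$, since $Y_s \cong Y_{h \cdot s}$ and $\eta'_{\spec \C}$ must send isomorphic flags to the same point of $M'$. Promoting this to scheme-theoretic $H$-invariance uses that the family $Y$ is $H$-equivariant in the sense that its pullbacks along the action map $\sigma : H \times S \to S$ and the projection $p_2 : H \times S \to S$ are equivalent in $\mathcal{F}(H \times S)$ --- in which case naturality of $\eta'$ yields $f \circ \sigma = f \circ p_2$. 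Conversely, given an $H$-invariant $g : S \to M'$ and any $Y' \in \mathcal{F}(S')$, cover $S'$ by opens $U_i$ admitting classifying morphisms $\varphi_i : U_i \to S$ with $\varphi_i^{\ast} Y \cong Y'|_{U_i}$ (by the local universal property), and define $\eta'_{S'}(Y')|_{U_i} := g \circ \varphi_i$. On overlaps $U_i \cap U_j$, at each point $u$ the classifiers $\varphi_i(u)$ and $\varphi_j(u)$ lie in a common $H$-orbit because $Y_{\varphi_i(u)} \cong Y'_u \cong Y_{\varphi_j(u)}$, so $H$-invariance of $g$ forces agreement on points, which lifts to schemes using separatedness of $M'$. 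These local pieces glue to a well-defined morphism $\eta'_{S'}(Y') : S' \to M'$, and naturality in $S'$ is then routine.

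Once this bijection is established, part (1) is immediate. The condition that $M$ corepresents $\mathcal{F}$ says exactly that every $\eta' : \mathcal{F} \to \mathrm{Hom}(-, M')$ factors uniquely through a distinguished $\eta : \mathcal{F} \to \mathrm{Hom}(-, M)$; under the bijection this translates precisely to saying that every $H$-invariant morphism $S \to M'$ factors uniquely through the $H$-invariant morphism $\eta_S(Y) : S \to M$, which is the universal property of the categorical quotient. For part (2), observe that by the local universal property every isomorphism class in $\mathcal{F}(\spec \C)$ is represented by some fibre $Y_s$, $s \in S(\C)$, and the orbit hypothesis yields a bijection $\mathcal{F}(\spec \C) \cong S(\C)/H(\C)$. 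Under this identification, $\eta_{\spec \C}$ agrees with the map on $\C$-points induced by $S \to M$ after quotienting by $H$, so $\eta_{\spec \C}$ is a bijection precisely when $M$ is an orbit space.

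The main obstacle is the upgrade from point-wise to scheme-theoretic $H$-invariance of $f = \eta'_S(Y)$ noted above, which is not literally forced by the stated point-wise hypothesis alone. In every application of this proposition in the present paper this obstacle evaporates: the family $Y$ arises from the universal family on a Hilbert or flag parameter scheme, and the group $H$ acts on $S$ by restricting an ambient $GL(V)$-action under which the universal family is manifestly equivariant, so the required equivalence between $\sigma^{\ast} Y$ and $p_2^{\ast} Y$ holds tautologically.
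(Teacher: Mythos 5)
The paper offers no proof of this proposition, citing Newstead's Proposition 2.13 directly, and your argument reconstructs exactly the standard proof from that reference: the bijection between natural transformations $\mathcal{F} \to \mathrm{Hom}(-,M')$ and $H$-invariant morphisms $S \to M'$, from which (1) is the universal property of the categorical quotient and (2) is the comparison on $\C$-points. The caveat you flag --- upgrading pointwise $H$-invariance of $\eta'_S(Y)$ to scheme-theoretic invariance --- is indeed the one genuine subtlety in transplanting Newstead's variety-level statement to the scheme setting used here, and you correctly observe that it is harmless in this paper because the universal family over $\mathcal{S}'$ is manifestly $GL(V)$-equivariant.
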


Let $n, d$ be positive integers and let $V$ be a complex vector space of dimension $\dim V > n + 1$. We now exhibit a family which has the local universal property for the moduli functor $\mathcal{F} = \mathcal{F}_{n,d,\underline{\Phi}}^{\PP(V)}$. Let $\mathcal{H}_i = \mathrm{Hilb}(\PP(V), \Phi_i)$ be the Hilbert scheme parametrising closed subschemes of $\PP(V)$ with Hilbert polynomial $\Phi_i$, and let $\Gr_i = \Gr_i(V)$ be the Grassmannian parametrising subspaces of $V$ of \emph{codimension} $n-i$. Let
$$ \mathcal{S} := \prod_{i=0}^n \mathcal{H}_i \times \prod_{j=0}^n \Gr_j. $$
We endow $\mathcal{S}$ with the diagonal action of $GL(V)$ induced by the natural $GL(V)$-actions on the $\mathcal{H}_i$ and $\Gr_j$. Over $\mathcal{H}$ are the following universal objects:
\begin{enumerate}[label=(\roman*)]
	\item $\mathcal{S}$-flat closed subschemes of finite presentation $\mathcal{Y}^i \subset \PP(V) \times \mathcal{S}$ for each $i = 0, \dots, n$, with $\mathcal{Y}^i$ corresponding to the universal family over $\mathcal{H}_i$.
	\item Corank $n-j$ subbundles $\mathcal{W}^j \subset V \otimes \OO_{\mathcal{S}}$ for each $j = 0, \dots, n$, with $\mathcal{W}^j$ corresponding to the universal subbundle over $\Gr_j$.
\end{enumerate}

\begin{lemma} \label{lemma: universal property flag parameter space}
	There exists a locally closed, $GL(V)$-invariant subscheme $\mathcal{S}' \subset \mathcal{S}$ with the following universal property: suppose $S$ is a scheme and $S \to \mathcal{S}$ is a morphism, corresponding to the closed subschemes $\mathcal{Y}_S^i \subset \PP(V) \times S$ and the locally free subsheaves $\mathcal{W}_S^j \subset V \otimes \OO_S$. Then $S \to \mathcal{S}$ factors through $\mathcal{S}'$ if and only if for each geometric point $s \in S$:
	\begin{enumerate}
		\item $\mathcal{Y}^0_s \subset \cdots \subset \mathcal{Y}^n_s$ as subschemes of $\PP(V)$, and $\mathcal{W}^0_s \subset \cdots \subset \mathcal{W}^n_s = V$ as subspaces of $V$;
		\item $\mathcal{Y}^i_s = \mathcal{Y}^n_s \cap \PP(\mathcal{W}^i_s)$ for all $i = 0, \dots, n$;
		\item $\dim \mathcal{Y}^i_s = i$ for all $i = 0, \dots, n$;
		\item $\deg \mathcal{Y}_s^i = d$ for all $i = 0, \dots, n$;
		\item $\mathcal{Y}^i_s \subset \PP(\mathcal{W}^i_s)$ is a non-degenerate, non-singular, connected projective subvariety for each $i = 1, \dots, n$; and
		\item $\mathcal{Y}^0_s \subset \PP(\mathcal{W}^0_s)$ is non-degenerate, $\mathcal{Y}^0_s$ is the disjoint union of $d$ reduced points and $\mathcal{Y}^0_s$ is Chow stable.
	\end{enumerate}
\end{lemma}

\begin{proof}
	The first two conditions are incidence correspondences; these conditions cut out a closed subscheme of $\mathcal{S}$. After first imposing the first two conditions, the remaining conditions are all open in flat families, so define an open subscheme $\mathcal{S}'$ of this closed subscheme.
\end{proof}

Let $(\underline{\mathcal{Y}}', \underline{\mathcal{W}}')$ be the restriction of $(\underline{\mathcal{Y}}, \underline{\mathcal{W}})$ to $\mathcal{S}' \subset \mathcal{S}$. Note that $(\underline{\mathcal{Y}}', \underline{\mathcal{W}}', \OO_{\mathcal{S}'})$ is a family of non-degenerate, non-singular and stable hyperplanar admissible flags of subschemes of $\PP(V)$ of length $n$, degree $d$ and Hilbert type $\underline{\Phi}$ over $\mathcal{S}'$.

\begin{lemma} \label{lemma: local universal property}
	The family $(\underline{\mathcal{Y}}', \underline{\mathcal{W}}', \OO_{\mathcal{H}'})$ has the local universal property for the moduli functor $\mathcal{F} = \mathcal{F}_{n,d,\underline{\Phi}}^{\PP(V)}$.
\end{lemma}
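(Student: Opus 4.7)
The plan is to construct, for any family $Y' = (\underline{\mathcal{X}}, \underline{\mathcal{Z}}, L) \in \mathcal{F}(S')$ and any point $s' \in S'$, a classifying morphism $f : U \to \mathcal{S}'$ on a suitable Zariski-open neighbourhood $U$ of $s'$ such that $f^{\ast}(\underline{\mathcal{Y}}', \underline{\mathcal{W}}', \OO_{\mathcal{S}'})$ is equivalent to $Y'|_{U}$ in the sense of Definition \ref{defn: equivalence of families}. The construction of $f$ proceeds by the standard recipe: first trivialise the line bundle, then use the universal properties of the Hilbert schemes and Grassmannians factor-by-factor, then verify that the resulting morphism lands in the locally closed subscheme $\mathcal{S}' \subset \mathcal{S}$.

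First, I would shrink $S'$ to an affine open neighbourhood $U$ of $s'$ on which $L$ admits a trivialisation $\phi : L|_U \xrightarrow{\sim} \OO_U$. Via $\phi$ and the induced isomorphism $\PP_U(V \otimes L|_U) \cong \PP(V) \times U$, the subbundles $\mathcal{Z}^i|_U$ become corank $n-i$ subbundles of $V \otimes \OO_U$ and the closed subschemes $\mathcal{X}^i|_U$ become $U$-flat finitely presented closed subschemes of $\PP(V) \times U$ with Hilbert polynomial $\Phi_i$. By the universal property of the Hilbert scheme, each $\mathcal{X}^i|_U$ determines a unique morphism $f_i^{\mathcal{H}} : U \to \mathcal{H}_i$ with $(f_i^{\mathcal{H}})^{\ast} \mathcal{Y}^i \cong \mathcal{X}^i|_U$, and by the universal property of the Grassmannian each $\mathcal{Z}^i|_U$ determines a unique morphism $f_i^{\Gr} : U \to \Gr_{n-i}$ pulling the universal subbundle back to $\mathcal{Z}^i|_U$. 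Assembling these yields a morphism $f : U \to \mathcal{S}$ with $f^{\ast}(\underline{\mathcal{Y}}, \underline{\mathcal{W}}) \cong (\underline{\mathcal{X}}, \underline{\mathcal{Z}})|_U$.

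Next I would verify that $f$ factors through the locally closed subscheme $\mathcal{S}'$. The conditions defining $\mathcal{S}'$ split into closed conditions (flag inclusions $\mathcal{Y}^i_s \subset \mathcal{Y}^{i+1}_s$, intersection relations $\mathcal{Y}^i_s = \mathcal{Y}^n_s \cap \PP(\mathcal{W}^i|_s)$, inclusions of subbundles, and the dimension conditions $\dim \mathcal{Y}^i_s = i$) and open conditions (non-degeneracy, non-singularity and connectedness of the higher-dimensional $\mathcal{Y}^i_s$, reducedness of $\mathcal{Y}^0_s$, and Chow stability of $\mathcal{Y}^0_s$ by Proposition \ref{prop: chow stability dim 0} together with openness of GIT stability). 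The closed conditions are already built into the definition of a family of hyperplanar admissible flags (Definition \ref{defn: family of complete flags}), so $f$ factors scheme-theoretically through the closure of $\mathcal{S}'$ in $\mathcal{S}$; the open conditions hold on every geometric fibre of $Y'|_U$ by hypothesis, so the image of $f$ lies set-theoretically in $\mathcal{S}'$, giving the desired factorisation $f : U \to \mathcal{S}'$.

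Finally, I would observe that $f^{\ast}(\underline{\mathcal{Y}}', \underline{\mathcal{W}}', \OO_{\mathcal{S}'}) = (\underline{\mathcal{X}}|_U, \underline{\mathcal{Z}}|_U, \OO_U)$ is equivalent to $(\underline{\mathcal{X}}, \underline{\mathcal{Z}}, L)|_U$ via the pair $(g, \phi) = (\mathrm{id}_V, \phi)$ of Definition \ref{defn: equivalence of families}, which exhibits the local universal property. The only real obstacle is bookkeeping: one must match each defining condition of $\mathcal{S}'$ against the corresponding clause in the definition of a non-degenerate, non-singular and stable family, and must carefully track how the twist by $L$ disappears after trivialisation so that the pulled-back family is genuinely equivalent (not merely isomorphic) to the original. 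Both tasks are straightforward once the local trivialisation has been fixed.
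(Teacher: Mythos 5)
Your proposal is correct and follows essentially the same route as the paper, which simply trivialises $L$ on a neighbourhood and then invokes the universal property of $\mathcal{S}'$; your write-up merely unpacks that universal property into its Hilbert-scheme and Grassmannian components and checks the factorisation through the locally closed subscheme $\mathcal{S}' \subset \mathcal{S}$ explicitly.
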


\begin{proof}
	Given a family $(\underline{\mathcal{X}}, \underline{\mathcal{Z}}, L)$ over a scheme $S$ and a point $s \in S$, by passing to an open neighbourhood $U \subset S$ of $s$ where $L$ is trivial we may assume without loss of generality that $L = \OO_S$. The result then follows from Lemma \ref{lemma: universal property flag parameter space} together with the universal properties of the schemes $\mathcal{H}_i$ and $\mathrm{Gr}_j$.
\end{proof}

\begin{cor} \label{cor: parameter space to be quotiented}
	The moduli functor $\mathcal{F}$ is corepresentable if and only if there exists a categorical quotient $q : \mathcal{S}' \to \mathcal{S}' \sslash SL(V)$ of $\mathcal{S}'$ by $SL(V)$, and that a coarse moduli space of $\mathcal{F}$ exists if and only if $q$ is an orbit space morphism.
\end{cor}

\begin{proof}
	If $s_0$, $s_1$ are points of $\mathcal{S}'$ and if $(\underline{\mathcal{Y}}_0, \underline{\mathcal{W}}_0)$, $(\underline{\mathcal{Y}}_1, \underline{\mathcal{W}}_1)$ are the restrictions of the universal family $(\underline{\mathcal{Y}}, \underline{\mathcal{W}})$ to $s_0$ and $s_1$ respectively, the flags $(\underline{\mathcal{Y}}_0, \underline{\mathcal{W}}_0)$ and $(\underline{\mathcal{Y}}_1, \underline{\mathcal{W}}_1)$ are equivalent if and only if $s_0$ and $s_1$ lie in the same $SL(V)$-orbit in $\mathcal{S}'$. The result then follows by Proposition \ref{prop: existence of corepresentations}.
\end{proof}

\section{Non-Reductive Geometric Invariant Theory} \label{section: NRGIT}

\subsection{The $\hat{U}$-Theorem}

Here we give a summary of the non-reductive GIT (NRGIT) machinery used in this paper. Let $H = U \rtimes L$ be a linear algebraic group with unipotent radical $U$ and where $L$ is a reductive Levi subgroup. We are interested in groups $H$ with an \emph{(internally) graded} unipotent radical.

\begin{defn}
	An \emph{(internal) grading of the unipotent radical $U$ of $H$} is a 1PS $\lambda : \G_m \to Z(L)$ such that, under the conjugation action, $\lambda(\G_m)$ acts on $\Lie U$ with strictly positive weights. If $\lambda$ is a choice of a grading 1PS, we denote
	$$ \hat{U} := U \rtimes \lambda(\G_m) \quad \text{and} \quad \overline{L} := L/\lambda(\G_m). $$
\end{defn}

Fix an internal grading $\lambda : \G_m \to Z(L)$ of the unipotent radical. Let $Y$ be a projective scheme acted on by $H$, and let $\L$ be an ample linearisation for this action.	Let $\omega_{\min} = \omega_0 < \omega_1 < \cdots < \omega_{\max}$ be the weights with which $\lambda(\G_m)$ acts on the fibres of $\L^{\vee}$ over points of the connected components of the fixed-point locus $Y^{\lambda(\G_m)}$ for the action of $\lambda(\G_m)$ on $Y$. For simplicity, we will assume that there is a dense open subscheme $Y^{\circ} \subset Y$ such that for all points $y \in Y^{\circ}$, $\lambda(\G_m)$ acts on $\L^{\vee}|_{p(y)}$ with weight $\omega$, where $p(y) = \lim_{t \to 0} \lambda(t) \cdot y$; note that this implies $\omega = \omega_{\min}$.

Choose a positive integer $N$ such that $\L^N$ is very ample, so that there is a closed immersion $Y \subset \PP(W)$, where $W = H^0(Y, \L^N)^{\vee}$. Let $W_{\min}$ be the weight space of weight $N\omega_{\min}$ in $W$. We introduce the following loci:
\begin{enumerate}[label=(\roman*)]
	\item The \emph{$\lambda$-minimal weight space} is the closed subscheme of $Y$
	$$ Z_{\min} = Z(Y, \lambda)_{\min} := Y \cap \PP(W_{\min}). $$ 
	The scheme $Z_{\min}$ admits a natural $L$-action. The set of closed points of $Z_{\min}$ is given by $\left\{y \in Y^{\lambda(\G_m)} : \lambda(\G_m) \text{ acts on } \L^{\vee}|_y \text{ with weight } N\omega_{\min} \right\}$.
	\item The \emph{$\lambda$-attracting open set} is the open Białynicki-Birula stratum
	$$ Y_{\min} = Y(\lambda)_{\min} = \left\{y \in Y : \lim_{t \to 0} \lambda(t) \cdot y \in Z_{\min} \right\}. $$
	This is naturally acted on by $H$.
	\item The \emph{$\lambda$-retraction} is the morphism $p = p_{\lambda} : Y_{\min} \to Z_{\min}$ given (on closed points) by $p(y) = \lim_{t \to 0} \lambda(t) \cdot y$. This morphism is equivariant with respect to the quotient $H \to L$ (cf. \cite[Lemma 4.16]{bkmomentmaps}); in particular, any point of $U Z_{\min}$ is fixed by $p$.
	\item The \emph{$U$-very stable locus}\footnote{Note that the terminology here is non-standard and is not present in e.g. \cite{bdhkconstructing} \cite{bdhkapplications} \cite{bkmomentmaps} \cite{hoskinsjackson}. Here we wish to emphasise that we only consider points whose $\lambda(\G_m)$-limits have trivial $U$-stabilisers, and do not require that all points of $Z_{\min}$ or $Z_{\min}^{\overline{L}-s}$ have trivial $U$-stabilisers.} is the open subscheme of $Y$ whose set of points is
	$$ Y_{\min}^{U-vs} := \{ y \in Y_{\min} : \stab_U(p(y)) = \{e\} \}. $$
	\item The \emph{$H$-very stable locus} is the open subscheme of $Y$ whose set of points is
	$$ Y_{\min}^{H-vs} := \{ y \in Y_{\min}^{U-vs} : p(y) \in Z_{\min}^{\overline{L}-s} \} \setminus U Z_{\min}. $$
\end{enumerate}

\begin{remark}
	That $Y_{\min}^{U-vs}$ is open in $Y_{\min}$ follows from the semicontinuity of dimensions of $U$-stabiliser dimensions. In turn, the same argument given in \cite[Proof of Theorem 4.21, Page 21]{bkmomentmaps} shows that the $U$-sweep of $\{z \in Z_{\min} : \dim \stab_U(z) = 0\}$ is closed in $Y_{\min}^{U-vs}$, whence $Y_{\min}^{H-vs}$ is an open subscheme of $Y_{\min}$. $Y_{\min}^{U-vs}$ is invariant under the action of $U$, and $Y_{\min}^{H-vs}$ is invariant under the action of $H$.
\end{remark}

We also introduce the notion of an \emph{adapted linearisation}.

\begin{defn}
	The ample linearisation $\L$ on $Y$ is said to be \emph{adapted} for the action of $\hat{U} \subset H$ if $\omega_{\min} = \omega_0 < 0 < \omega_1 < \cdots < \omega_{\max}$.
\end{defn}

Note that any positive integer power of an adapted linearisation is adapted. Moreover, the schemes $Y_{\min}^{U-vs}$ and $Y_{\min}^{H-vs}$ only depend on $\L$ and not the very ample linearisation $\L^N$. As such, when trying to form quotients of the schemes $Y_{\min}^{U-vs}$ and $Y_{\min}^{H-vs}$, nothing is lost by assuming $\L$ itself is very ample and that $N = 1$.

The version of the \emph{$\hat{U}$-theorem} from NRGIT we will make use of is the following result.

\begin{theorem} \label{theorem: U-hat in our setting}
	Let $H = U \rtimes L$ be a linear algebraic group with internal grading $\lambda : \G_m \to Z(L)$. Let $Y$ be a projective scheme acted on by $H$ and let $\L$ be a very ample adapted linearisation on $Y$. Assume that there is a dense open subscheme $Y^{\circ} \subset Y$ such that for all points $y \in Y^{\circ}$, $\lambda(\G_m)$ acts on $\L^{\vee}|_{p(y)}$ with common weight $\omega$. Assume in addition that $Y_{\min}^{U-vs}$ is non-empty.
	\begin{enumerate}
		\item There exists a locally trivial $U$-quotient $q_U : Y_{\min}^{U-vs} \to Y_{\min}^{U-vs}/U$. Moreover, there exists a positive integer $m > 0$ such that $Y_{\min}^{U-vs}/U$ admits a locally closed immersion into $\PP((H^0(Y, \L^m)^U)^{\vee})$ in such a way that $q_U$ is induced by the linear projection $\PP(H^0(Y, \L^m)^{\vee}) \dashrightarrow \PP((H^0(Y, \L^m)^{U})^{\vee})$.
		\item Assume further that $Y_{\min}^{H-vs}$ is non-empty. Fixing such an integer $m$, let $Q$ denote the closure of $Y_{\min}^{U-vs}/U$ in $\PP((H^0(Y, \L^m)^{U})^{\vee})$. Then there exists a map
		\[\begin{tikzcd}
			{q : Y_{\min}^{U-vs}} && {Y_{\min}^{U-vs}/U} && {Q \sslash L}
			\arrow["{q_U}", from=1-1, to=1-3]
			\arrow["{q_L}", dashed, from=1-3, to=1-5]
		\end{tikzcd}\]
		with the following properties:
		\begin{enumerate}
			\item The locus $Y_{\min}^{H-vs}$ is contained in the domain of $q$, and the restriction $q : Y_{\min}^{H-vs} \to q(Y_{\min}^{H-vs})$ is a geometric $H$-quotient.
			\item There exists an integer $M \geq m$ and a locally closed immersion of $q(Y_{\min}^{H-vs})$ into $\PP((H^0(Y, \L^M)^H)^{\vee})$ in such a way that $q$ is induced by the linear projection $\PP(H^0(Y, \L^M)^{\vee}) \dashrightarrow \PP((H^0(Y, \L^M)^H)^{\vee})$.
		\end{enumerate}
	\end{enumerate}
\end{theorem}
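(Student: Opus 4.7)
The plan is to realise this theorem as a packaging of the $\hat{U}$-theorem from non-reductive GIT, as developed in \cite{bdhkconstructing, bdhkapplications, bkmomentmaps}. The hypotheses that $\L$ is very ample and adapted and that $\stab_U(p(y)) = \{e\}$ holds on $Y_{\min}^{U-s}$ together match the setup of those papers, so in essence the proof consists of invoking the relevant statements there and translating between conventions. The assumption on the dense open subscheme $Y^{\circ}$ with common $\lambda$-weight at the $\lambda$-retraction ensures that the weight $\omega_{\min}$ is attained generically, which is what allows the $\hat{U}$-quotient procedure to apply without first needing to blow up.

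For part (1), the plan is to appeal directly to the existence of a locally trivial $U$-quotient on the $\hat{U}$-stable locus established in the NRGIT papers. Adaptedness of $\L$ ensures that $U$-invariant sections in $H^0(Y, \L^m)^U$ separate $U$-orbits on $Y_{\min}^{U-s}$ for all sufficiently divisible $m$, giving a rational map $Y \dashrightarrow \PP((H^0(Y, \L^m)^U)^{\vee})$ defined on $Y_{\min}^{U-s}$. The image of this map is locally closed, and its restriction $q_U$ is the required $U$-quotient, locally trivial by the standard slice construction that identifies an open neighbourhood of each $U$-orbit with $U \times S$ for a local transversal $S$. The explicit description of $q_U$ as a linear projection from a single graded piece is then immediate from the construction.

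For part (2), the idea is to take the closure $Q$ in $\PP((H^0(Y, \L^m)^U)^{\vee})$, which inherits an ample linearisation and an action of $L$ descending from the $H$-action on $Y$, and then form a reductive GIT quotient $Q \sslash L$. Since the grading one-parameter subgroup $\lambda$ acts with a fixed weight on the graded piece $H^0(Y, \L^m)^U$, after an appropriate adjustment of the linearisation the action factors through $\overline{L} = L/\lambda(\G_m)$. Applying reductive GIT (with Lemma \ref{lemma: restricting geometric quotients reductive} to restrict to the appropriate locally closed subscheme) produces a geometric $L$-quotient on $Q^{L-s}$. The plan is then to show that $q_U(Y_{\min}^{H-s}) \subset Q^{L-s}$, using the Hilbert--Mumford criterion combined with the definition of $Z_{\min}^{\overline{L}-s}$, so that $q = q_L \circ q_U$ restricts to a geometric $H$-quotient on $Y_{\min}^{H-s}$. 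Picking $M$ a sufficiently divisible multiple of $m$ and passing to $H$-invariants within $H^0(Y, \L^M)$ realises $q(Y_{\min}^{H-s})$ as a locally closed subscheme of $\PP((H^0(Y, \L^M)^H)^{\vee})$.

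The main obstacle will be verifying that the composite $q$ defines a \emph{geometric} $H$-quotient on $Y_{\min}^{H-s}$, and not merely a categorical one. There are two subtleties here: first, one must show that no point of $Y_{\min}^{H-s}$ is mapped into the boundary of $Y_{\min}^{U-s}/U$ in $Q$, which is precisely where the removal of $UZ_{\min}$ from $Y_{\min}^{H-s}$ comes in (points whose $\lambda$-limit lies in $UZ_{\min}$ can collapse $U$-orbits in the closure); second, one must check that distinct $H$-orbits in $Y_{\min}^{H-s}$ map to distinct $L$-orbits in $Q$, which follows from the semi-direct product structure $H = U \rtimes L$ once orbit separation for $U$ is established from part (1). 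Both of these verifications reduce to a careful comparison with the conclusions of \cite[Theorem 2.1]{bkmomentmaps} under the conventions adopted in this paper.
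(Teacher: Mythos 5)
Your proposal follows essentially the same route as the paper: first form the locally trivial $U$-quotient of $Y_{\min}^{U-s}$ via the enveloping-quotient results of the NRGIT literature, realised as a linear projection onto $\PP((H^0(Y,\L^m)^U)^{\vee})$, and then run reductive GIT for the residual $L$-action on the closure $Q$, checking via the Hilbert--Mumford criterion (the weight-polytope argument of Hoskins--Jackson) that $q_U(Y_{\min}^{H-s})$ lands in the $L$-stable locus, with the restriction lemmas for geometric quotients handling the passage to locally closed subschemes. The only caveat is a small imprecision in your discussion of obstacles: the relevant condition is that the point itself not lie in $UZ_{\min}$ (every point of $Y_{\min}$ has its $\lambda$-limit in $Z_{\min}$), but this does not affect the correctness of the plan.
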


\begin{remark}
	It follows from Lemmas \ref{lemma: restricting geometric quotients reductive} and \ref{lemma: restricting geometric quotients locally trivial} that if $Z$ is any $H$-invariant locally closed subscheme of $Y_{\min}^{H-vs}$ then $q$ restricts to give a geometric $H$-quotient $Z \to q(Z)$, with $q(Z) = Z/H$ a locally closed subscheme of $q(Y) = Y_{\min}^{H-vs}/H$.
\end{remark}

\begin{proof}[Proof of Theorem \ref{theorem: U-hat in our setting}]
	This theorem can be deduced using the results presented in \cite{qiaothesis}, however we will give another proof.
	
	We begin by first forming the quotient by $U$. Let $W = H^0(Y, \L)^{\vee}$. As $Y \subset \PP(W)$ is not contained in any proper linear subspace, the minimal weight for the $\lambda(\G_m)$-action on the fibres of $\OO_{\PP(W)}(-1)$ over points in $\PP(W)^{\lambda(\G_m)}$ is also equal to $\omega$, so there is an equality of schemes $Y_{\min}^{U-vs} = Y \cap \PP(W)_{\min}^{U-vs}$. By \cite[Proposition 4.26]{bkmomentmaps}, there is a locally trivial $U$-quotient $\PP(W)_{\min}^{U-vs} \to \PP(W)_{\min}^{U-vs}/U$, given by restricting an enveloping quotient map. Moreover, from the proof of \cite[Proposition 3.1.19]{bdhkconstructing} there exists a positive integer $m > 0$ such that for all positive integer multiples $m'$ of $m$, $\PP(W)_{\min}^{U-vs}/U$ admits a locally closed immersion to $\PP((H^0(\PP(W), \OO_{\PP(W)}(m'))^U)^{\vee})$, with the composite $\PP(W)_{\min}^{U-vs} \to \PP(W)_{\min}^{U-vs}/U \to \PP((H^0(\PP(W), \OO_{\PP(W)}(m'))^U)^{\vee})$ being the morphism obtained by restricting the rational map between projective spaces associated to the inclusion $H^0(\PP(W), \OO_{\PP(W)}(m'))^{U} \subset H^0(\PP(W), \OO_{\PP(W)}(m'))$.
	
	By Lemma \ref{lemma: restricting geometric quotients locally trivial}, this quotient restricts to give a locally trivial quotient $q_U : Y_{\min}^{U-vs} \to Y_{\min}^{U-vs}/U$, with $Y_{\min}^{U-vs}/U$ a closed subscheme of $\PP(W)_{\min}^{U-vs}/U$. After replacing $m$ with a larger multiple if necessary (so that the natural map $\mathrm{Sym}^m H^0(Y, \L) \to H^0(Y, \L^m)$ is onto), the closed immersion $Y \to \PP(H^0(\PP(W), \OO_{\PP(W)}(m))^{\vee})$ factors through $\PP(H^0(Y, \L^m)^{\vee})$. In addition, the composite $H^0(\PP(W), \OO_{\PP(W)}(m))^U \stackrel{\subset}{\to} H^0(\PP(W), \OO_{\PP(W)}(m)) \to H^0(Y, \L^{m})$ factors through the subspace $H^0(Y, \L^{m})^U \subset H^0(Y, \L^{m})$. It follows that the composite $Y_{\min}^{U-vs}/U \to \PP(W)_{\min}^{U-vs}/U \to \PP((H^0(\PP(W), \OO_{\PP(W)}(m))^U)^{\vee})$ factors through the projective space $\PP((H^0(Y, \L^m)^U)^{\vee})$, and so $q_U$ fits into a diagram of the form
	\[\begin{tikzcd}[ampersand replacement=\&]
		{Y_{\min}^{U-vs}} \&\& {\PP(H^0(Y, \L^{m})^{\vee})} \\
		\\
		{Y_{\min}^{U-vs}/U} \&\& {\PP((H^0(Y, \L^{m})^U)^{\vee})}
		\arrow["{H^0(Y, \L^{m})^U \subset H^0(Y, \L^{m})}", dashed, from=1-3, to=3-3]
		\arrow["{q_U}"', from=1-1, to=3-1]
		\arrow[from=1-1, to=1-3]
		\arrow[from=3-1, to=3-3]
	\end{tikzcd}\]
	whose horizontal arrows are given by locally closed immersions. This completes the proof of the first part of the theorem.
	
	The proof of the second part of the theorem now boils down to reductive GIT for the residual action of $L$. Let $Q$ be the closure of $Y_{\min}^{U-vs}/U$ in $\PP((H^0(Y, \L^m)^{U})^{\vee})$, and endow $Q$ with the very ample linearisation $\L_Q$ obtained by taking the restriction of the $\OO(1)$ of $\PP((H^0(Y, \L^m)^{U})^{\vee})$. Choosing $s > 0$ sufficiently large such that the finitely generated graded algebra $R(Q, \L_Q^s)^{L} = \bigoplus_{d \geq 0} H^0(Q, \L_Q^{sd})^{L}$ is generated in degree 1 and such that the natural morphism
	\[\begin{tikzcd}[ampersand replacement=\&]
		{\mathrm{Sym}^rH^0(Y, \L^m)^U} \&\& {H^0(Q, \L_Q^{r})} \&\& 0
		\arrow[from=1-1, to=1-3]
		\arrow[from=1-3, to=1-5]
	\end{tikzcd}\]
	is a surjection for all $r \geq s$, standard results of reductive GIT yields that the map
	$$ q_{L} : Q \dashrightarrow Q \sslash L = \mathrm{Proj} R(Q, \L_Q^s)^{L} \subset \PP(( H^0(Q, \L_Q^{s})^{L} )^{\vee}) $$ 
	induced by the inclusion $R(Q, \L_Q^s)^{L} \subset R(Q, \L_Q^s)$ restricts to give a geometric $L$-quotient of the open stable locus $Q^{L-s}(\mathcal{L}_Q)$. Increasing $s$ if necessary, the above morphisms give rise to surjections $H^0(Y, \L^{rm})^U \to H^0(Q, \L_Q^r)$. As taking invariants of the reductive group $L$ is exact, there are induced surjections $(H^0(Y, \L^{rm})^U)^L = H^0(Y, \L^{rm})^H \to H^0(Q, \L_Q^r)^L$, so we have a closed immersion $\PP(( H^0(Q, \L_Q^{s})^{L} )^{\vee}) \to \PP((H^0(Y, \L^{sm})^H)^{\vee})$ such that the composite $Q \sslash L \to \PP((H^0(Y, \L^{sm})^{H})^{\vee})$ fits into the following diagram:
	\[\begin{tikzcd}[ampersand replacement=\&]
		{Y_{\min}^{U-vs}} \&\& {\PP(H^0(Y, \L^{sm})^{\vee})} \\
		\\
		{Y_{\min}^{U-vs}/U} \\
		\\
		{Q \sslash \lambda(\G_m)} \&\& {\PP((H^0(Y, \L^{sm})^{H})^{\vee})}
		\arrow["{q_U}"', from=1-1, to=3-1]
		\arrow["{q_L}"', dashed, from=3-1, to=5-1]
		\arrow[from=1-1, to=1-3]
		\arrow[from=5-1, to=5-3]
		\arrow["{H^0(\L^{sm})^{H} \subset H^0(\L^{sm})}", dashed, from=1-3, to=5-3]
	\end{tikzcd}\]
	
	In light of Lemmas \ref{lemma: restricting geometric quotients reductive} and \ref{lemma: restricting geometric quotients locally trivial}, it remains to show that the locally closed subscheme $q_U(Y_{\min}^{H-vs})$ of $Q$ is contained in the open stable locus $Q^{L-s}(\mathcal{L}_Q)$; that is, for each point $y \in Y_{\min}^{H-vs}$, the point $q_U(y)$ is stable with respect to the residual $L$-action on $Q$ and the linearisation $\L_Q$. But this follows from the first assertion of \cite[Theorem 2.28]{hoskinsjackson}.
\end{proof}

\subsection{Parabolic Subgroups of $SL(V)$} \label{section: parabolic subgroups of SL(V)}

Here we introduce the necessary notation involved when carrying out a quotienting-in-stages procedure for the action of a parabolic subgroup $P \subset SL(V)$ on a projective scheme $Y$.

Let $\lambda : \G_m \to SL(V)$ be a 1PS. Fix a basis $\{v_1, \dots, v_N\}$ for $V$ diagonalising the action of $\lambda(\G_m)$, with weights $r_1 \geq r_2 \geq \cdots \geq r_N$. Let $\ell = \ell(\lambda)$ be the number of distinct weights of $\lambda$, let $\beta_1 > \cdots > \beta_{\ell}$ be these distinct weights, and suppose the weight $\beta_i$ occurs $m_i$-times.

Let
$$ P = P(\lambda) = \left\{ g \in SL(V) : \lim_{t \to 0} \lambda(t) g \lambda(t)^{-1} \text{ exists in } SL(V) \right\} = U \rtimes L $$
be the parabolic subgroup of $SL(V)$ associated to $\lambda$, with unipotent radical $U = U(\lambda)$ and Levi factor $L = L(\lambda)$. As stated in \cite[Section 4.2]{hoskinsjackson}, the groups $P$, $U$ and $L$ can be explicitly described in terms of block upper-triangular matrices (defined with respect to the basis $\{v_1, \dots, v_N\}$) as follows:
\begin{enumerate}[label=(\roman*)]
	\item $P = \left\{ A = \begin{pmatrix}
		A_{11} & A_{12} & \cdots & A_{1, \ell - 1} & A_{1, \ell} \\
		0 & A_{22} & \cdots & A_{2, \ell-1} & A_{2, \ell} \\
		\vdots & \vdots & \ddots & \vdots & \vdots \\
		0 & 0 & \cdots & A_{\ell - 1, \ell - 1} & A_{\ell - 1, \ell} \\
		0 & 0 & \cdots & 0 & A_{\ell, \ell} 
	\end{pmatrix} \in SL(V) : A_{ij} \in \C^{m_i \times m_j} \right\}$.
\item $L = \left\{ A \in P : A_{ij} = 0 \text{ for all } 1 \leq i < j \leq \ell \right\}$ consists of all block diagonal matrices in $P$.
\item The centre $T = Z(L)$ of $L$ is given by
$$ T = \left\{ \mathrm{diag}(t_1 I_{m_1}, \dots, t_{\ell} I_{m_{\ell}}) : t_i \in \G_m, \ \prod_{i=1}^{\ell} t_i = 1 \right\} \cong \G_m^{\ell-1}. $$
\item The semisimple part of $L$ is given by 
$$ R = \left\{ \mathrm{diag}(A_{11}, \dots, A_{\ell, \ell}) \in L : \text{each } A_{ii} \in SL(m_i, \C) \right\}. $$
\item $U = \left\{ A \in P : A_{ii} = I_{m_i} \text{ for all } 1 \leq i \leq \ell \right\}$.
\end{enumerate}

The 1PS $\lambda$ grades the unipotent radical $U$. Let $H := U \rtimes R \subset P$ and $\hat{H} := H \rtimes T = U \rtimes (R \times T)$. There are surjections $R \to L/T$ and $H \to P/T$, both with finite kernels. Since GIT stability is unaffected by finite group actions, forming a quotient of an action by $P$ is equivalent to forming a quotient by an action of $\hat{H}$.

Following \cite[Definition 4.6]{hoskinsjackson}, we introduce the following notation.
\begin{enumerate}[label=(\roman*)]
	\setcounter{enumi}{5}
	\item For each $1 \leq i < \ell$, set $m_{>i} = \sum_{j > i} m_j$, $m_{\leq i} = \sum_{j \leq i} m_j$,
	$$ \beta_{>i} = \frac{\sum_{j > i} \beta_j m_j}{m_{>i}} \quad \text{and} \quad \beta_{\leq i} = \frac{\sum_{j \leq i} \beta_j m_j}{m_{\leq i}}. $$
	We define 1PS $\lambda^{(i)}$ and $\lambda^{[i]}$ of $T = Z(L)$ by setting
	$$ \lambda^{(i)}(t) := \mathrm{diag}(t^{\beta_1} I_{m_1}, \dots, t^{\beta_i} I_{m_i}, t^{\beta_{>i}} I_{m_{>i}}), \quad \lambda^{[i]}(t) = \mathrm{diag}(t^{\beta_{\leq i}} I_{m_{\leq i}}, t^{\beta_{>i}} I_{m_{>i}}). $$
	\item Let $U^{[i]}$ be the unipotent radical of the parabolic $P^{[i]} := P(\lambda^{[i]}) \supset P$:
	$$ U^{[i]} = \{A \in U : \text{for all } p < q, A_{pq} = 0 \text{ if } q \leq i \text{ or } p > i \}. $$
	This is graded by the 1PS $\lambda^{[i]}$ and is normal in $P$.
	\item Let $U^{(i)}$ be the unipotent radical of the parabolic $P(\lambda^{(i)}) \supset P$; this is graded by the 1PS $\lambda^{(i)}$ and is normal in $P$.
	\item Let $P^{(i)} := U^{(i)} \rtimes L^{(i)} \subset P$, where
	$$ L^{(i)} = \begin{cases} \{A \in L : A_{jj} = I_{m_j} \text{ if } j > i \} & \text{if } i < \ell - 1, \\ L & \text{if } i = \ell - 1. \end{cases} $$
	We denote the successive quotients by $L_i := L^{(i)}/L^{(i-1)}$, $U_i := U^{(i)}/U^{(i-1)}$ and $P_i := P^{(i)}/P^{(i-1)}$.
	\item Let $H^{(i)} := U^{(i)} \rtimes R^{(i)} \subset P^{(i)}$, where
	$$ R^{(i)} = \begin{cases} \{A \in R : A_{jj} = I_{m_j} \text{ if } j > i \} & \text{if } i < \ell - 1, \\ R & \text{if } i = \ell - 1. \end{cases} $$
	Let $R_i := R^{(i)}/R^{(i-1)}$ and $H_i := H^{(i)}/H^{(i-1)} = U_i \rtimes R_i$.
	\item Let $T^{(i)} = \prod_{j \leq i} \lambda^{(j)}(\G_m) = \prod_{j \leq i} \lambda^{[j]}(\G_m) \subset T$, and set $\hat{H}^{(i)} := H^{(i)} \rtimes T^{(i)}$.
	\item For each $j \leq i$ let $\lambda_j^{[i]}$ be the 1PS of length $2$ given by the composition
\[\begin{tikzcd}
	{\lambda_j^{[i]} : \G_m} && P && {P/P^{(j-1)},}
	\arrow["{\lambda^{[i]}}", from=1-1, to=1-3]
	\arrow[from=1-3, to=1-5]
\end{tikzcd}\]
Note that $\lambda^{[i]} = \lambda_1^{[i]}$ for all $i$.
\item As a special case of the above, set $\lambda_i = \lambda_i^{[i]}$ for each $i$. The associated parabolic $P(\lambda_i)$ has unipotent radical isomorphic to $U_i$, and $\lambda_i$ grades $U_i$. Let $\hat{H}_i := \hat{H}^{(i)}/\hat{H}^{(i-1)} \cong H_i \rtimes \lambda_i(\G_m) = U_i \rtimes (R_i \times \lambda_i(\G_m))$.
\end{enumerate}

Suppose $\mathcal{Q}_i$ is a projective scheme acted on by $\hat{H}_i$ with a very ample linearisation $\L_i$. The group $\hat{H}_i$ has a one-dimensional character group, which is generated by a character $\chi_i$ dual to the 1PS $\lambda_i = \lambda_i^{[i]}$ grading the unipotent radical $U_i$ of $\hat{H}_i$, and twisting $\L_i$ by some multiple $\epsilon_i \chi_i$ of $\chi_i$ corresponds to shifting the $\lambda_i(\G_m)$-weights each by $\epsilon_i$. As such, by twisting by $\epsilon_i \chi_i$ for suitable $\epsilon_i$, we can always ensure that very ample $\hat{H}_i$-linearisations are adapted; note that twisting by such a character does not affect which points of $\mathcal{Q}_i$ are in the loci $\mathcal{Q}_i(\lambda_i)_{\min}$ or $Z(\mathcal{Q}_i, \lambda_i)_{\min}$.

\section{Proof of the Main Result} \label{section: proof of main result}

We are now ready to begin the proof of Theorem \ref{theorem: main theorem}. Let $n, d$ be positive integers and let $V$ be a complex vector space of dimension $n + 1 < \dim V$. Assume in addition that $d > \dim V - n$ and that $d \not \in \left\{ \frac{\dim V - n - 1 + i}{n + 1 - i} : i = 1, \dots, n \right\}$. Let $\underline{\Phi} = (\Phi_0, \dots, \Phi_n)$ be a tuple of Hilbert polynomials of subschemes of $\PP(V)$, where $\Phi_i(t) = \frac{d}{i!} t^i + O(t^{i-1})$ for each $i = 0, 1, \dots, n$.

\subsection{Reduction to a Parabolic Action} \label{section: reduction to P}

As explained in Section \ref{section: local universal property}, we wish to construct a categorical quotient for the action of the group $G = SL(V)$ on the subscheme $\mathcal{S}' \subset \prod_{i=0}^n \mathcal{H}_i \times \prod_{j=0}^n \Gr_j$. Fix a decomposition
$$ V = W \oplus \C v_1 \oplus \cdots \oplus \C v_n $$
where $W \subset V$ is a subspace of codimension $n$. For each $i = 0, 1, \dots, n$ set $Z^i = W \oplus \bigoplus_{j \leq i} \C v_j$. Let $p_0$ be the point of $\prod_{i=0}^n \Gr_i$ corresponding to $(Z^0, \dots, Z^n)$, and let $P = \stab_G(p_0)$ be the parabolic subgroup of $G$ preserving the flag $Z^0 \subset \cdots \subset Z^n$. In the notation of Section \ref{section: parabolic subgroups of SL(V)} we have $P = P(\tilde{\lambda})$, where $\tilde{\lambda}$ is any 1PS of $G$ with decreasing weights $\beta_1 > \beta_2 > \cdots > \beta_{n+1}$ whose multiplicities are $(\dim W, 1, \dots, 1)$. Let $\mathcal{S}_0' \subset \mathcal{S}'$ be the preimage of $p_0$. Then $\mathcal{S}' = G \cdot \mathcal{S}'_0$, so by Lemma \ref{lemma: borel construction} we have
$$ \mathcal{S}' \cong G \times^P \mathcal{S}_0'. $$
It follows that a categorical quotient of $\mathcal{S}'$ by $G$ exists (and is an orbit space morphism) if and only if a categorical quotient of $\mathcal{S}'_0$ by $P$ exists (and is an orbit space morphism).\footnote{If $\mathcal{S}_0' \sslash P$ exists, then the projection $G \times \mathcal{S}_0' \to \mathcal{S}'_0$ induces an isomorphism of categorical quotients $\mathcal{S}' \sslash G \cong \mathcal{S}_0' \sslash P$.} 

\subsection{Linearising the Parabolic Action}

From now on, we regard $\mathcal{S}_0'$ as a locally closed $P$-invariant subscheme of $\mathcal{H} := \prod_{i=0}^n \mathcal{H}_i$ in the obvious way. For each $i = 0, \dots, n$, let
$$ \Psi_i : \mathrm{Hilb}(\PP(V), \Phi_i) \to \mathrm{Ch}_i := \mathrm{Chow}_{i,d}(\PP(V)) $$
be the corresponding Hilbert--Chow morphism (cf. Section \ref{section: chow linearisation}). Let
$$ \Psi = \prod_{i=0}^n \Psi_i : \mathcal{H} \to \mathrm{Ch} := \prod_{i=0}^n \mathrm{Ch}_i $$
be the product of the $\Psi_i$. Given a tuple of positive integers $\underline{a} = (a_0, \dots, a_n)$, let $\LL_{\mathrm{Ch}}^{\underline{a}} := \boxtimes_{i=0}^n \LL_{\mathrm{Ch}_i}^{a_i}$, where $\LL_{\mathrm{Ch}_i}$ is the Chow linearisation on the Chow scheme $\mathrm{Ch}_i$.

In the notation of Section \ref{section: parabolic subgroups of SL(V)} we have $R_1 = R^{(1)} = SL(W)$. We choose $\underline{a}$ to be of the form $\underline{a} = (a_0m, m, \dots, m)$ for some sufficiently large integers $m > 0$ and $a_0 > 0$, where $m$ is chosen such that each $\LL_{\mathrm{Ch}_i}^m$ is very ample, and $a_0'$ is chosen according to the following result (where the reductive group $K$ is taken to be $R^{(1)} = SL(W)$):

\begin{lemma} \label{lemma: schmitt product stability}
	Let $K$ be a reductive group acting on a product of projective schemes $\prod_{i=0}^n Y_i$, where each $Y_i$ is endowed with a very ample $K$-linearisation $\LL_i$. Then there exists a positive integer $a' > 0$ such that for all $a_0 > a'$, for every point $p = (p_0, \dots, p_n) \in \prod_{i=0}^n Y_i$, if $p_0$ is $K$-stable with respect to the linearisation $\LL_0$ then $p$ is $K$-stable with respect to the linearisation $\LL_0^{a_0} \boxtimes \LL_1 \boxtimes \cdots \boxtimes \LL_n$.
\end{lemma}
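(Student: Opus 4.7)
The plan is to apply the Hilbert--Mumford criterion, using the additivity of Hilbert--Mumford weights over tensor products and powers of linearisations. Fix a maximal torus $T \subseteq K$; since every 1PS of $K$ is $K$-conjugate to one of $T$ and $\mu(g \cdot p, \lambda) = \mu(p, g^{-1} \lambda g)$, the product point $p = (p_0, \dots, p_n)$ is $K$-stable with respect to $\LL_0^{a_0} \boxtimes \LL_1 \boxtimes \cdots \boxtimes \LL_n$ if and only if
$$ a_0 \, \mu^{\LL_0}(g \cdot p_0, \lambda) + \sum_{i=1}^n \mu^{\LL_i}(g \cdot p_i, \lambda) > 0 $$
for every $g \in K$ and every non-trivial 1PS $\lambda$ of $T$. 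It therefore suffices to establish a uniform strictly positive lower bound on $\mu^{\LL_0}$ for $K$-stable $p_0$, together with a uniform upper bound on $|\mu^{\LL_i}|$ for arbitrary $q \in Y_i$, each expressed as a multiple of $\|\lambda\|$ for a fixed norm $\|\cdot\|$ on $X_*(T)_{\R}$.

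The key observation is finiteness of states. Using the very ample linearisations, embed each $Y_i$ equivariantly into $\PP(W_i)$, where $W_i = H^0(Y_i, \LL_i)^{\vee}$. The Hilbert--Mumford weight $\mu^{\LL_i}(q, \lambda)$ depends only on $\lambda$ and on the \emph{state} $S(q) \subseteq \Omega_i$ of $q$ (the finite set of $T$-weights appearing in a weight-basis expansion of any lift of $q$), where $\Omega_i$ denotes the finite set of $T$-weights of $W_i$; it is piecewise linear in $\lambda$. The upper bound
$$ |\mu^{\LL_i}(q, \lambda)| \leq M_i \|\lambda\|, \qquad M_i := \max\{ \|w\| : w \in \Omega_i \}, $$
is then immediate for all $q \in Y_i$ and all $\lambda \in X_*(T)$. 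For the lower bound, $K$-stability of $p_0$ is preserved under the $K$-action, so each $g \cdot p_0$ is also $K$-stable, which forces $0$ to lie in the interior of the convex hull $\mathrm{Conv}(S(g \cdot p_0))$. For any state $S \subseteq \Omega_0$ with this interiority property, let $\mu(S, \lambda)$ denote the common Hilbert--Mumford weight of any point of $\PP(W_0)$ with state $S$; this is a continuous piecewise linear function of $\lambda$, strictly positive on $X_*(T)_{\R} \setminus \{0\}$, so it attains a strictly positive minimum $c_S > 0$ on the compact unit sphere. Since $S(g \cdot p_0)$ lies in the finite power set $2^{\Omega_0}$, one may take
$$ c_0 := \min\bigl\{ c_S : S \subseteq \Omega_0, \ 0 \in \mathrm{int}(\mathrm{Conv}(S)) \bigr\} > 0, $$
yielding a uniform bound $\mu^{\LL_0}(g \cdot p_0, \lambda) \geq c_0 \|\lambda\|$ independent of $p_0$ and $g$.

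Combining the two estimates, any integer $a' > c_0^{-1} \sum_{i=1}^n M_i$ works: for any $a_0 > a'$, any $g \in K$ and any non-trivial 1PS $\lambda$ of $T$,
$$ a_0 \, \mu^{\LL_0}(g \cdot p_0, \lambda) + \sum_{i=1}^n \mu^{\LL_i}(g \cdot p_i, \lambda) \geq \Bigl( a_0 c_0 - \sum_{i=1}^n M_i \Bigr) \|\lambda\| > 0, $$
giving $K$-stability of $p$. The main technical obstacle is the uniformity of $c_0$; this is where finiteness of the state collection $2^{\Omega_0}$ is essential, allowing one to pass from point-by-point positivity to a uniform bound depending only on the data $(K, Y_0, \LL_0)$.
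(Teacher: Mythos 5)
Your proof is correct. It differs from the paper's only in that the paper, after the same first move (using the very ample linearisations to reduce to a product of projective spaces $\prod_i \PP(W_i)$), simply cites Schmitt's Proposition 1.7.3.1, whereas you prove the required uniform estimate from scratch. Your argument is essentially the content of that citation: reduce to a fixed maximal torus $T$ by conjugacy of one-parameter subgroups, use additivity of Hilbert--Mumford weights over the box product, bound $|\mu^{\LL_i}(\cdot,\lambda)|$ above by $M_i\|\lambda\|$ via the finitely many $T$-weights of $W_i$, and extract a uniform lower bound $\mu^{\LL_0}(g\cdot p_0,\lambda)\geq c_0\|\lambda\|$ from the finiteness of the set of possible states $S\subseteq\Omega_0$ with $0\in\mathrm{int}(\mathrm{Conv}(S))$ (the equivalence of this interiority condition with $T$-stability of $g\cdot p_0$, which you use, is the standard weight-polytope form of the numerical criterion and holds because states are finite sets of lattice characters, so a separating functional can be taken rational). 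The only points worth being explicit about are the choice of a Weyl-invariant inner product identifying $X^*(T)_{\R}$ with $X_*(T)_{\R}$ so that $|\langle w,\lambda\rangle|\leq\|w\|\,\|\lambda\|$ makes sense, and the (harmless) vacuity of the minimum defining $c_0$ when no stable points exist. What your route buys is a self-contained proof with an explicit admissible threshold $a' > c_0^{-1}\sum_{i\geq 1} M_i$; what the paper's route buys is brevity.
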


\begin{proof}
	By using each $\LL_i$ to embed $Y_i$ inside a projective space, we may assume each $Y_i$ is a projective space $\PP(W_i)$. The lemma now easily follows from \cite[Proposition 1.7.3.1]{schmitt}.
\end{proof}	

\subsection{Upstairs Stabilisers and Weights}

Fix integers $\beta_1 > 0 > \beta_2 > \cdots > \beta_{n+1}$ with $\beta_1 \dim W + \sum_{i=2}^{n+1} \beta_i = 0$.\footnote{For the purposes of this paper, the specific choices of the $\beta_i$'s doesn't matter; what matters is that they satisfy the given constraints.} Define a 1PS $\lambda : \G_m \to SL(V)$ by setting
$$ \lambda(t) \cdot v = \begin{cases} t^{\beta_1} v & \text{if } v \in W, \\
	t^{\beta_{i+1}} v & \text{if } v = v_i. \end{cases} $$
Note that $P = P(\lambda)$ and that $\lambda$ grades the unipotent radical of the parabolic $P$. With this $\lambda$, define the 1PS $\lambda^{[1]}, \dots, \lambda^{[n]}$ etc. as in Section \ref{section: parabolic subgroups of SL(V)}. The 1PS $\lambda^{[i]}$ has weight space decomposition $V = Z^{i-1} \oplus V^{i-1}$ where $V^j = \bigoplus_{k > j} \C v_k$; vectors in $Z^{i-1}$ have $\lambda^{[i]}$-weight $\beta_{\leq i}$ and vectors in $V^{i-1}$ have $\lambda^{[i]}$-weight $\beta_{>i}$.

Fix a point $p$ of $\mathcal{S}_0'$ corresponding to a hyperplanar admissible flag $(X^0, X^1, \dots, X^n)$ with $X^i = X^n \cap \PP(Z^i)$ and $\langle X^i \rangle = \PP(Z^i) = \PP \left( W \oplus \bigoplus_{j \leq i} \C v_j \right)$ for all $i$. We introduce the following notation: for all $1 \leq i \leq j \leq n$, let $V^{ij} = \bigoplus_{k=i}^j \C v_k$. We also set $p^{[i]}(x) := \lim_{t \to 0} \lambda^{[i]}(t) \cdot x$ for a point $x \in \mathcal{H}$.

\begin{lemma} \label{lemma: limits of nice configurations}
	The following statements hold:
	\begin{enumerate}
		\item For each $i = 1, \dots, n$, the point $p^{[i]}(p) \in \mathcal{H}$ is given by the flag
		$$ p^{[i]}(p) = (X^0, \dots, X^{i-1}, J(X^{i-1}, \PP(V^{ii})), J(X^{i-1}, \PP(V^{i,i+1})), \dots, J(X^{i-1}, \PP(V^{in}))). $$
		\item For all $j \leq i$ we have $p^{[j]}(p^{[i]}(p)) = p^{[j]}(p)$.
	\end{enumerate}
\end{lemma}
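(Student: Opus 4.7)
The plan is to compute the flat limit $p^{[i]}(p)$ componentwise by analysing how each $X^j$ transforms under $\lambda^{[i]}$. Since $\beta_{\leq i}$ is a weighted average of $\beta_1, \dots, \beta_i$ while $\beta_{>i}$ is a weighted average of $\beta_{i+1}, \dots, \beta_{n+1}$, the monotonicity $\beta_1 > \cdots > \beta_{n+1}$ forces $\beta_{\leq i} > \beta_{>i}$, so $\lambda^{[i]}$ has exactly the two distinct weights $\beta_{\leq i}$ (on $Z^{i-1}$) and $\beta_{>i}$ (on $V^{i-1}$), and the flat-limit analysis reduces to repeated application of Lemma \ref{lemma: flat limits are joins}. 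For Part (1), when $j < i$ the subscheme $X^j \subset \PP(Z^j) \subset \PP(Z^{i-1})$ is $\lambda^{[i]}$-fixed and hence equals its own flat limit. For $j \geq i$, I would decompose $Z^j = Z^{i-1} \oplus V^{ij}$ and apply Lemma \ref{lemma: flat limits are joins} with $W = Z^{i-1}$ (larger-weight summand) and $U = V^{ij}$ (smaller-weight summand); the admissibility condition $X^j \cap \PP(Z^{i-1}) = X^{i-1}$ would then produce the claimed flat limit $J(X^{i-1}, \PP(V^{ij}))$.

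The main obstacle is verifying the dominance hypothesis of Lemma \ref{lemma: flat limits are joins}, namely that the rational map $X^j \dashrightarrow \PP(V^{ij})$ from $\PP(Z^{i-1})$ is dominant. I would argue by contradiction: suppose the image closure is a proper subvariety $Y \subsetneq \PP(V^{ij})$, so that $X^j \subset C := J(Y, \PP(Z^{i-1}))$. Writing $Y_k := Y \cap \PP(V^{ik})$ for $i \leq k \leq j$, admissibility places each $X^k = X^j \cap \PP(Z^k)$ inside $J(Y_k, \PP(Z^{i-1})) = C \cap \PP(Z^k)$. I plan to prove by induction on $k$ that $Y_k = \PP(V^{ik})$. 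The base case $k = i$ holds because $\PP(V^{ii})$ is a single point $\{[v_i]\}$ and this point must lie in $Y$, for otherwise $X^i$ would be forced into $\PP(Z^{i-1})$, contradicting non-degeneracy of $X^i$ in $\PP(Z^i)$. For the inductive step, $Y_k$ contains the hyperplane $Y_{k-1} = \PP(V^{i,k-1})$ of $\PP(V^{ik})$; if $Y_k \neq \PP(V^{ik})$ then $\dim Y_k = k-i-1$, and each top-dimensional component of $J(Y_k, \PP(Z^{i-1}))$ is a hyperplane in $\PP(Z^k)$, forcing the irreducible variety $X^k$ into a single such hyperplane and contradicting non-degeneracy of $X^k$. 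Taking $k = j$ yields $Y = \PP(V^{ij})$, the required contradiction.

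Part (2) follows by applying Part (1) twice and tracking components. For indices $k < j$, the $k$-th component of both flags is the $\lambda^{[j]}$-fixed scheme $X^k$. For $j \leq k < i$, the $k$-th component of $p^{[i]}(p)$ is already $X^k$, so Part (1) applied directly to $p$ gives $J(X^{j-1}, \PP(V^{jk}))$ on both sides. The remaining case $k \geq i$ requires computing the $\lambda^{[j]}$-flat limit of the join $J(X^{i-1}, \PP(V^{ik}))$. The key geometric input is the identity $J(X^{i-1}, \PP(V^{ik})) \cap \PP(Z^{j-1}) = X^{j-1}$: since $V^{ik} \cap Z^{j-1} = 0$ for $j \leq i$, a point of the join lies in $\PP(Z^{j-1})$ only if its $\PP(V^{ik})$-parameter vanishes, and the admissibility condition $X^{i-1} \cap \PP(Z^{j-1}) = X^{j-1}$ identifies the resulting intersection. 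Applying Lemma \ref{lemma: flat limits are joins} once more (with dominance verified by an analogous cone-and-induction argument, now using that the projection of $X^{i-1}$ from $\PP(Z^{j-1})$ fills $\PP(V^{j,i-1})$) then yields the flat limit $J(X^{j-1}, \PP(V^{jk}))$, matching the $k$-th component of $p^{[j]}(p)$.
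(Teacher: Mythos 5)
Your overall route is the same as the paper's: for $j<i$ the components are $\lambda^{[i]}$-fixed, for $j\geq i$ one applies Lemma \ref{lemma: flat limits are joins} to the decomposition $Z^j = Z^{i-1}\oplus V^{ij}$ (with $\beta_{\leq i}>\beta_{>i}$ correctly identified), and Part (2) rests on the identity $J(X^{i-1},\PP(V^{ik}))\cap\PP(Z^{j-1}) = X^{j-1}$ followed by a second application of that lemma. The only place you depart from the paper is that you try to \emph{prove} the dominance hypothesis, which the paper simply asserts as a consequence of non-degeneracy; this is a reasonable thing to want to spell out, but your inductive step has a genuine gap. You claim that if $Y_k := Y\cap\PP(V^{ik})$ is a proper subvariety containing the hyperplane $\PP(V^{i,k-1})$, then every top-dimensional component of $J(Y_k,\PP(Z^{i-1}))$ is a hyperplane of $\PP(Z^k)$. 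This is unjustified: although $Y$ is irreducible, the slice $Y_k$ need not be, and besides the component $\PP(V^{i,k-1})$ it may have further components of dimension up to $k-i-1$ that are not linear (and since $\dim Z^{i-1} = \dim V - n + i - 1$ can exceed $i+1$, the component of $J(Y_k,\PP(Z^{i-1}))$ containing the $k$-dimensional irreducible $X^k$ need not even come from a top-dimensional component of $Y_k$). If $X^k$ lies in the cone over a non-linear component, no contradiction with non-degeneracy is obtained, so the induction does not close as written.

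The gap is repairable by running the induction on the irreducible image closures rather than on the slices of $Y$: set $Y^{(k)} := \overline{\pi(X^k)}\subseteq\PP(V^{ik})$, which is irreducible because $X^k$ is, and contains $Y^{(k-1)} = \PP(V^{i,k-1})$ by the inductive hypothesis. An irreducible closed subvariety of $\PP(V^{ik})$ containing a hyperplane is either that hyperplane or all of $\PP(V^{ik})$; in the former case $X^k\subseteq J(\PP(V^{i,k-1}),\PP(Z^{i-1})) = \PP(Z^{k-1})$, contradicting non-degeneracy of $X^k$ in $\PP(Z^k)$. (Alternatively, a fibre-dimension count using $\dim\bigl(X^j\cap\PP(Z^{i-1})\bigr) = i-1$ gives dominance directly.) The same repair applies to the analogous dominance claim you invoke in Part (2). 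The remainder of your argument, including the treatment of the three index ranges in Part (2), matches the paper and is correct.
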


\begin{proof}
	$\lambda^{[i]}$ fixes pointwise the subschemes $X^0, \dots, X^{i-1}$. On the other hand, as each $X^j \subset \PP(Z^j)$ is non-degenerate, for each $j \geq i$ the projection $X^j \dashrightarrow \PP(V^{ij})$ is dominant. The first assertion now follows from Lemma \ref{lemma: flat limits are joins}. For the second assertion, note that $Z^{j-1} \cap V^{ik} = 0$ for all $k \geq i$, so
	$$ J(X^{i-1}, \PP(V^{ik})) \cap \PP(Z^{j-1}) = X^{i-1} \cap \PP(Z^{j-1}) = X^{j-1}. $$
	The second assertion then follows from a second application of Lemma \ref{lemma: flat limits are joins}.
\end{proof}

\begin{lemma} \label{lemma: stabilisers of limits}
	For all $i = 1, \dots, n$, we have $\stab_{U^{[i]}}(p^{[i]}(p)) = \{e\}$.
\end{lemma}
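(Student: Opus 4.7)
The strategy is to analyse an arbitrary $g \in \stab_{U^{[j]}}(p^{[i]}(p))$ directly via its linear action on $V$. Writing $g = I + N$ with $N \in \mathrm{Hom}(V/Z^{j-1}, Z^{j-1})$, set $b_\ell := N(v_\ell) \in Z^{j-1}$ for $\ell \in [j,n]$; the goal is to show $b_\ell = 0$ for every such $\ell$. The argument naturally splits according to whether $\ell \geq i$ or $\ell < i$; for brevity, let $\tilde{X}^k := J(X^{i-1}, \PP(V^{ik}))$ denote the $k$-th entry of $p^{[i]}(p)$ for $k \geq i$.

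For $\ell \in [i,n]$, the idea is to recover $V^{ik}$ intrinsically from $\tilde{X}^k$ as its singular locus. By the tangent-space formulae for joins recalled before Lemma~\ref{lemma: flat limits are joins}, together with the smoothness of $X^{i-1}$ and the disjointness $X^{i-1} \cap \PP(V^{ik}) = \emptyset$, every point of $\tilde{X}^k \setminus \PP(V^{ik})$ is smooth, whereas every point of $\PP(V^{ik})$ has projective tangent space all of $\PP(V)$. Since $\dim \tilde{X}^k = k \leq n < \dim V - 1$, the points of $\PP(V^{ik})$ are singular, so the singular locus of $\tilde{X}^k$ is exactly $\PP(V^{ik})$. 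As $g$ stabilises $\tilde{X}^k$ it preserves this singular locus, so $g \cdot V^{ik} = V^{ik}$; expanding and using $V^{ik} \cap Z^{j-1} = 0$ forces $b_\ell = 0$ for $\ell \in [i,k]$, and taking $k = n$ disposes of this range.

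For $\ell \in [j,i-1]$, I would induct on $\ell$, using that $g$ must stabilise the smooth connected non-degenerate subvariety $X^\ell \subset \PP(Z^\ell)$. Assume $b_k = 0$ for $k \in [j,\ell-1]$; then $g$ fixes $Z^{\ell-1}$ pointwise, preserves $\PP(Z^\ell)$, and the restriction $g|_{Z^\ell} = I + N_\ell$ is a transvection with $N_\ell^2 = 0$. Supposing $b_\ell \neq 0$ for contradiction, we have $g^m|_{Z^\ell} = I + m N_\ell$, so the Zariski closure of $\langle g|_{Z^\ell}\rangle$ inside $SL(Z^\ell)$ is the one-parameter unipotent subgroup $\{I + tN_\ell : t \in \C\} \cong \G_a$, which therefore also preserves the closed subvariety $X^\ell$. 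This $\G_a$ acts on $\PP(Z^\ell)$ with fixed hyperplane $\PP(Z^{\ell-1})$ and with every non-fixed orbit closure a projective line through the sink $[b_\ell] \in \PP(Z^{\ell-1})$. Non-degeneracy of $X^\ell$ gives $X^\ell \not\subset \PP(Z^{\ell-1})$, so such orbit closures exist inside $X^\ell$, whence $[b_\ell] \in X^\ell$; applying upper semi-continuity of fibre dimension to the projection $X^\ell \dashrightarrow \PP(Z^\ell/\C b_\ell)$ from $[b_\ell]$, every fibre contains a full projective line through $[b_\ell]$, so $X^\ell$ is a projective cone with vertex $[b_\ell]$. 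Smoothness of $X^\ell$ at $[b_\ell]$ then forces this cone to equal its Zariski tangent space there, hence to be a linear subspace, contradicting $\deg X^\ell = d \geq 2$ (since $d > \dim V - n \geq 1$). The main obstacle is this last stage, and specifically upgrading from $X^\ell$ being \emph{generically} swept by lines through $[b_\ell]$ to $X^\ell$ being a cone \emph{globally}, which is what permits smoothness at the vertex to force linearity; the fibre-dimension argument accomplishes this.
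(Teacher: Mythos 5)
Your proof is correct, and its first half is exactly the paper's argument: for $\ell\in[i,n]$ the paper likewise recovers $\PP(V^{ik})$ intrinsically from $J(X^{i-1},\PP(V^{ik}))$ by comparing dimensions of projective tangent spaces (your identification of it as the singular locus is the same computation), concludes that the stabiliser preserves each $\PP(V^{ik})$, and kills the corresponding blocks using $V^{ik}\cap Z^{j-1}=0$. One cosmetic remark: at a point $q\in\PP(V^{ik})$ the projective tangent space is $\langle\PP(V^{ik}),\PP(Z^{i-1})\rangle=\PP(Z^{k})$ rather than all of $\PP(V)$, and for $i=1$ the join is a union of $d$ linear $k$-planes through $\PP(V^{1k})$ rather than an irreducible join; but in every case the relevant dimension exceeds $k=\dim J$, so the singular locus is still exactly $\PP(V^{ik})$ and your conclusion stands.

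Where you genuinely diverge is the range $\ell\in[j,i-1]$. The paper disposes of the remaining blocks in one sentence, by observing that $u\in U^{[j]}$ forces $A_{pq}=0$ when $q\le j$ or $p>j$ and concluding $u=e$; but combined with the join constraint ($A_{pq}=0$ for $p\le i<q$) this rules out all off-diagonal blocks only when $j=i$ --- for $j<i$ the blocks with $p\le j<q\le i$, i.e.\ your $b_j,\dots,b_{i-1}$, survive both conditions. (The lemma is only ever invoked with $j=i$ in the proof of Proposition \ref{prop: survival inductive argument}, so nothing downstream is affected, but the statement as written needs an extra step.) Your transvection argument supplies exactly that step: since $N_\ell$ has rank one with $N_\ell^2=0$, the Zariski closure of $\langle g|_{Z^\ell}\rangle$ is the full $\G_a=\{I+tN_\ell\}$ and preserves the closed subvariety $X^\ell$; non-degeneracy produces a non-fixed point and hence $[b_\ell]\in X^\ell$; your semi-continuity argument correctly upgrades ``generically swept by lines through $[b_\ell]$'' to ``$X^\ell$ is a cone with vertex $[b_\ell]$''; and smoothness at the vertex then forces $X^\ell$ to be a linear space, contradicting $\deg X^\ell=d>\dim V-n\ge 2$. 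So your proof establishes the lemma in the full generality in which it is stated, whereas the paper's own proof, read literally, covers only the case $j=i$.
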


\begin{proof}
	If $k \geq i$, then any projective automorphism of $\PP(V)$ arising from the unipotent group $U^{[i]}$ which preserves $J(X^{i-1}, \PP(V^{i,k}))$ must preserve $\PP(V^{i,k})$, as seen by considering dimensions of projective tangent spaces; points of $\PP(V^{i,k})$ have projective tangent space equal to all of $\PP(V)$, whereas points of $J(X^{i-1}, \PP(V^{i,k}))$ off $\PP(V^{i,k})$ have projective tangent spaces of dimension $(i-1) + (k - i) + 1 = k < \dim \PP(V)$. 
	
	It follows that any element $u = (A_{pq})_{p,q=1}^{n+1} \in \stab_{U^{[i]}}(p^{[i]}(p))$ must preserve each of $\PP(V^{i,i}), \dots, \PP(V^{i,n})$, which implies that if $p \neq q$ then $A_{pq} = 0$ whenever $p \leq i$ and $q > i$. On the other hand $u \in U^{[i]}$, so for $p \neq q$ we have $A_{pq} = 0$ if $q \leq i$ or if $p > i$. Consequently $u = e$ must be the identity matrix.
\end{proof}

\begin{lemma} \label{lemma: weights of limits}
	For each $i = 1, \dots, n$, the Hilbert--Mumford weight $\mu^{\LL_{\mathrm{Ch}}^{\underline{a}}}(\Psi(p), \lambda^{[i]})$ is the same for all points $p \in \mathcal{S}_0'$.
\end{lemma}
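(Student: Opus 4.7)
The plan is to exploit the fact that $\LL_{\mathrm{Ch}}^{\underline{a}} = \boxtimes_{j=0}^n \LL_{\mathrm{Ch}_j}^{a_j}$ and that Hilbert--Mumford weights are additive under tensor products of linearisations along products. This gives
$$\mu^{\LL_{\mathrm{Ch}}^{\underline{a}}}(\Psi(p), \lambda^{[i]}) \;=\; \sum_{j=0}^n a_j \, \mu^{\LL_{\mathrm{Ch}_j}}(\Psi_j(X^j), \lambda^{[i]}),$$
so it suffices to show that each individual Chow weight $\mu^{\LL_{\mathrm{Ch}_j}}(\Psi_j(X^j), \lambda^{[i]})$ is independent of the choice of $p$.

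For each $j$, I would read off the Chow weight from the flat limit $\lim_{t\to 0}\lambda^{[i]}(t)\cdot X^j$ via Proposition \ref{prop: chow weight general}. By Lemma \ref{lemma: limits of nice configurations}(1) this flat limit is identified explicitly in two cases. If $j \le i-1$, the limit is $X^j$ itself, and $X^j \subset \PP(Z^{i-1})$ is contained in the single $\lambda^{[i]}$-weight space of weight $\beta_{\leq i}$, so Lemma \ref{lemma: chow weight with only one weight} yields a weight equal to $\beta_{\leq i} \, d \, (j+1)$, which depends only on fixed data. If $j \ge i$, the limit is the join $J(X^{i-1}, \PP(V^{ij}))$, and I would apply Lemma \ref{lemma: chow weights of joins} with $W = Z^{i-1}$, $U = V^{ij}$, $Y = X^{i-1}$, $a = \beta_{>i}$, $b = \beta_{\leq i}$, $r = i-1$, $\dim \PP(U) = j-i$, obtaining a formula that again depends only on $d$, $i$, $j$, $\dim V^{ij}$, and the $\beta$'s, not on $p$.

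The main point requiring attention is the borderline case $j = 2i - 1$, where $\dim X^{i-1} = \dim \PP(V^{ij})$: Lemma \ref{lemma: chow weights of joins} then demands the additional non-vanishing hypothesis $\beta_{>i} + \beta_{\leq i} d \ne 0$. This is exactly where the arithmetic hypothesis $d \not\in \{(\dim V - n - 1 + i)/(n+1-i) : i = 1,\dots,n\}$ of Theorem \ref{theorem: main theorem} is invoked: since $\lambda^{[i]} \in SL(V)$ forces $\beta_{>i}(n+1-i) + \beta_{\leq i}(\dim V - n - 1 + i) = 0$, one has
$$\beta_{>i} + \beta_{\leq i} d \;=\; \beta_{\leq i}\left(d - \frac{\dim V - n - 1 + i}{n + 1 - i}\right),$$
and this is non-zero precisely under the theorem's assumption on $d$. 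Granted this, each summand above is a universal constant depending only on $d$, $\underline{\Phi}$, $i$, $j$ and the fixed weights $\beta_{\leq i}, \beta_{>i}$, so $\mu^{\LL_{\mathrm{Ch}}^{\underline{a}}}(\Psi(p), \lambda^{[i]})$ is independent of $p$, as claimed.
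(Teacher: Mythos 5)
Your proof is correct and follows essentially the same route as the paper: additivity of the Hilbert--Mumford weight over the product of Chow schemes, identification of the flat limits via Lemma \ref{lemma: limits of nice configurations}, Lemma \ref{lemma: chow weight with only one weight} for the factors with $j<i$, Lemma \ref{lemma: chow weights of joins} for the joins with $j\geq i$, and the arithmetic hypothesis on $d$ to handle the borderline case $j=2i-1$. The only detail you elide (and which the paper spells out) is that $J(X^{i-1},\PP(V^{ij}))$ sits in $\PP(Z^j)\subsetneq\PP(V)$ for $j<n$, so one should note that its homogeneous coordinate ring in $\PP(V)$ agrees with that in $\PP(Z^j)$ before invoking Lemma \ref{lemma: chow weights of joins}.
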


\begin{proof}
	First of all, since $d > \dim V - n = \dim W$, since $\beta_2, \dots, \beta_{n+1} < 0$ and since $d \not \in \left\{ \frac{\dim W + i - 1}{n + 1 - i} : i = 1, \dots, n \right\}$ then for each $i = 1, \dots, n$ we have
	$$ \beta_{>i} + d \beta_{\leq i} = \left( \frac{1}{n+1-i} - \frac{d}{\dim W + i - 1} \right)(\beta_{i+1} + \cdots + \beta_{n+1}) \neq 0. $$
	Therefore Lemma \ref{lemma: chow weights of joins} is applicable. 
	
	Choose a basis $y_1, \dots, y_M$ for $W^{\vee}$ and let $x_i \in V^{\vee}$ be dual to $v_i$. Write $\lambda^{[i]}(t) = \mathrm{diag}(t^{\beta_{\leq i}} I_{m_{\leq i}}, t^{\beta_{>i}} I_{m_{>i}})$ as in Section \ref{section: parabolic subgroups of SL(V)}. For a point $p \in \mathcal{S}_0'$, up to a positive scalar multiple we have
	\begin{multline*}
		\mu^{\LL_{\mathrm{Ch}}^{\underline{a}}}(\Psi(p), \lambda^{[i]}) = a_0 \mu^{\LL_{\mathrm{Ch}_1}}(\Psi_0(X^0), \lambda^{[i]}) \\ + \sum_{j=1}^{i-1} \mu^{\LL_{\mathrm{Ch}_j}}(\Psi_j(X^j), \lambda^{[i]}) + \sum_{j=i}^{n} \mu^{\LL_{\mathrm{Ch}_j}}(\Psi_j(J(X^{i-1}, \PP(V^{ij}))), \lambda^{[i]}).
	\end{multline*}
	
	The contribution $\mu^{\LL_{\mathrm{Ch}_j}}(\Psi_j(X^j), \lambda^{[i]})$ coming from $X^j$ (where $j < i$) is equal to $\beta_{\leq i}d(j+1)$ by Lemma \ref{lemma: chow weight with only one weight}. For the contribution coming from the join $J_j = J(X^{i-1}, \PP(V^{ij}))$ (where $j \geq i$), we have that the homogeneous ideal of $J_j \subset \PP(V)$ is given by
	$$ \mathbb{I}_{\PP(V)}(J_j) = \langle \mathbb{I}_{\PP(Z_j)}(J_j), x_{j+1}, \dots, x_n \rangle, $$
	so the homogeneous coordinate ring of $J_j \subset \PP(V)$ is the same as that for $J_j \subset \PP(Z^j)$. Applying Lemma \ref{lemma: chow weights of joins} then gives
	
	$$ \mu^{\LL_{\mathrm{Ch}_j}}(\Psi_j(J_j), \lambda^{[i]}) = \begin{cases} \beta_{>i}(j-i+1) & \text{if } j > 2i-1, \\ \beta_{\leq i} di & \text{if } j < 2i-1, \\ i(\beta_{>i} + d \beta_{\leq i}) & \text{if } j = 2i-1. \end{cases} $$
	
	It is now clear that the resulting expression for $\mu^{\LL_{\mathrm{Ch}}^{\underline{a}}}(\Psi(p), \lambda^{[i]})$ is the same for all points $p \in \mathcal{S}_0'$.
\end{proof}

\subsection{The Quotienting-in-Stages Procedure}

We now construct a quotient of $\mathcal{S}_0'$ in stages, following \cite[Construction 4.20]{hoskinsjackson}. After stage $i$, we will have quotiented by the group $\hat{H}^{(i)} = H^{(i)} \rtimes T^{(i)}$, and we pass from stage $i-1$ to stage $i$ by quotienting out the action of the group $\hat{H}_i = H_i \rtimes \lambda_i(\G_m)$. After $n$ stages, we will obtain a quotient for the action of $\hat{H}^{(n)} = P$, the parabolic group in Section \ref{section: reduction to P}.

\begin{construction} \label{construction: quotienting in stages}
	\emph{Base step:} As $\Psi$ restricts to an isomorphism $\mathcal{S}_0' \stackrel{\simeq}{\to} \Psi(\mathcal{S}_0')$, we can regard $\mathcal{S}_0' \subset \mathrm{Ch}$. Let $\mathcal{Q}_1$ be the projective scheme obtained by taking the closure of $\mathcal{S}_0'$ in $\mathrm{Ch}$. For some $s_0 > 0$, the $GL(V)$-linearisation $\LL_{\mathrm{Ch}}^{s_0\underline{a}}$ is very ample; let $\L_1$ be the restriction of this linearisation to $\mathcal{Q}_1$, twisted by a suitable character $\epsilon_1 \chi_1$ (to ensure that the corresponding $\hat{H}_1$-linearisation is adapted).
	
	Assuming we can apply Theorem \ref{theorem: U-hat in our setting}, there is a geometric $\hat{H}_1$-quotient $q_1 : (\mathcal{Q}_1)_{\min}^{\hat{H}_1-vs} \to (\mathcal{Q}_1)_{\min}^{\hat{H}_1-vs}/\hat{H}_1$ and a locally closed immersion $(\mathcal{Q}_1)_{\min}^{\hat{H}_1-vs}/\hat{H}_1 \subset \PP((H^0(\mathcal{Q}_1, \L_1^{s_1})^{\hat{H}_1})^{\vee})$ for some integer $s_1 > 0$, with the quotient being induced by the projection $\PP(H^0(\mathcal{Q}_1, \L_1^{s_1})^{\vee}) \dashrightarrow \PP((H^0(\mathcal{Q}_1, \L_1^{s_1})^{\hat{H}_1})^{\vee})$. Let $\mathcal{Q}_2$ be the closure of $(\mathcal{Q}_1)_{\min}^{\hat{H}_1-vs}/\hat{H}_1$ in $\PP((H^0(\mathcal{Q}_1, \L_1^{s_1})^{\hat{H}_1})^{\vee})$ and let $\L_2$ be the restriction of the $\OO(1)$ of $\PP((H^0(\mathcal{Q}_1, \L_1^{s_1})^{\hat{H}_1})^{\vee})$ to $\mathcal{Q}_2$, twisted by a suitable character $\epsilon_2 \chi_2$ to ensure adaptedness; then $\L_2$ is a very ample linearisation for a residual action of $P/\hat{H}^{(1)}$.
	
	\emph{Induction step:} Suppose $i \leq n-1$, and suppose we have constructed successive quotients $q_j$ of the form
	\[\begin{tikzcd}[ampersand replacement=\&,column sep=large]
		{(\mathcal{Q}_j)_{\min}^{\hat{H}_j-vs}} \&\& {(\mathcal{Q}_j)_{\min}^{\hat{H}_j-vs}/\hat{H}_j} \\
		\\
		{\mathcal{Q}_j} \&\& {\mathcal{Q}_{j+1}} \\
		\\
		{\PP(H^0(\mathcal{Q}_j, \L_j^{s_j})^{\vee})} \&\& {\PP((H^0(\mathcal{Q}_j, \L_j^{s_j})^{\hat{H}_j})^{\vee})}
		\arrow["{q_j}", from=1-1, to=1-3]
		\arrow["\circ"{description}, hook, from=1-1, to=3-1]
		\arrow["\circ"{description}, hook, from=1-3, to=3-3]
		\arrow["\shortmid"{marking}, hook, from=3-1, to=5-1]
		\arrow["\shortmid"{marking}, hook, from=3-3, to=5-3]
		\arrow["{q_j}", dashed, from=3-1, to=3-3]
		\arrow["{H^0(\mathcal{Q}_j, \L_j^{s_j})^{\hat{H}_j} \subset H^0(\mathcal{Q}_j, \L_j^{s_j})}"', dashed, from=5-1, to=5-3]
	\end{tikzcd}\]
	for each $j = 1, \dots, i$, where the top horizontal arrow is a geometric $\hat{H}_j$-quotient, where $\mathcal{Q}_{j+1}$ is the closure of the locally closed subscheme $(\mathcal{Q}_j)_{\min}^{\hat{H}_j-vs}/\hat{H}_j \subset \PP((H^0(\mathcal{Q}_j, \L_j^{s_j})^{\hat{H}_j})^{\vee})$, and where each $\mathcal{Q}_j$ is endowed with the very ample linearisation $\L_j$ obtained by restricting the $\OO(1)$ of $\PP((H^0(\mathcal{Q}_{j-1}, \L_{j-1}^{s_{j-1}})^{\hat{H}_{j-1}})^{\vee})$ to $\mathcal{Q}_j$ and then twisting by a suitable character $\epsilon_j \chi_j$ of $\hat{H}_j$ to ensure adaptedness. 
	
	The very ample invertible sheaf $\L_{i+1}$ on the projective scheme $\mathcal{Q}_{i+1}$ carries a linearisation for a residual action of $P/\hat{H}^{(i)}$, and in particular for $\hat{H}_{i+1}$. By twisting $\L_{i+1}$ by a character of $\hat{H}_{i+1}$ of the form $\epsilon_{i+1} \chi_{i+1}$, we can ensure that this $\hat{H}_{i+1}$-linearisation is adapted. Assuming once again that we can apply Theorem \ref{theorem: U-hat in our setting}, we obtain a diagram (for some integer $s_{i+1} > 0$) of the form
	\[\begin{tikzcd}[ampersand replacement=\&,column sep=large]
		{(\mathcal{Q}_{i+1})_{\min}^{\hat{H}_{i+1}-vs}} \&\& {(\mathcal{Q}_{i+1})_{\min}^{\hat{H}_{i+1}-vs}/\hat{H}_{i+1}} \\
		\\
		{\mathcal{Q}_{i+1}} \&\& {\mathcal{Q}_{i+2}} \\
		\\
		{\PP(H^0(\mathcal{Q}_{i+1}, \L_{i+1}^{s_{i+1}})^{\vee})} \&\& {\PP((H^0(\mathcal{Q}_{i+1}, \L_{i+1}^{s_{i+1}})^{\hat{H}_{i+1}})^{\vee})}
		\arrow["{q_{i+1}}", from=1-1, to=1-3]
		\arrow["\circ"{description}, hook, from=1-1, to=3-1]
		\arrow["\circ"{description}, hook, from=1-3, to=3-3]
		\arrow["\shortmid"{marking}, hook, from=3-1, to=5-1]
		\arrow["\shortmid"{marking}, hook, from=3-3, to=5-3]
		\arrow["{q_{i+1}}", dashed, from=3-1, to=3-3]
		\arrow["{H^0(\mathcal{Q}_{i+1}, \L_{i+1}^{s_{i+1}})^{\hat{H}_{i+1}} \subset H^0(\mathcal{Q}_{i+1}, \L_{i+1}^{s_{i+1}})}"', dashed, from=5-1, to=5-3]
	\end{tikzcd}\]
where the top horizontal arrow is a geometric $\hat{H}_{i+1}$-quotient and where $\mathcal{Q}_{i+2}$ is the closure of the locally closed subscheme $(\mathcal{Q}_{i+1})_{\min}^{\hat{H}_{i+1}-vs}/\hat{H}_{i+1} \subset \PP((H^0(\mathcal{Q}_{i+1}, \L_{i+1}^{s_{i+1}})^{\hat{H}_{i+1}})^{\vee})$. By defining $\L_{i+2}$ to be the restriction of the $\OO(1)$ of $\PP((H^0(\mathcal{Q}_{i+1}, \L_{i+1}^{s_{i+1}})^{\hat{H}_{i+1}})^{\vee})$ to $\mathcal{Q}_{i+2}$, twisted by a suitable character $\epsilon_{i+2} \chi_{i+2}$ to ensure adaptedness, the induction step of Construction \ref{construction: quotienting in stages} is complete. $\blacksquare$
\end{construction}

Assuming that Theorem \ref{theorem: U-hat in our setting} can always be applied at each stage, after $n$ stages Construction \ref{construction: quotienting in stages} terminates, yielding a diagram of the form
\[\begin{tikzcd}[ampersand replacement=\&]
	{\mathcal{Q}_1} \&\& {\mathcal{Q}_2} \&\& \cdots \&\& {\mathcal{Q}_n} \&\& {\mathcal{Q}_{n+1}} \\
	\\
	{(\mathcal{Q}_1)_{\min}^{\hat{H}_1-vs}} \&\& {(\mathcal{Q}_2)_{\min}^{\hat{H}_2-vs}} \&\& \cdots \&\& {(\mathcal{Q}_n)_{\min}^{\hat{H}_n-vs}}
	\arrow["\circ"{description}, hook', from=3-1, to=1-1]
	\arrow["\circ"{description}, hook', from=3-3, to=1-3]
	\arrow["\circ"{description}, hook', from=3-7, to=1-7]
	\arrow["{q_1}", dashed, from=1-1, to=1-3]
	\arrow["{q_2}", dashed, from=1-3, to=1-5]
	\arrow["{q_{n-1}}", dashed, from=1-5, to=1-7]
	\arrow["{q_n}", dashed, from=1-7, to=1-9]
	\arrow["{q_n}"', from=3-7, to=1-9]
	\arrow["{q_2}"', from=3-3, to=1-5]
	\arrow["{q_1}"', from=3-1, to=1-3]
\end{tikzcd}\]
where for each $i$ the restriction $q_i : (\mathcal{Q}_i)_{\min}^{\hat{H}_i-vs} \to q_i((\mathcal{Q}_i)_{\min}^{\hat{H}_i-vs}) = (\mathcal{Q}_i)_{\min}^{\hat{H}_i-vs}/\hat{H}_i$ is a geometric $\hat{H}_i$-quotient. If one sets for each $i = 1, \dots, n$
$$ q_{(i)} := q_i \circ q_{i-1} \circ \cdots \circ q_1 : \mathcal{Q}_1 \dashrightarrow \mathcal{Q}_{i+1}, $$
and if one inductively defines open subschemes $\mathcal{Q}_{(i)} \subset \mathcal{Q}_1$ for $i = 1, \dots, n$ by setting $\mathcal{Q}_{(1)} := (\mathcal{Q}_1)_{\min}^{\hat{H}_1-vs}$ and setting for $i > 1$
$$ \mathcal{Q}_{(i)} := \mathcal{Q}_{(i-1)} \cap q_{(i-1)}^{-1}( (\mathcal{Q}_i)_{\min}^{\hat{H}_i-vs} ), $$
then for each $i$, the morphism $q_{(i)} : \mathcal{Q}_{(i)} \to q_{(i)}(\mathcal{Q}_{(i)}) = \mathcal{Q}_{(i)}/\hat{H}^{(i)} \subset \mathcal{Q}_{i+1}$ is a well-defined geometric $\hat{H}^{(i)}$-quotient.

\subsection{The Parameter Space lies in the Domain of the Quotient}

Suppose we can show that Construction \ref{construction: quotienting in stages} can be carried out in full, so that there is a geometric $P$-quotient $\mathcal{Q}_{(n)}/P$. In order to show $\mathcal{S}_0'$ admits a geometric $P$-quotient, it is enough to prove that $\mathcal{S}_0'$ is an open subscheme of $\mathcal{Q}_{(n)}$. We will prove this by using an inductive argument to show that $\mathcal{S}_0'$ is an open subscheme of $\mathcal{Q}_{(i)}$ for each $i = 1, \dots, n$.
	
Fix a point $p$ of $\mathcal{S}_0'$ corresponding to a hyperplanar admissible flag flag $(X^0, X^1, \dots, X^n)$ with $X^i = X^n \cap \PP(Z^i)$ and $\langle X^i \rangle = \PP(Z^i) = \PP \left( W \oplus \bigoplus_{j \leq i} \C v_j \right)$ for all $i$. Let $x = \Psi(p) \in \mathcal{Q}_1$.

We introduce the following notation:
\begin{enumerate}[label=(\roman*)]
    \item As above, set $\mathcal{Q}_{(1)} := (\mathcal{Q}_1)_{\min}^{\hat{H}_1-vs}$.
    \item For all $i = 1, \dots, n$, set $\mathcal{Q}_1^{[i]} := \mathcal{Q}_1(\lambda^{[i]})_{\min}$ and set $Z_1^{[i]} := Z(\mathcal{Q}_1, \lambda^{[i]})_{\min}$. 
    \item Denote $p^{[i]}$ the $\lambda^{[i]}$-retraction $p^{[i]} : \mathcal{Q}_1^{[i]} \to Z_1^{[i]}$.
\end{enumerate}

\begin{lemma} \label{lemma: qis base step}
    The following statements hold:
    \begin{enumerate}
        \item the point $x$ lies in $\mathcal{Q}_{(1)}$, and for all $i \geq 1$ we have $x \in \mathcal{Q}_1^{[i]}$;
        \item for all $i > 1$, we have $p^{[i]}(x) \in \mathcal{Q}_{(1)}$; and
        \item for all $i \geq 1$, we have $p^{[i]}(x) \in Z_1^{[i]}$.
    \end{enumerate}
    In particular, for the action $\hat{H}_1 \circlearrowright \mathcal{Q}_1$ linearised by $L_1$, Theorem \ref{theorem: U-hat in our setting} is applicable.
\end{lemma}

\begin{proof}
    We begin by considering the third statement. From Lemma \ref{lemma: weights of limits}, the Hilbert--Mumford weight of $p^{[i]}(x)$ with respect to the linearisation $L_1$ and the 1PS $\lambda^{[i]}$ is constant across the dense open subscheme $\mathcal{S}_0' \equiv \Psi(\mathcal{S}_0') \subset \mathcal{Q}_1$. This common weight must be the minimal weight, and so $p^{[i]}(x) \in Z_1^{[i]}$ as claimed.
    
    Let us now show that the points $x, p^{[2]}(x), \dots, p^{[n]}(x)$ are in $\mathcal{Q}_{(1)} = (\mathcal{Q}_1)_{\min}^{\hat{H}_1-vs}$; this involves establishing the following for all elements $y \in \{x, p^{[2]}(x), \dots, p^{[n]}(x)\}$:
    \begin{enumerate}[label=(\roman*)]
        \item  $y \in \mathcal{Q}_1^{[1]} = \mathcal{Q}_1(\lambda^{[1]})_{\min}$, that is $p^{[1]}(y)$ has minimal $\lambda^{[1]}$-weight.
        \item $p^{[1]}(y)$ has trivial $U^{[1]}$-stabiliser and is stable under the action of $R_1 = R^{(1)} = SL(W)$ on $Z_1^{[1]}$.
        \item $y$ is not contained in the $U^{[1]}$-sweep of $Z_1^{[1]}$.
    \end{enumerate}
    However, by Lemma \ref{lemma: limits of nice configurations} we have for all such points $y$ that
    $$ p^{[1]}(y) = p^{[1]}(x) = \Psi(X^0, J(X^0, \PP(V^{1,1})), \dots, J(X^0, \PP(V^{1,n}))). $$
    
    We have already shown that this point has minimal $\lambda^{[1]}$-weight, so each such point $y$ is in $\mathcal{Q}_1^{[1]}$. By Lemma \ref{lemma: schmitt product stability}, the Chow stability of $X^0 \subset \PP(Z^0)$ together with Proposition \ref{prop: chow stability dim 0} implies that the point $p^{[1]}(x)$ is $SL(W)$-stable with respect to $\L_1$. Lemma \ref{lemma: stabilisers of limits} implies that $\stab_{U^{[1]}}(p_{[1]}(x))$ is trivial. Finally, points in $U^{[1]} Z_1^{[1]}$ are fixed by the retraction $p^{[1]}$, whereas no point $y \in \{x, p^{[2]}(x), \dots, p^{[n]}(x)\}$ is fixed by $p^{[1]}$, as the dimension 1 component of a flag corresponding to such a point $y$ (namely, $X^1$) does not coincide with that of $p^{[1]}(y)$. Consequently each such point $y$ cannot lie in $U^{[1]} Z_1^{[1]}$. This completes the proof of the Lemma.
\end{proof}

We now consider the induction step of Construction \ref{construction: quotienting in stages}. Suppose $1 \leq i < \ell - 1 = n$, and suppose we have constructed successive quotients $q_j$ of the form
\[\begin{tikzcd}[ampersand replacement=\&,column sep=6em,row sep=1em]
    {(\mathcal{Q}_j)_{\min}^{\hat{H}_j-vs}} \&\& {(\mathcal{Q}_j)_{\min}^{\hat{H}_j-vs}/\hat{H}_j} \\
    \\
    {\mathcal{Q}_j} \&\& {\mathcal{Q}_{j+1}} \\
    \\
    {\PP(H^0(\mathcal{Q}_j, \L_j^{s_j})^{\vee})} \&\& {\PP((H^0(\mathcal{Q}_j, \L_j^{s_j})^{\hat{H}_j})^{\vee})}
    \arrow["{q_j}", from=1-1, to=1-3]
    \arrow["\circ"{description}, hook, from=1-1, to=3-1]
    \arrow["\circ"{description}, hook, from=1-3, to=3-3]
    \arrow["\shortmid"{marking}, hook, from=3-1, to=5-1]
    \arrow["\shortmid"{marking}, hook, from=3-3, to=5-3]
    \arrow["{q_j}", dashed, from=3-1, to=3-3]
    \arrow["{H^0(\mathcal{Q}_j, \L_j^{s_j})^{\hat{H}_j} \subset H^0(\mathcal{Q}_j, \L_j^{s_j})}"', dashed, from=5-1, to=5-3]
\end{tikzcd}\]
for each $j = 1, \dots, i$, where the top horizontal arrow is a geometric $\hat{H}_j$-quotient, where $\mathcal{Q}_{j+1}$ is the closure of the locally closed subscheme
$$(\mathcal{Q}_j)_{\min}^{\hat{H}_j-vs}/\hat{H}_j \subset \PP((H^0(\mathcal{Q}_j, \L_j^{s_j})^{\hat{H}_j})^{\vee}),$$ 
and where each $\mathcal{Q}_j$ is endowed with the very ample linearisation $\L_j$ obtained by restricting the $\OO(1)$ of $\PP((H^0(\mathcal{Q}_{j-1}, \L_{j-1}^{s_{j-1}})^{\hat{H}_{j-1}})^{\vee})$ to $\mathcal{Q}_j$ and then twisting by a suitable character $\epsilon_j \chi_j$ of $\hat{H}_j$ to ensure adaptedness. 

As in Construction \ref{construction: quotienting in stages}, the very ample invertible sheaf $\L_{i+1}$ on the projective scheme $\mathcal{Q}_{i+1}$ carries a linearisation for a residual action of $P/\hat{H}^{(i)}$, and in particular for $\hat{H}_{i+1}$. By twisting $\L_{i+1}$ by a character of $\hat{H}_{i+1}$ of the form $\epsilon_{i+1} \chi_{i+1}$ if necessary, we may assume that this $\hat{H}_{i+1}$-linearisation is adapted.

For the induction step, we need to show that Theorem \ref{theorem: U-hat in our setting} can be applied to this linearised action of $\hat{H}_{i+1}$. In order to establish this, we introduce the following notation:

\begin{enumerate}[label=(\roman*)]
    \item For $j = 1, \dots, i$, set
    $$ q_{(j)} := q_j \circ q_{j-1} \circ \cdots \circ q_1 : \mathcal{Q}_1 \dashrightarrow \mathcal{Q}_{j+1}. $$
    \item We inductively define open subschemes $\mathcal{Q}_{(j)} \subset \mathcal{Q}_1$ for $j = 1, \dots, i+1$ by setting $\mathcal{Q}_{(1)} := (\mathcal{Q}_1)_{\min}^{\hat{H}_1-vs}$ (as before) and setting for $j > 1$
    $$ \mathcal{Q}_{(j)} := \mathcal{Q}_{(j-1)} \cap q_{(j-1)}^{-1}( (\mathcal{Q}_j)_{\min}^{\hat{H}_j-vs} ), $$
    so that for each $j$, the morphism $q_{(j)} : \mathcal{Q}_{(j)} \to q_{(j)}(\mathcal{Q}_{(j)}) = \mathcal{Q}_{(j)}/\hat{H}^{(j)} \subset \mathcal{Q}_{j+1}$ is a well-defined geometric $\hat{H}^{(j)}$-quotient.
    \item For all $k = 1, \dots, i+1$ and for all $k \leq j < \ell = n+1$, let $\mathcal{Q}_k^{[j]} = \mathcal{Q}_k(\lambda_k^{[j]})_{\min}$, let $Z_k^{[j]} = Z(\mathcal{Q}_k, \lambda_k^{[j]})_{\min}$, and let $p_k^{[j]} : \mathcal{Q}_k^{[j]} \to Z_k^{[j]}$ be the retraction under $\lambda_k^{[j]}$. As a special case of the above, we continue to denote $p^{[j]} = p_1^{[j]}$ for the $\lambda_1^{[j]} = \lambda^{[j]}$ retraction on $\mathcal{Q}_1$.
    \item Set $\mathcal{Q}_{(0)} := \mathcal{Q}_1$ and $q_{(0)} := \mathrm{id}_{\mathcal{Q}_1}$.
\end{enumerate}

\begin{lemma} \label{lemma: qis induction step}
    For each $j = 0, 1, \dots, i$, the following statements hold:
    \begin{enumerate}
        \item $x \in \mathcal{Q}_{(j+1)}$, and for all $k \geq j + 1$ we have $q_{(j)}(x) \in \mathcal{Q}_{j+1}^{[k]}$;
        \item for all $j + 1 < k \leq n$, we have $p^{[k]}(x) = p_1^{[k]}(x) \in \mathcal{Q}_{(j+1)}$; and 
        \item for all $j+1 \leq k \leq n$, $q_{(j)}(p^{[k]}(x)) \in Z_{j+1}^{[k]}$.
    \end{enumerate}
\end{lemma}

\begin{proof}
    We argue by induction on $j$, noting that the base case is Lemma \ref{lemma: qis base step}. Assume that the assertions of Lemma \ref{lemma: qis induction step} holds for each $j = 0, \dots, r-1$, where $r \leq i$. We first show that for each $k \geq r+1$, the point $q_{(r)}(p^{[k]}(x)) = p_{r+1}^{[k]}(q_{(r)}(x))$ is contained in $Z_{r+1}^{[k]}$. For each $k \geq r+1$, the induction hypothesis implies that $q_{(r-1)}(p^{[k]}(x)) = p_r^{[k]}(q_{(r-1)}(x)) \in Z_{r}^{[k]} \cap (\mathcal{Q}_r)_{\min}^{\hat{H}_r-vs}$. In particular, we have $Z_{r}^{[k]} \cap (\mathcal{Q}_r)_{\min}^{\hat{H}_r-vs} \neq \emptyset$. From Construction \ref{construction: quotienting in stages} the map $q_r : \mathcal{Q}_r \dashrightarrow \mathcal{Q}_{r+1}$ is given by the projection corresponding to the inclusion $B := H^0(\mathcal{Q}_r, \L_r^{s_r})^{\hat{H}_r} \subset A := H^0(\mathcal{Q}_r, \L_r^{s_r})$, and the twist of $\L_{r+1}$ by the character $-\epsilon_{r+1} \chi_{r+1}$ pulls back under $q_r$ to give $\L_r^{s_r}$. As such, the arguments given in the proof of \cite[Lemma 5.4]{hoskinsjackson} carry over to the map $q_r : \mathcal{Q}_r \dashrightarrow \mathcal{Q}_{r+1}$. Namely, that $Z_{r}^{[k]} \cap (\mathcal{Q}_r)_{\min}^{\hat{H}_r-vs} \neq \emptyset$ implies that the maximal weights for $\lambda_r^{[k]}(\G_m)$ acting on both $A$ and $B$ coincide, so we may choose a basis $\mathcal{B}_A$ for $A$ of the form
    $$ \mathcal{B}_A = \{ \alpha_{\iota}, \alpha_{\kappa}', \beta_{\mu}, \beta_{\nu}' \}, $$
    where the $\alpha_{\iota}, \alpha_{\kappa}'$ are a basis for $B$ and the $\alpha_{\iota}, \beta_{\mu}$ are a basis for the maximal weight space for $\lambda_r^{[k]}(\G_m)$ in $A$. With respect to this basis, the morphism $q_r$ corresponds to projecting onto the $\alpha_{\iota}, \alpha_{\kappa}'$-coordinates, and the retraction $p_r^{[k]}$ on $\mathcal{Q}_r$ corresponds to projecting onto the $\alpha_{\iota}, \beta_{\mu}$-coordinates. Setting $z := q_{(r-1)}(p^{[k]}(x))$, since $z \in Z_r^{[k]}$ then all $\alpha_{\kappa}', \beta_{\nu}'$-coordinates of $z$ vanish, and since $z \in (\mathcal{Q}_r)_{\min}^{\hat{H}_r-vs}$ then at least one $\alpha_{\iota}$-coordinate of $z$ does not vanish. Applying $q_r$ to $z$ (that is, projecting onto the $\alpha_{\iota}$-coordinates), we obtain that the point $q_r(z) = q_{(r)}(p^{[k]}(x)) \in \mathcal{Q}_{r+1}$ is fixed by $\lambda_{r+1}^{[k]}$ and has all coordinates in the maximal weight space, whence $z \in Z_{r+1}^{[k]}$. Consequently, we find that for all $k \geq r+1$,
    $$ p_{r+1}^{[k]}(q_{(r)}(x)) = q_{(r)}(p^{[k]}(x)) = q_r(q_{(r-1)}(p^{[k]}(x))) \in Z_{r+1}^{[k]}. $$
    In turn this implies that $q_{(r)}(x) \in \mathcal{Q}_{r+1}^{[k]}$ for all $k \geq r+1$.
    
    Let us now show that the points $x, p^{[r+2]}(x), \dots, p^{[n]}(x)$ lie in $\mathcal{Q}_{(r+1)}$, that is their images under $q_{(r)}$ lie in $(\mathcal{Q}_{r+1})_{\min}^{\hat{H}_{r+1}-vs}$. Note that the semisimple part $R_{r+1}$ of the Levi subgroup $R_{r+1}$ of $\hat{H}_{r+1}$ is trivial, so $R_{r+1}$-stability is vacuous. As such, it suffices to establish the following for any point $y \in \{q_{(r)}(x), q_{(r)}(p^{[r+2]}(x)), \dots, q_{(r)}(p^{[n]}(x))\}$:
    \begin{enumerate}[label=(\roman*)]
        \item $p_{r+1}^{[r+1]}(y) \in Z_{r+1}^{[r+1]}$.
        \item $p_{r+1}^{[r+1]}(y)$ has trivial $U_{r+1}$-stabiliser.
        \item $y$ is not contained in the $U_{r+1}$-sweep of $Z_{r+1}^{[r+1]}$.
    \end{enumerate}
    By Lemma \ref{lemma: limits of nice configurations} we have for all such points $y$
    $$ p_{r+1}^{[r+1]}(y) = p_{r+1}^{[r+1]}(q_{(r)}(x)) = q_{(r)}(p^{[r+1]}(x)), $$
    which is represented by the $\hat{H}^{(r)}$-orbit of the configuration
    $$ p^{[r+1]}(p) = (X^0, \dots, X^r, J(X^r, \PP(V^{r+1, r+1})), \dots, J(X^r, \PP(V^{r+1,n}))).$$
    of subschemes of $\PP(V)$. We have already established that $q_{(r)}(p^{[r+1]}(x))$ is in $Z_{r+1}^{[r+1]}$. Combining Lemma \ref{lemma: stabilisers of limits} with \cite[Lemma 5.12]{hoskinsjackson}, the triviality of $\stab_{U^{[r+1]}}(p^{[r+1]}(p))$ implies that $q_{(r)}(p^{[r+1]}(x))$ has trivial $U_{r+1}$-stabiliser. Any point in $q_{(r)}(\mathcal{Q}_{(r)}) \cap U_{r+1} Z_{r+1}^{[r+1]}$ is fixed under the retraction $p_{r+1}^{[r+1]}$. However none of the points $y$ are fixed by $p_{r+1}^{[r+1]}$, as they are all represented by configurations whose dimension $(r+1)$ component is the non-singular variety $X^{r+1}$, whereas $p_{r+1}^{[r+1]}(y)$ is represented in dimension $(r+1)$ by the singular variety $J(X^r, \PP(V^{r+1,r+1}))$. 
    
    Consequently none of these points $y$ can lie in $U_{r+1} Z_{r+1}^{[r+1]}$. It follows that each $y \in \{q_{(r)}(x), q_{(r)}(p^{[r+2]}(x)), \dots, q_{(r)}(p^{[n]}(x))\}$ lies in $(\mathcal{Q}_{r+1})_{\min}^{\hat{H}_{r+1}-vs}$. This concludes the induction step and in turn concludes the proof of the lemma.
\end{proof}

\begin{cor}
    For the action $\hat{H}_{i+1} \circlearrowright \mathcal{Q}_{i+1}$ linearised by $\L_{i+1}$, Theorem \ref{theorem: U-hat in our setting} is applicable.
\end{cor}

\begin{proof}
    This immediately follows from Lemma \ref{lemma: qis induction step}.
\end{proof}

Arguing by induction on $i$, it follows that Construction \ref{construction: quotienting in stages} can be carried out in full, yielding a diagram of the form
\[\begin{tikzcd}[ampersand replacement=\&,cramped]
    {\mathcal{Q}_1} \&\& {\mathcal{Q}_2} \&\& \cdots \&\& {\mathcal{Q}_n} \&\& {\mathcal{Q}_{n+1}} \\
    \\
    {(\mathcal{Q}_1)_{\min}^{\hat{H}_1-vs}} \&\& {(\mathcal{Q}_2)_{\min}^{\hat{H}_2-vs}} \&\& \cdots \&\& {(\mathcal{Q}_n)_{\min}^{\hat{H}_n-vs}}
    \arrow["\circ"{description}, hook', from=3-1, to=1-1]
    \arrow["\circ"{description}, hook', from=3-3, to=1-3]
    \arrow["\circ"{description}, hook', from=3-7, to=1-7]
    \arrow["{q_1}", dashed, from=1-1, to=1-3]
    \arrow["{q_2}", dashed, from=1-3, to=1-5]
    \arrow["{q_{n-1}}", dashed, from=1-5, to=1-7]
    \arrow["{q_n}", dashed, from=1-7, to=1-9]
    \arrow["{q_n}"', from=3-7, to=1-9]
    \arrow["{q_2}"', from=3-3, to=1-5]
    \arrow["{q_1}"', from=3-1, to=1-3]
\end{tikzcd}\]
where for each $i$ the restriction $q_i : (\mathcal{Q}_i)_{\min}^{\hat{H}_i-vs} \to q_i((\mathcal{Q}_i)_{\min}^{\hat{H}_i-vs}) = (\mathcal{Q}_i)_{\min}^{\hat{H}_i-vs}/\hat{H}_i$ is a geometric $\hat{H}_i$-quotient. With
$$ q_{(n)} := q_n \circ q_{n-1} \circ \cdots \circ q_1 : \mathcal{Q}_1 \dashrightarrow \mathcal{Q}_{n+1} $$
and
$$ \mathcal{Q}_{(n)} := \mathcal{Q}_{(n-1)} \cap q_{(n-1)}^{-1}((\mathcal{Q}_n)_{\min}^{\hat{H}_n-vs}), $$
the morphism $q_{(n)} : \mathcal{Q}_{(n)} \to q_{(n)}(\mathcal{Q}_{(n)}) \subset \mathcal{Q}_{n+1}$ is a well-defined geometric $\hat{H}^{(n)}$-quotient of $\mathcal{Q}_{(n)}$. If $x$ is a point of $\mathcal{S}_0' \equiv \Psi(\mathcal{S}_0')$, by iterating Lemma \ref{lemma: qis induction step} one has $x \in \mathcal{Q}_{(n)}$.

\subsection{Completing the Proof}

By tying everything together, we complete the proof of Theorem \ref{theorem: main theorem}.

\begin{proof}[Proof of Theorem \ref{theorem: main theorem}]
	By Corollary \ref{cor: parameter space to be quotiented}, the moduli functor $\mathcal{F}_{n,d,\underline{\Phi}}^{\PP(V)}$ admits a coarse moduli space if and only if the scheme $\mathcal{S}'$ admits a categorical quotient for the action of $SL(V)$ which is an orbit space morphism. From Section \ref{section: reduction to P}, this is equivalent to $\mathcal{S}_0' \equiv \Psi(\mathcal{S}_0')$ admitting a categorical quotient for the action of $P$ which is an orbit space morphism. 
			
    The scheme $\mathcal{Q}_{(n)}$ admits a quasi-projective geometric $\widehat{H}^{(n)}$-quotient. However, using the notation of Section \ref{section: parabolic subgroups of SL(V)} there are surjections $R \to L/T$ and $H \to P/T$, both with finite kernels. The residual actions of these finite kernals are not a problem, since the action of a finite group on a quasi-projective scheme always admits a quasi-projective geometric quotient (see for instance \cite[Paragraph after Remark 2.1.1.1]{schmitt}). As such, the scheme $\mathcal{Q}_{(n)}$ admits a quasi-projective geometric $P$-quotient.
    
    There is an inclusion $\Psi(\mathcal{S}_0') \subset \mathcal{Q}_{(n)}$ of open subschemes of $\mathcal{Q}_{(1)}$. A geometric $P$-quotient of $\mathcal{S}_0'$ can then be obtained by restricting the geometric $P$-quotient $q_{(n)} : \mathcal{Q}_{(n)} \to q_{(n)}(\mathcal{Q}_{(n)}) \subset \mathcal{Q}_{n+1}$, by applying Lemma \ref{lemma: restricting geometric quotients locally trivial} to the $U_i$-quotients (which are all Zariski-locally trivial) and Lemma \ref{lemma: restricting geometric quotients reductive} to the quotients by the $\lambda_i(\G_m)$ and by $R_1 = SL(W)$ (which all have the property of being geometric quotients). In particular, $\mathcal{S}_0' \equiv \Psi(\mathcal{S}_0')$ admits a categorical quotient for the action of $P$ which is an orbit space morphism. This completes the proof of Theorem \ref{theorem: main theorem}.
\end{proof}

\section{Extending the Main Result} \label{section: extending the main result}

We conclude by briefly considering a couple of ways in which Theorem \ref{theorem: main theorem} can be extended.

\subsection{Weighting the Points}

The construction presented above treats $X^0$ as an unordered collection of points in the linear subspace $Z^0 \subset \PP(V)$. It is possible to consider a modified setup, where we instead consider $X^0$ to be an \emph{ordered} collection of $d$ points in $Z^0$, by replacing the Hilbert scheme $\mathrm{Hilb}(\PP(V), d)$ of $0$-dimensional length $d$ subschemes of $\PP(V)$ with the $d$-fold product $\PP(V)^{\times d}$. In place of the Chow linearisation $\LL_{\mathrm{Ch}_0}$, one may take the linearisation
$$ \OO_{\PP(V)}(\underline{w}) := \boxtimes_{i=1}^d \OO_{\PP(V)}(w_i), \quad \underline{w} = (w_1, \dots, w_d) \in (\Z^{>0})^d. $$

With respect to the induced action of $SL(W)$ arising from a given decomposition $V = W \oplus W'$, it follows from \cite[Theorem 11.1]{dolgachevbook} that if $p = (p_1, \dots, p_d) \in \PP(W)^{\times n} \subset \PP(V)^{\times n}$ then $p$ is $SL(W)$-(semi)stable with respect to the linearisation $\OO_{\PP(V)}(\underline{w})$ if and only if for all proper linear subspaces $Z \subset \PP(W)$,
\begin{equation} \label{eqn: weighted point inequality}
    \frac{\sum_{p_i \in Z} w_i}{\sum_{i=1}^d w_i} < (\leq) \ \frac{\dim Z + 1}{\dim W}
\end{equation}
Applying the same quotienting-in-stages construction used to prove Theorem \ref{theorem: main theorem} with this modified setup, one obtains (for each $\underline{w} \in (\Z^{\geq 0})^d$) a coarse moduli space parametrising all non-degenerate, non-singular and stable hyperplanar admissible flags $(\underline{X}, \underline{Z})$ together with a labelling of the points of $X^0$, where the appropriate notion of stability is given by requiring that the labelled points in $X^0 \subset \PP(Z^0)$ satisfy Inequality \ref{eqn: weighted point inequality} strictly.

\subsection{Omitting the Points} \label{section: omitting points}

Another variant of Theorem \ref{theorem: main theorem} can be obtained in which the flags being parametrised are of the form
\begin{equation} \label{eqn: flag starting from a curve}
    X^1 \subset \cdots \subset X^{n-1} \subset X^n, \quad Z^1 \subset \cdots \subset Z^{n-1} \subset Z^n = V
\end{equation}
where $X^1 \subset \PP(Z^1)$ is a smooth, non-degenerate connected projective curve which is now required to be GIT stable with respect to the Chow linearisation on $\mathrm{Hilb}(\PP(Z^1), \Phi_1)$ (and all other subvarieties $X^i \subset \PP(Z^i)$ are non-degenerate, smooth and connected). As above, the construction proceeds along very similar lines to the quotienting-in-stages construction used to prove Theorem \ref{theorem: main theorem}; the locus $\mathcal{S}'$ being quotiented is taken to be the analogously-defined locally closed subscheme of $\prod_{i=1}^n \mathcal{H}_i \times \prod_{j=1}^n \mathrm{Gr}_j$, and the group $P$ is taken to be the parabolic subgroup of $SL(V)$ preserving the flag $Z^1 \subset \cdots \subset Z^n = V$. In the base step of the quotienting-in-stages procedure (cf. Lemma \ref{lemma: qis base step}), the GIT stability of $X^1 \subset \PP(Z^1)$ is used to argue that the points $p^{[1]}(y)$, $y \in \{ x, p^{[2]}(x), \dots, p^{[n-1]}(x) \}$ are all stable with respect to the reductive linear algebraic group $R_1$; otherwise the quotienting-in-stages construction proceeds in the same manner as the preceding section.

From the results of \cite{bfmv}, it is known that if $g \geq 2$ and if $d > 2(2g-2)$ then smooth non-degenerate connected projective curves in $\PP^{d-g}$ of degree $d$ and genus $g$ are Chow stable; in particular, the Chow stable locus of the corresponding Hilbert scheme of curves is non-empty. Imposing the requirement that the degree and genus of $X^1$ satisfy these constraints imposes constraints on the higher-dimensional $X^i$ fitting into a flag of the form \eqref{eqn: flag starting from a curve}. Indeed, fixing $k \in \{2, \dots, n\}$, it follows from the Hirzebruch--Riemann--Roch theorem (as applied in \cite[Example 18.3.5]{fultonintersection}) that if $H \subset X^k$ is the hyperplane class then
\begin{align*}
    1 - g &= \chi(X^1, \OO_{X^1}) \\ 
    &= \int_{X^k} \left( \prod_{i=1}^{k-1} (1 - \exp(-H)) \cap \mathrm{Td}(X^k) \right) \\
    &= \int_{X^k} \left( H^{k-1} \cdot \left( -\frac{K_{X^k}}{2} \right) - \frac{k-1}{2} H^k \right) \\
    &= -\frac{k-1}{2} \cdot d - \frac{K_{X^k} \cdot X^1}{2}.
\end{align*}
Here $\mathrm{Td}(X^k) = 1 - \frac{1}{2} K_{X^k} + \cdots$ is the Todd class of $X^k$. The inequality $d > 2(2g-2)$ implies that if $g \geq 2$ then
$$ K_{X^k} \cdot X^1 < -(2k-3)(2g-2) \leq 0. $$
In particular, each canonical divisor $K_{X^k}$ cannot be nef or numerically trivial.

For essentially the same reasons as in the case $k = 1$, it is also possible to obtain a variant of Theorem \ref{theorem: main theorem} to construct for each $k \geq 1$ a coarse moduli space parametrising projective equivalence classes of flags of the form
$$ X^k \subset \cdots \subset X^{n-1} \subset X^n, \quad Z^k \subset \cdots \subset Z^{n-1} \subset Z^n = V $$
where this time $X^k \subset \PP(Z^k)$ is a smooth, non-degenerate, \emph{Chow stable} subvariety of $\PP(Z^k)$.

\bibliographystyle{acm}
\bibliography{admrefs.bib}

\end{document}